\definecolor{dred}{rgb}{0,0,0.6}
\newtheorem{thm}{Theorem}[section]
\newtheorem{cor}[thm]{Corollary}
\newtheorem{lem}[thm]{Lemma}
\theoremstyle{definition}
\theoremstyle{remark}
\newtheorem{rem}[thm]{\bf{Remark}}
\numberwithin{equation}{section}
\newcommand{\beas}{\begin{eqnarray*}}
\newcommand{\eeas}{\end{eqnarray*}}
\newcommand{\bes} {\begin{equation*}}
\newcommand{\ees} {\end{equation*}}
\newcommand{\be} {\begin{equation}}
\newcommand{\ee} {\end{equation}}
\newcommand{\bea} {\begin{eqnarray}}
\newcommand{\eea} {\end{eqnarray}}
\newcommand{\ra} {\rightarrow}
\newcommand{\txt} {\textmd}
\newcommand{\R}{\mathbb R}
\newcommand{\C}{\mathbb C}
\newcommand{\N}{\mathbb N}
\newcommand{\la}{\lambda}
\newcommand{\g}{\mathfrak{g}}
\begin{document}

\title[Extension problem and Hardy's inequality]{An extension problem and Hardy's inequality for the fractional Laplace-Beltrami operator on Riemannian symmetric spaces of noncompact type}

\author{Mithun Bhowmik and Sanjoy Pusti}

\address{(Mithun Bhowmik) Department of Mathematics, Indian Institute of Science, Bangalore-560012, India}
\email{mithunb@iisc.ac.in, mithunbhowmik123@gamil.com}

\address{(Sanjoy Pusti) Department of Mathematics, IIT Bombay, Powai, Mumbai-400076, India}
\email{sanjoy@math.iitb.ac.in}



\begin{abstract}
In this paper we study an extension problem for the Laplace-Beltrami operator on Riemannian symmetric spaces of noncompact type and use the solution to prove Hardy-type  inequalities for fractional powers of the Laplace-Beltrami operator. Next, we study the mapping properties of the extension operator. In the last part we prove Poincar\'e-Sobolev inequalities on these spaces.
\end{abstract}

\subjclass[2010]{Primary 43A85; Secondary 26A33, 22E30}

\keywords{Hardy's inequality, fractional Laplacian, extension problem, Riemannian symmetric spaces}

\maketitle


\section{Introduction}
In recent years there has been intensive research on various kinds of inequalities for fractional order operators because of their applications to many areas of analysis (see for instance \cite{BFL, FLS, ST} and the references therein). The classical definitions of the fractional operator in terms of the Fourier analysis involve functional analysis and singular integrals. They are nonlocal objects. This fact does not allow to apply local PDE techniques to treat
nonlinear problems for the fractional operators. To overcome this difficulty, in the Euclidean case, Caffarelli and Silvestre \cite{Caffa} studied the extension problem associated to the Laplacian and realised the fractional power as the map taking Dirichlet data to the Neumann data. On a certain class of noncompact manifolds, this definition of the fractional Laplacian through an extension problem has been studied by Banika et al. \cite{BGS}.

In the first part of this article we will concern with the Hardy-type inequalities for the fractional operators. Let $\Delta_{\R^n} =\sum_{j=1}^n\frac{\partial^2}{\partial x_j^2}$
denote the Euclidean Laplacian on $\R^n$. For $0<s< n/2$ and $f\in C_c^\infty(\R^n)$, the Hardy's inequality for fractional powers of the Laplacian states the following
\be \label{fhrn}
\int_{\R^n}\frac{|f(x)|^2}{|x|^{2s}}\,dx\leq 4^{-s}\frac{\Gamma\left(\frac{n-2s}{4}\right)^2}{\Gamma\left(\frac{n+2s}{4}\right)^2} \langle (-\Delta_{\R^n})^s f, f\rangle.
\ee
This is a generalization of the original Hardy's inequality proved for the gradient $\nabla_{\R^n}$ of $f$: for $n \geq 3$,
\be\label{eqn-hardy}
\frac{(n-2)^2}{4} \int_{\R^n} \frac{|f(x)|^2}{|x|^2} dx \leq \int_{\R^n} |\nabla_{\R^n} f(x)|^2~ dx, \:\: \textit{ for } f\in C_c(\R^n).
\ee
The constant appearing in the equation (\ref{fhrn}) is sharp \cite{B, Hr, Y}. It is also known that the equality is
not obtained in the class of functions for which both sides of the inequality (\ref{fhrn}) are finite. Using a ground state representation, Frank, Lieb, and Seiringer gave a different proof of the inequality (\ref{fhrn})
when $0 < s < \min\{1, n/2\}$ which improved the previous results \cite{FLS}. There is another version of Hardy's inequality where the homogeneous weight function $|x|^{-2s}$ is replaced by non-homogeneous one:
\be \label{fhrn1}
\int_{\R^n}\frac{|f(x)|^2}{(\delta^2+|x|^2)^{2s}}~dx\leq 4^{-s}\frac{\Gamma\left(\frac{n-2s}{4}\right)}{\Gamma\left(\frac{n+2s}{4}\right)} ~\delta^{-2s}~\langle (-\Delta_{\R^n})^s f, f\rangle, \:\: \delta>0.
\ee
Here also the constant is sharp and equality is achieved for the functions $(\delta^2 + |x|^2)^{-(n-2s)/2}$ and their translates \cite{BRT}.

Generalization of the classical Hardy's inequality (\ref{eqn-hardy}) to Riemannian
manifolds was intensively pursued after the seminal work of Carron \cite{C}, see for instance \cite{BGG, DD, KO, KO1, K, YSK}. In \cite{C}, the following weighted Hardy's inequality was obtained on a complete noncompact Riemannian manifold $M$:
\bes
\int_{M} \eta^\alpha |\nabla_{g} \phi|^2~dv_g \geq \left(\frac{C+\alpha-1}{2}\right)^2\int_{M} \eta^\alpha \frac{\phi^2}{\eta^2}~dv_g,
\ees 
where $\phi\in C_c^\infty(M- \eta^{-1}\{0\}), ~ \alpha\in \R, ~C>1,~ C+\alpha-1>0$ and the weight
function $\eta$ satisfies $|\nabla_M \eta| = 1$ and $|\Delta_M \eta| \geq C/\eta$ in the sense of distribution. Here $\nabla_g, dv_g$ denote respectively the Riemannian gradient and Riemannian measure on $M$. In the case of Cartan-Hadamard manifold $M$ of dimension $N$ (namely, a manifold which is complete, simply-connected, and has everywhere non-positive sectional curvature), the geodesic distance function $d(x, x_0)$, where $x_0 \in M$, satisfies all the assumptions of the weight $\eta$ and the above inequality holds with the best constant $(N-2)^2/4$, see \cite{K}. Analogues of Hardy-type inequalities for fractional powers of the sublaplacian are also known, for instance, the work by P. Ciatti, M. Cowling and F. Ricci for stratified Lie groups \cite{CCR}. There the authors have not paid attention to the sharpness of the constants. Recently, in \cite{RT2}, Roncal and Thangavelu have proved analogues of Hardy-type inequalities with  sharp constants for fractional powers of the sublaplacian on the Heisenberg group. For recent results on the Hardy-type inequalities for the fractional operators we refer \cite{BRT, RT1, RzS}.

Our first aim in this article is to prove analogues of Hardy's inequalities (\ref{fhrn}) and (\ref{fhrn1}) for fractional powers of the Laplace-Beltrami operator $\Delta$ on Riemannian symmetric space $X$ of noncompact type. We have the following analogue of Hardy's inequality in the non-homogeneous case.
\begin{thm} \label{Hardy-inhomogeneous}
Let $0<\sigma<1$ and $y>0$. Then there exists a constant $C_\sigma>0$ such that for $F\in H^\sigma(X)$
\bes
\left\langle (-\Delta)^\sigma F, F\right\rangle \geq   C_\sigma\, y^{2\sigma} \left( \int_{\left\{x:|x|^2+y^2<1\right\}} \frac{|F(x)|^2}{(y^2 +|x|^2)^{2\sigma}}\,dx + \int_{\left\{x:|x|^2+y^2\geq 1\right\}} \frac{|F(x)|^2}{(y^2 +|x|^2)^{\sigma}}\,dx\right).
\ees
\end{thm}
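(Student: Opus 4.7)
The strategy is to invoke the Caffarelli--Silvestre type extension problem for $(-\Delta)^\sigma$ on $X$ developed earlier in the paper. Let $u(x,t)$ denote its solution on $X\times(0,\infty)$ with boundary data $u(x,0)=F(x)$; the proof depends on the trace identity
\begin{equation*}
\langle (-\Delta)^\sigma F, F\rangle = c_\sigma \int_0^\infty\!\!\int_X t^{1-2\sigma}\bigl(|\nabla_X u|^2 + |\partial_t u|^2 + \text{(mass terms)}\bigr)\,dx\,dt,
\end{equation*}
which expresses the fractional energy of $F$ as a weighted Dirichlet energy of the extension.

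For the near region $\{x:|x|^2+y^2<1\}$ (nonempty only when $y<1$), I would use the fundamental theorem of calculus in the extension variable, writing $F(x)=u(x,y)-\int_0^y \partial_t u(x,t)\,dt$. Applying Cauchy--Schwarz with the weight $t^{1-2\sigma}$ yields the pointwise bound
\begin{equation*}
|F(x)|^2 \leq 2|u(x,y)|^2 + \frac{y^{2\sigma}}{\sigma}\int_0^y t^{1-2\sigma}|\partial_t u(x,t)|^2\,dt.
\end{equation*}
Multiplying by $y^{2\sigma}(y^2+|x|^2)^{-2\sigma}$ and using the elementary inequality $y^{4\sigma}(y^2+|x|^2)^{-2\sigma}\leq 1$ absorbs the second term on the right into the extension energy after integration in $x$ and $t$. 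The boundary-slice contribution $y^{2\sigma}\int|u(x,y)|^2(y^2+|x|^2)^{-2\sigma}\,dx$ must then be controlled via the explicit Poisson-type kernel of the extension operator on $X$, together with the Plancherel formula on $X$, so as to be dominated by the full fractional energy.

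For the far region $\{x:|x|^2+y^2\geq 1\}$, the weaker weight satisfies $y^{2\sigma}(y^2+|x|^2)^{-\sigma}\leq 1$, so the associated integral is bounded by $\|F\|_{L^2(X)}^2$, which in turn is controlled by the spectral gap of $\Delta$ on $X$: since the $L^2$-spectrum of $-\Delta$ is contained in $[|\rho|^2,\infty)$, functional calculus gives $\langle (-\Delta)^\sigma F, F\rangle \geq |\rho|^{2\sigma}\|F\|_{L^2(X)}^2$. The principal obstacle will be the near-region boundary-slice bound on $\|u(\cdot,y)\|_{L^2(X)}$: a direct tail Cauchy--Schwarz on $(y,\infty)$ fails because $t^{2\sigma-1}$ is not integrable at infinity, and obtaining the correct $y$-scaling with a constant uniform in $y>0$ is the technical heart of the argument, requiring the spectral representation of $u$ provided by the extension framework in the earlier sections.
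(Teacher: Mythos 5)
Your far-region bound is correct, and is in fact simpler than the paper's treatment of that piece: since $y^{2\sigma}(y^{2}+|x|^{2})^{-\sigma}\leq 1$ there, the spectral gap $\langle(-\Delta)^{\sigma}F,F\rangle\geq|\rho|^{2\sigma}\|F\|_{L^{2}(X)}^{2}$ settles it at once. The problem is the near region, which is the entire content of the theorem, and there your argument has a genuine gap. After the fundamental theorem of calculus and Cauchy--Schwarz you are left with the boundary-slice term
\begin{equation*}
y^{2\sigma}\int_{\{|x|^{2}+y^{2}<1\}}\frac{|u(x,y)|^{2}}{(y^{2}+|x|^{2})^{2\sigma}}\,dx,
\end{equation*}
and this is not a lower-order remainder: $u(\cdot,y)\to F$ as $y\to 0^{+}$ and the constant must be uniform in $y$, so this term carries exactly the same strength as the quantity you are trying to bound. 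Deferring its control to ``the Poisson kernel and Plancherel'' is therefore not a technical afterthought but amounts to re-proving the theorem; no absorption into the extension energy is available, and you give no argument for it. (The weighted trace identity you start from is also only asserted, though it does follow from the Plancherel theorem on $X$ together with the ODE analysis that appears in the proof of Theorem \ref{mapping-property}.)

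The paper avoids the slice term altogether by using a ground state representation in the spirit of Frank--Lieb--Seiringer rather than the extension energy. The key inputs are Lemma \ref{lem-I}, which exhibits $P_{y}^{-\sigma}$ as a ground state in the sense that $(-\Delta)^{\sigma}P_{y}^{-\sigma}=\frac{4^{\sigma}\Gamma(\sigma)}{y^{2\sigma}\Gamma(-\sigma)}P_{y}^{\sigma}$, and the quadratic-form identity of Lemma \ref{lem-int2}; these combine in Theorem \ref{thmgsr} to show that
\begin{equation*}
\langle(-\Delta)^{\sigma}F,F\rangle-\frac{4^{\sigma}\Gamma(\sigma)}{y^{2\sigma}\Gamma(-\sigma)}\int_{X}|F(x)|^{2}\,\frac{P_{y}^{\sigma}(x)}{P_{y}^{-\sigma}(x)}\,dx
\end{equation*}
equals a manifestly nonnegative double integral (Corollary \ref{Hardy-inhomogeneous-1}). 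The two-regime asymptotics of $P_{y}^{\pm\sigma}$ in Theorem \ref{est-Pysigma} then identify $\frac{\Gamma(\sigma)}{\Gamma(-\sigma)}\frac{P_{y}^{\sigma}}{P_{y}^{-\sigma}}$ with $y^{4\sigma}(y^{2}+|x|^{2})^{-2\sigma}$ when $|x|^{2}+y^{2}<1$ and with $y^{4\sigma}(y^{2}+|x|^{2})^{-\sigma}$ otherwise, which is precisely the claimed weight. If you wish to remain inside the extension picture, the workable route is the trace-Hardy substitution $u=\varphi v$ with $\varphi$ a positive solution of the extension equation, followed by integration by parts, which reproduces the same ground-state weight; the FTC-plus-Cauchy--Schwarz route does not close.
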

\begin{rem}
In contrast with the inequality (\ref{fhrn1}) for the Euclidean space, we get an improvement in the theorem above. This comes as a consequence of the geometry of the symmetric space. In the following theorem also we get similar improvement.
\end{rem}
For the homogeneous weight function, we prove the following analogue of Hardy's inequality on $X$.
\begin{thm}\label{Hardy-homogeneous}
Let $0<\sigma<1$. Then there exists a constant $C'_\sigma>0$ such that for $F\in C_c^\infty(X)$ 
\bes
\left\langle (-\Delta)^\sigma F, F\right\rangle \geq C'_\sigma  \left( \int_{\left\{x:|x|<1\right\}} \frac{|F(x)|^2}{|x|^{2\sigma}}\,dx + \int_{\left\{x:|x|\geq 1\right\}} \frac{|F(x)|^2}{|x|^{\sigma}}\,dx\right).
\ees
\end{thm}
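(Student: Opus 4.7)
The plan is to derive Theorem~\ref{Hardy-homogeneous} from the non-homogeneous Hardy inequality of Theorem~\ref{Hardy-inhomogeneous}, handling the two pieces on the right-hand side by rather different devices.

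For the exterior piece $\int_{|x|\geq 1}|F|^2/|x|^\sigma\,dx$ I would invoke the spectral gap of $-\Delta$ on $X$: the bottom of its $L^2$-spectrum equals $|\rho|^2>0$, so the spectral theorem yields $\|F\|_2^2\leq|\rho|^{-2\sigma}\langle(-\Delta)^\sigma F,F\rangle$, and since $|x|^{-\sigma}\leq 1$ on $\{|x|\geq 1\}$ this at once bounds the exterior piece. This is exactly the step at which the geometric improvement flagged in the Remark following Theorem~\ref{Hardy-inhomogeneous} enters the picture: a strictly positive spectral gap is a genuinely noncompact-symmetric-space feature, absent in $\R^n$.

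For the interior piece $\int_{|x|<1}|F|^2/|x|^{2\sigma}\,dx$ the key observation is that the weight $y^{2\sigma}/(y^2+|x|^2)^{2\sigma}$ appearing in Theorem~\ref{Hardy-inhomogeneous} is maximized, for each fixed $x$, at $y\sim|x|$, with maximal value of order $|x|^{-2\sigma}$. Since no single $y$ reproduces $|x|^{-2\sigma}$ uniformly in $x$, I would couple $y$ to the spatial scale through a dyadic decomposition. Fix a smooth partition of unity $\{\chi_k\}_{k\geq 0}$ with $\chi_k$ supported in $\{2^{-k-1}<|x|<2^{-k+1}\}$ and $\sum_k\chi_k=1$ on $\{|x|<1\}$, set $F_k=\chi_k F$, and apply Theorem~\ref{Hardy-inhomogeneous} to $F_k$ with the scale-adapted choice $y_k=2^{-k}$. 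Since $y_k^2+|x|^2\sim 2^{-2k}$ on $\mathrm{supp}\,F_k$, one obtains
$$\langle(-\Delta)^\sigma F_k,F_k\rangle\geq C\,2^{2k\sigma}\|F_k\|_2^2\geq C'\int\frac{|F_k(x)|^2}{|x|^{2\sigma}}\,dx,$$
with $C,C'$ independent of $k$. Summing in $k$ and using the finite-overlap bound $\sum_k\chi_k^2\geq c>0$ on $\{|x|<1\}$ would then give the interior piece, provided one can control $\sum_k\langle(-\Delta)^\sigma F_k,F_k\rangle$ by a constant multiple of $\langle(-\Delta)^\sigma F,F\rangle$.

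The principal obstacle is this last almost-orthogonality estimate
$$\sum_k\|(-\Delta)^{\sigma/2}(\chi_k F)\|_2^2\leq C\,\|(-\Delta)^{\sigma/2}F\|_2^2,$$
which is not immediate because $(-\Delta)^{\sigma/2}$ is nonlocal for $\sigma\in(0,1)$. I would establish it by writing $(-\Delta)^{\sigma/2}(\chi_k F)=\chi_k(-\Delta)^{\sigma/2}F+[(-\Delta)^{\sigma/2},\chi_k]F$: the first summand is handled by the finite overlap of the $\chi_k$, and the commutator is controlled using the pointwise formula $[(-\Delta)^{\sigma/2},\chi_k]F(x)=c\int F(y)\bigl(\chi_k(x)-\chi_k(y)\bigr)K(x,y)\,dy$, where $K$ is the Riesz-type kernel of $(-\Delta)^{\sigma/2}$ on $X$. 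The $2^{-k}$-scale Lipschitz regularity and separation of the $\chi_k$, combined with the heat-kernel bounds on $X$ (polynomial on small scales, sharp $e^{-|\rho|^2 t}$ decay on large scales), are what should produce the $\ell^2$-summable commutator bound that closes the argument.
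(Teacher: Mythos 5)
Your treatment of the exterior piece is correct and in fact simpler than the paper's: the spectral gap gives $\langle(-\Delta)^\sigma F,F\rangle\geq|\rho|^{2\sigma}\|F\|_{L^2(X)}^2$ and $|x|^{-\sigma}\leq1$ on $\{x:|x|\geq1\}$, so that term is immediate. The gap is in the interior piece, exactly at the almost-orthogonality estimate $\sum_k\|(-\Delta)^{\sigma/2}(\chi_kF)\|_2^2\leq C\|(-\Delta)^{\sigma/2}F\|_2^2$ that you yourself flag as the principal obstacle: your proposed resolution does not close. Using the integral representation of Lemma \ref{lem-int2}, or equivalently your commutator kernel $(\chi_k(x)-\chi_k(z))K(x,z)$ with $K(x,z)\asymp d(x,z)^{-n-\sigma}$ near the diagonal, the best the $2^{-k}$-scale Lipschitz bound $|\nabla\chi_k|\lesssim 2^k$ can give is $\|[(-\Delta)^{\sigma/2},\chi_k]F\|_2\lesssim 2^{k\sigma}\|F\|_{L^2(\widetilde A_k)}$ plus exponentially decaying tails (the near-diagonal Schur integral is $2^k\int_0^{2^{-k}}r^{-\sigma}\,dr\sim 2^{k\sigma}$, and the off-diagonal one $\int_{r>2^{-k}}r^{-1-\sigma}\,dr\sim 2^{k\sigma}$, with no further gain). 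Squaring and summing in $k$ therefore produces an error term comparable to $\sum_k 2^{2k\sigma}\|F\|_{L^2(\widetilde A_k)}^2\asymp\int_{|x|<1}|F(x)|^2|x|^{-2\sigma}\,dx$ --- the very quantity you are trying to bound --- with a constant inherited from the heat-kernel estimates that you cannot make small. You end up with $A\leq C_1\langle(-\Delta)^\sigma F,F\rangle+C_2A$ and no control on $C_2$, so the argument is circular. This is not an artifact of a crude estimate: dyadic localization of the fractional Sobolev energy onto annuli shrinking to a point is essentially equivalent to the Hardy inequality itself, so any proof along these lines must extract genuine smallness or cancellation in the localization error, which your sketch does not supply.

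For comparison, the paper sidesteps localization entirely by proving a second ground-state representation (Theorem \ref{ground-state-0}): substituting $g=P_0^{-\alpha}$, with $\alpha$ chosen in the window $(2\sigma+n)/4<\alpha<n/2$, into the bilinear identity (\ref{reptn1}) makes the error term a manifestly nonnegative double integral, whence $\langle(-\Delta)^\sigma F,F\rangle\geq\frac{\Gamma(\alpha)}{\Gamma(\alpha-\sigma)}\int_X|F(x)|^2\,P_0^{\sigma-\alpha}(x)/P_0^{-\alpha}(x)\,dx$; Theorem \ref{est-Pysigma-0} then gives $P_0^{\sigma-\alpha}/P_0^{-\alpha}\asymp|x|^{-2\sigma}$ for $|x|<1$ and $\asymp|x|^{-\sigma}$ for $|x|\geq1$ in one stroke. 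To salvage your scheme you would need either an explicit, quantitatively small commutator bound or a structurally different dyadic argument (for instance comparison of annular averages combined with a discrete Hardy inequality); as written, the interior estimate is not proved.
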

Given $\sigma \in (0, 1)$, the fractional Laplacian $(-\Delta_{\R^n})^\sigma$ on $\R^n$ is defined as a pseudo-differential operator by
\bes
\mathcal F\left((-\Delta_{\R^n})^\sigma f\right) (\xi)=|\xi|^{2\sigma} \mathcal F f(\xi), \:\: \xi\in \R^n,
\ees
where $\mathcal Ff$ is the Fourier transform of $f$ given by
\bes
\mathcal Ff(\xi)=(2\pi)^{-n/2}\int_{\R^n} f(x)~e^{-i x\cdot \xi}~dx, \:\: \xi\in \R^n.
\ees
It can also be written as the singular integral
\bes
(-\Delta_{\R^n})^\sigma f (x)= c_{n, \sigma} P.V.\int_{\R^n} \frac{f(x)-f(y)}{|x-y|^{n+2\sigma}}~dy,
\ees
where $c_{n, \sigma}$ is a positive constant. Caffarelli and Silvestre have developed in \cite{Caffa} an equivalent definition of the fractional Laplacian $(-\Delta_{\R^n})^\sigma, \sigma \in (0, 1)$, using an extension problem to the upper half-space $\R^{n+1}_+$. For a function $f: \R^n \ra \R$, consider the solution $u: \R^n \times [0, +\infty) \ra \R$ of the following differential equation
\bea \label{extn-rn}
&&\Delta_{\R^n} u + \frac{(1-2\sigma)}{y}\frac{ \partial u}{\partial y} +\frac{ \partial^2 u}{\partial y^2}=0, \:\: y>0;\\
&&  u(x, 0)=f(x),\:\:\:\: x\in \R^n \nonumber.
\eea
Then the fractional Laplacian of $f$ can be computed as
\bes
(-\Delta_{\R^n})^\sigma f= -2^{2\sigma-1} \frac{\Gamma(\sigma)}{\Gamma(1-\sigma)} \lim_{y\ra 0^+} y^{1-2\sigma}\frac{\partial u}{\partial y}.
\ees
The Poisson kernel for the fractional Laplacian $(-\Delta_{\R^n})^\sigma $
in $\R^n$ is
\bes
K_{\sigma}(x, y)= c_{n, \sigma}\frac{y^{2\sigma}}{(|x|^2+y^2)^{\sigma+\frac{n}{2}}},
\ees
and then $u(x, y) =  f\ast_{\R^n} K_\sigma$. Therefore 
\bes
(-\Delta_{\R^n})^\sigma f= -2^{2\sigma-1} \frac{\Gamma(\sigma)}{\Gamma(1-\sigma)} \lim_{y\ra 0^+} y^{1-2\sigma}\frac{\partial}{\partial y} (f \ast_{\R^n} K_\sigma)(x).
\ees
Later, Stinga and
Torrea \cite{ST} showed that one can define the fractional Laplacian on a domain $\Omega  \subset \R^n$
through the extension (\ref{extn-rn}) using the heat-diffusion semigroup generated by the Laplacian $\Delta_\Omega$ provided that the heat kernel associated with $\Delta_\Omega$ exists
and it satisfies some decay properties. Since the heat kernel on general noncompact manifolds has been extensively studied depending on the underlying geometry, Banica et al. in \cite{BGS} take this approach to define the fractional Laplace-Beltrami operator on some noncompact manifolds which in particular, include the Riemannian symmetric spaces of noncompact type. Let ${\bf d}$ be a Riemannian metric on a Riemannian symmetric space $X$ and $\Delta$ be the corresponding Laplace-Beltrami operator on $X$. Also, let ${\bf g}$ be the product metric on $X \times \R^+$ given by ${\bf g} = {\bf d} + dy^2$. For $\sigma>0$, let $H^\sigma(X)$ denote the Sobolev space on $X$ (defined in Section 2). In \cite[Theorem 1.1]{BGS}, the following result is proved for the Riemannian symmetric space $X$ of noncompact type of arbitrary rank.
\begin{thm}$($Banica; Gon\'zalez; S\'aez$)$
Let $\sigma \in (0, 1)$. Then for any given $f \in H^\sigma(X)$, there exists a unique solution of the extension problem
\bea \label{extn1}
&&\Delta u + \frac{(1-2\sigma)}{y}\frac{ \partial u}{\partial y} +\frac{ \partial^2 u}{\partial y^2}=0, \:\: y>0;\\
&&  u(x, 0)=f(x),\:\:\:\: x\in X \nonumber. 
\eea
Moreover, the fractional Laplace-Beltrami operator on $X$ can be recovered through
\be \label{u-f-reln}
(-\Delta)^\sigma f(x)=-2^{2\sigma-1}\frac{\Gamma(\sigma)}{\Gamma(1-\sigma)}\lim_{y\ra 0^+}y^{1-2\sigma} \frac{\partial u}{\partial y}(x,y).
\ee
\end{thm}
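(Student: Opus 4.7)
The plan is to adopt the heat-semigroup approach pioneered by Stinga and Torrea. Given $f \in H^\sigma(X)$, I would define the candidate solution by a Poisson-type formula built from the heat semigroup $e^{t\Delta}$ on $X$,
\[
u(x,y) := \frac{y^{2\sigma}}{4^\sigma \Gamma(\sigma)} \int_0^\infty e^{-y^2/(4t)}\, (e^{t\Delta} f)(x)\, \frac{dt}{t^{1+\sigma}}, \qquad y > 0.
\]
The first task is to show that this integral converges in $L^2(X)$, uniformly on compact subsets of $(0,\infty)$, and depends smoothly on $y$; this is immediate from the $L^2$-contractivity of $e^{t\Delta}$ together with the rapid decay of the Mehler-type weight $y^{2\sigma} t^{-1-\sigma} e^{-y^2/(4t)}$ at both endpoints of $(0,\infty)$ once $y>0$.

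To verify the extension equation \eqref{extn1}, the key algebraic observation is that the weight $w(y,t) = y^{2\sigma} t^{-1-\sigma} e^{-y^2/(4t)}$ satisfies
\[
w_{yy}(y,t) + \frac{1-2\sigma}{y}\, w_y(y,t) = \partial_t w(y,t).
\]
Since $\partial_t e^{t\Delta} f = \Delta e^{t\Delta} f$, an integration by parts in $t$, whose boundary contributions vanish because of the $e^{-y^2/(4t)}$ factor at $t = 0$ and the weight decay at $t = \infty$, converts the $\Delta u$-term into the required second-order $y$-term. The boundary condition $u(\cdot, 0) = f$ is most transparent after the substitution $s = y^2/(4t)$, which rewrites
\[
u(x,y) = \frac{1}{\Gamma(\sigma)}\int_0^\infty s^{\sigma-1} e^{-s}\, (e^{(y^2/(4s))\Delta} f)(x)\, ds,
\]
whereupon letting $y \to 0^+$ and using strong continuity of the heat semigroup together with $\int_0^\infty s^{\sigma-1} e^{-s}\, ds = \Gamma(\sigma)$ produces $f(x)$.

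For the recovery formula \eqref{u-f-reln}, the cleanest step is the pointwise identity
\[
y^{1-2\sigma}\, \partial_y K(y,t) = -\frac{2}{4^\sigma \Gamma(\sigma)}\, \partial_t\!\bigl[\,e^{-y^2/(4t)} t^{-\sigma}\,\bigr],
\]
where $K(y,t) = \tfrac{y^{2\sigma}}{4^\sigma \Gamma(\sigma)} t^{-1-\sigma} e^{-y^2/(4t)}$. Integrating by parts in $t$ and sending $y \to 0^+$ reduces $\lim_{y\to 0^+} y^{1-2\sigma}\partial_y u$ to an integral of the form $\int_0^\infty t^{-\sigma}\Delta e^{t\Delta} f\, dt$, which by Bochner subordination $(-\Delta)^\sigma f = \Gamma(-\sigma)^{-1}\int_0^\infty (e^{t\Delta} f - f)\, t^{-1-\sigma}\,dt$ equals a precise multiple of $(-\Delta)^\sigma f$; tracking constants delivers $-2^{2\sigma-1}\Gamma(\sigma)/\Gamma(1-\sigma)$. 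Uniqueness is handled spectrally: the Helgason--Fourier transform reduces the extension PDE, for each spectral parameter, to a Bessel ODE in $y$ whose only solution decaying as $y \to \infty$ with prescribed value at $y = 0$ is a multiple of $y^\sigma K_\sigma(\mu y)$, with $\mu$ the square root of the spectral value of $-\Delta$; Plancherel then yields uniqueness in the appropriate weighted energy class.

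The principal technical obstacle is not in the algebra but in justifying all the interchanges of $\int_0^\infty\!\, dt$ with spatial operations and with integration by parts in $t$, particularly near $t = 0$ where $t^{-1-\sigma}$ is singular, and near $t = \infty$ on $X$. On $\R^n$ the explicit Gaussian heat kernel trivialises this. On a Riemannian symmetric space of noncompact type one must invoke Anker-type heat kernel estimates, whose non-Euclidean behaviour from the $\rho$-shift makes the small-$t$ asymptotics delicate; however, once $f \in H^\sigma(X)$ there is enough spectral regularity to absorb the $t^{-1-\sigma}$ singularity, and strict positivity of the bottom $|\rho|^2$ of the $L^2$-spectrum of $-\Delta$ guarantees exponential decay of $e^{t\Delta} f$ at $t = \infty$ sufficient to annihilate all boundary terms.
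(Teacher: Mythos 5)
Your proposal is correct and follows essentially the same route the paper relies on: the theorem is quoted from Banica--Gonz\'alez--S\'aez, and the paper's own companion result (Theorem \ref{soln-extn-possion}) is proved by exactly this Stinga--Torrea heat-semigroup subordination, with the same kernel identity in $t$ and $y$, the same substitution $s=y^2/4t$ for the boundary value, and the same Bochner-type integration by parts for the Neumann limit (your constants check out). The only point to keep in mind is that uniqueness genuinely requires specifying the energy class, as you note, since the second Bessel-type solution also vanishes at $y=0$.
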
 
The following theorem gives an alternative expression of a solution of the extension problem  (\ref{extn1}), which will be useful for us. The  proof is similar to \cite[Theorem 1.1]{ST}. See also \cite[Theorem 3.1]{BGS} . For the sake of completeness we give a proof in section $3$. 
\begin{thm}\label{soln-extn-possion}
Let $f\in Dom(-\Delta)^\sigma$. A solution of (\ref{extn1}) is given by
\be \label{expression-u-1}
u(x,y)=\frac{1}{\Gamma(\sigma)}\int_{0}^{\infty}e^{t\Delta}(-\Delta)^{\sigma}f(x) e^{-y^2/4t}~\frac{dt}{t^{1-\sigma}},
\ee
and $u$ is related to $(-\Delta)^\sigma f$ by the equation (\ref{u-f-reln}). Moreover, the following Poisson formula for $u$ holds:
\be \label{u-conv-f}
u(x, y)=\int_X f(\zeta)P_y^\sigma ( \zeta^{-1} x)~d\zeta=(f \ast P_y^\sigma)(x),
\ee
where
\be \label{defn-P}
P_y^\sigma(x)=\frac{y^{2\sigma}}{4^\sigma \Gamma(\sigma)}\int_{0}^{\infty} h_t(x)~e^{-y^2/4t}~\frac{dt}{t^{1+\sigma}}.
\ee
\end{thm}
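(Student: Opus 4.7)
My approach follows the Caffarelli--Silvestre--Stinga--Torrea template. I first check that (\ref{expression-u-1}) solves the PDE in (\ref{extn1}) by differentiating under the integral sign; next verify the boundary condition and the flux identity (\ref{u-f-reln}); and finally derive the Poisson representation either by a direct spectral computation or by invoking uniqueness of the extension.

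For the PDE, the two $y$-derivatives of $u$ are obtained simply by differentiating the Gaussian factor, producing integrands weighted by $t^{\sigma-2}$ and $t^{\sigma-3}$. The $\Delta u$ term is handled using $\Delta e^{t\Delta}=\partial_t e^{t\Delta}$, which converts $\Delta$ into a $t$-derivative; integration by parts in $t$ then shifts that derivative onto the weight $e^{-y^2/(4t)}t^{\sigma-1}$. The boundary terms vanish: at $t=0^+$ the Gaussian crushes the $t^{\sigma-1}$ singularity, and at $t=\infty$ the symmetric-space spectral gap $|\rho|^2>0$ forces $e^{t\Delta}(-\Delta)^\sigma f$ to decay exponentially. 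A routine algebraic rearrangement then shows that the three surviving integrals cancel identically. For the boundary data, dominated convergence in (\ref{expression-u-1}) together with the heat-semigroup formula $(-\Delta)^{-\sigma}=\tfrac{1}{\Gamma(\sigma)}\int_0^\infty t^{\sigma-1}e^{t\Delta}\,dt$ yields $\lim_{y\to 0^+}u(x,y)=(-\Delta)^{-\sigma}(-\Delta)^\sigma f(x)=f(x)$.

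For (\ref{u-f-reln}), the key manoeuvre is the substitution $s=y^2/(4t)$ in $y^{1-2\sigma}\partial_y u$, which collects the entire $y$-dependence into the operator $e^{(y^2/(4s))\Delta}$ and produces
\bes
y^{1-2\sigma}\partial_y u(x,y)=-\frac{4^{1-\sigma}}{2\Gamma(\sigma)}\int_0^\infty e^{(y^2/(4s))\Delta}(-\Delta)^\sigma f(x)\,e^{-s}\,s^{-\sigma}\,ds.
\ees
Strong continuity of the heat semigroup and $\int_0^\infty e^{-s}s^{-\sigma}\,ds=\Gamma(1-\sigma)$ give the limit $-2^{1-2\sigma}\Gamma(1-\sigma)/\Gamma(\sigma)\,(-\Delta)^\sigma f(x)$, which is exactly the constant needed to match (\ref{u-f-reln}). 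For the Poisson representation, Fubini applied to (\ref{defn-P}) yields
\bes
(f\ast P_y^\sigma)(x)=\frac{y^{2\sigma}}{4^\sigma\Gamma(\sigma)}\int_0^\infty e^{t\Delta}f(x)\,e^{-y^2/(4t)}\,\frac{dt}{t^{1+\sigma}}.
\ees
To identify this with (\ref{expression-u-1}) I would insert the spectral resolution $-\Delta=\int_0^\infty\lambda\,dE_\lambda$ and evaluate the inner $t$-integrals by the Macdonald identity $\int_0^\infty e^{-at-b/t}t^{\nu-1}\,dt=2(b/a)^{\nu/2}K_\nu(2\sqrt{ab})$, applied with $\nu=\sigma$ on one side and $\nu=-\sigma$ on the other; since $K_{-\sigma}=K_\sigma$, both expressions collapse to the common spectral multiplier $2^{1-\sigma}y^\sigma\lambda^{\sigma/2}K_\sigma(y\sqrt{\lambda})$. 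Alternatively, one may verify directly that $f\ast P_y^\sigma$ solves (\ref{extn1}) with boundary data $f$ and invoke uniqueness from the Banica--Gonz\'alez--S\'aez theorem above.

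The main obstacle I anticipate is the justification of the various interchanges of integrals and differentiations: the weights $t^{\sigma-1}$ and $t^{-\sigma-1}$ are singular at the origin and have to be controlled against the Gaussian $e^{-y^2/(4t)}$, while the tail $t\to\infty$ requires the quantitative heat-kernel decay supplied by the spectral gap on $X$. Beyond these technicalities, each step reduces to a direct calculation.
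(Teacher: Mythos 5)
Your proposal is correct and follows essentially the same route as the paper: the paper delegates the verification of the PDE, the boundary condition and the flux identity (\ref{u-f-reln}) to the Stinga--Torrea argument (which is exactly the computation you sketch), and establishes the Poisson formula by pairing with $g\in L^2(X)$, inserting the spectral measure $dE_{f,g}$, and applying the substitution $r\lambda=y^2/(4t)$ -- which is precisely the Macdonald-function identity $K_{-\sigma}=K_\sigma$ you invoke -- before using $e^{t\Delta}f=f\ast h_t$ and Fubini. Your constants in the Neumann limit check out against (\ref{u-f-reln}), so no gap.
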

All these identities in theorem above are to be understood in the $L^2$ sense. The mapping properties of the Poisson operator $P_\sigma$ on $\R^n$ which maps boundary value $f$ to the solution $u$ of the extension problem (\ref{extn-rn}) were studied by M\"ollers et al. \cite{MOZ}. In the same paper, the authors have also obtained a similar result for Heisenberg groups. On the Euclidean spaces, they proved the following
\begin{thm}[M\"ollers; $\O$rsted;  Zhang]\label{mapping-property-real}
Let $0< \sigma< \frac{n}{2}$. Then
\begin{enumerate}
\item $P_\sigma: H^\sigma(\R^{n}) \ra H^{\sigma+1/2}(\R^{n}\times \R^+)$ is isometric up to a constant.
\item $P_\sigma$ extends to a bounded operator from $L^p(\R^n)$ to $L^q(\R^n\times \R_+)$, for $1<p\leq \infty$ and $q=\frac{n+1}{n}p$ (Figure 1, (a)). 
\end{enumerate}
\end{thm}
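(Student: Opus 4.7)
The plan is to prove the two parts separately: part (1) is an $L^2$-isometry statement that reduces to a Plancherel computation, and part (2) is a classical endpoint interpolation result for the Poisson integral.

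For part (1), I take the Fourier transform in the $x$ variable, which turns the extension equation (\ref{extn-rn}) into the ordinary differential equation
\bes
\widehat{u}_{yy}+\frac{1-2\sigma}{y}\widehat{u}_y-|\xi|^2\widehat{u}=0,\qquad \widehat{u}(\xi,0)=\widehat{f}(\xi).
\ees
The unique solution decaying at $y=\infty$ is $\widehat{u}(\xi,y)=\varphi_\sigma(y|\xi|)\widehat{f}(\xi)$, where $\varphi_\sigma(r)=\frac{2^{1-\sigma}}{\Gamma(\sigma)}r^{\sigma}K_\sigma(r)$ is a normalized modified Bessel function of the second kind (so that $\varphi_\sigma(0)=1$). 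The natural Sobolev norm on the half-space compatible with the degenerate elliptic operator in (\ref{extn-rn}) is the weighted one
\bes
\|u\|_{H^{\sigma+1/2}(\R^n\times\R^+)}^2 = \int_0^\infty\!\!\int_{\R^n} y^{1-2\sigma}\bigl(|\nabla_x u|^2+|\partial_y u|^2\bigr)\,dx\,dy,
\ees
plus a mass term. Applying Plancherel in $x$ and substituting $r=y|\xi|$ in the $y$-integral separates variables and yields, for each summand,
\bes
c_\sigma \int_{\R^n}|\xi|^{2\sigma}|\widehat{f}(\xi)|^2\,d\xi,
\ees
with $c_\sigma$ given by explicit one-dimensional Bessel integrals such as $\int_0^\infty r^{1-2\sigma}|\varphi_\sigma(r)|^2\,dr$ and $\int_0^\infty r^{1-2\sigma}|\varphi_\sigma'(r)|^2\,dr$. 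The near-origin asymptotic $K_\sigma(r)\sim \Gamma(\sigma)2^{\sigma-1}r^{-\sigma}$ and the exponential decay $K_\sigma(r)\sim\sqrt{\pi/(2r)}\,e^{-r}$ at infinity guarantee convergence, and the total matches $c\|f\|_{H^\sigma(\R^n)}^2$ up to a single multiplicative constant, giving the asserted isometry.

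For part (2), I use the Poisson integral representation $u(x,y)=(f\ast K_\sigma^{(y)})(x)$, where $K_\sigma^{(y)}(x)=c_{n,\sigma}\,y^{2\sigma}(|x|^2+y^2)^{-\sigma-n/2}$, and I establish two endpoint estimates from which all intermediate cases follow by Marcinkiewicz interpolation. The substitution $x=yu$ shows $\|K_\sigma^{(y)}\|_{L^1(\R^n)}$ is a constant independent of $y$, so $\|u\|_{L^\infty(\R^n\times\R^+)}\le c\|f\|_{L^\infty(\R^n)}$. For the $L^1$ endpoint I combine the pointwise bound $|u(x,y)|\le c\,y^{-n}\|f\|_{L^1}$ (from $K_\sigma^{(y)}(x)\le c\,y^{-n}$) with the slicewise bound $\|u(\cdot,y)\|_{L^1(\R^n)}\le c\|f\|_{L^1(\R^n)}$. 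For $\alpha>0$ this forces $\{|u|>\alpha\}$ to be empty once $y>(c\|f\|_{L^1}/\alpha)^{1/n}$ while giving $x$-measure at most $c\|f\|_{L^1}/\alpha$ on each $y$-slice, so integrating in $y$ yields
\bes
\bigl|\{(x,y):|u(x,y)|>\alpha\}\bigr|\le c\bigl(\|f\|_{L^1}/\alpha\bigr)^{(n+1)/n},
\ees
i.e.\ $P_\sigma:L^1(\R^n)\to L^{(n+1)/n,\infty}(\R^n\times\R^+)$ is of weak type. Marcinkiewicz interpolation between this and the trivial $L^\infty\to L^\infty$ bound produces the strong $L^p\to L^q$ estimate for $1<p\le\infty$ with $q=\frac{n+1}{n}p$; this exponent relation is in any case forced by the scalings $\|f(\lambda\cdot)\|_{L^p(\R^n)}=\lambda^{-n/p}\|f\|_{L^p}$ and $\|u(\lambda\cdot,\lambda\cdot)\|_{L^q(\R^n\times\R^+)}=\lambda^{-(n+1)/q}\|u\|_{L^q}$.

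The main obstacle I expect is part (1): pinning down the correct definition of $H^{\sigma+1/2}(\R^n\times\R^+)$ (there are several natural weighted candidates suggested by the degenerate operator) and verifying that the Bessel-type integrals converge and reduce to a single common constant of proportionality across the terms, particularly as $\sigma$ approaches the endpoints $0$ and $1$ where the weight $y^{1-2\sigma}$ becomes singular near the boundary. Part (2), by contrast, is essentially mechanical once the weak-$L^1$ endpoint is in place.
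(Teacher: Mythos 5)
Part (2) of your proposal is correct and is essentially the argument the paper itself transplants to symmetric spaces in Section 5: the pointwise bound $|u(x,y)|\le c\,y^{-n}\|f\|_{L^1}$ restricts the superlevel set to a strip $y\le (c\|f\|_{L^1}/\alpha)^{1/n}$, Chebyshev on each slice gives the weak-type $(1,\tfrac{n+1}{n})$ bound, and Marcinkiewicz interpolation against the $L^\infty\to L^\infty$ endpoint (which follows from $\|K_\sigma^{(y)}\|_{L^1(\R^n)}$ being independent of $y$) yields the full line $q=\tfrac{n+1}{n}p$. No complaints there.

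Part (1), however, has a genuine gap: you compute the wrong norm on the target space. The space $H^{\sigma+1/2}(\R^n\times\R^+)$ in this theorem (see the paper's definition of $H^{\sigma}(X\times\R)$ in Section 2, which is modelled on the Euclidean one from \cite{MOZ}) is the \emph{unweighted} fractional Sobolev space of order $\sigma+\tfrac12$ on the product, i.e.\ the norm $\int |\mathcal F u(\xi,\eta)|^2(|\xi|^2+\eta^2)^{\sigma+1/2}\,d\xi\,d\eta$ restricted to functions even in $y$ --- not the Caffarelli--Silvestre weighted Dirichlet energy $\int y^{1-2\sigma}(|\nabla_x u|^2+|\partial_y u|^2)$ that you evaluate. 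The identity you prove (weighted energy $= c_\sigma\|f\|_{\dot H^\sigma}^2$) is the standard extension-energy identity, but it is a different statement, and it cannot be upgraded to the claimed one by your method: your one-dimensional integrals $\int_0^\infty r^{1-2\sigma}|\varphi_\sigma(r)|^2\,dr$ diverge at $r=0$ (where $\varphi_\sigma\to 1$) as soon as $\sigma\ge 1$, whereas the theorem covers the whole range $0<\sigma<n/2$. The correct route, which is exactly how the paper proves its symmetric-space analogue (Theorem \ref{mapping-property}(1)), is to take the Fourier transform in \emph{both} variables: multiplying the extension equation by $y^2$ and transforming turns it into the ODE
\bes
\left\{(|\xi|^2+\eta^2)\frac{\partial^2}{\partial\eta^2}+(3+2\sigma)\eta\frac{\partial}{\partial\eta}+(1+2\sigma)\right\}\mathcal U(\xi,\eta)=0,
\ees
whose admissible solution after the rescaling $t=\eta/|\xi|$ is a multiple of $\psi(t)=(1+t^2)^{-\sigma-1/2}$, normalized by $\int_\R\psi\,dt$ to match the boundary datum. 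The $H^{\sigma+1/2}$ norm then factors as $\|f\|_{H^\sigma(\R^n)}^2$ times $\int_\R|\psi(t)|^2(1+t^2)^{\sigma+1/2}\,dt=\int_\R(1+t^2)^{-\sigma-1/2}\,dt$, which converges for every $\sigma>0$. You flagged the ambiguity in the definition of the target space yourself; resolving it in favor of the product Fourier-side norm is precisely the missing step.
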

In \cite{Chen}, Chen proved that for particular values $p=\frac{2n}{n-2\sigma}$ and $q=\frac{2n+2}{n-2\sigma}$, there exists a sharp constant $C$ such that 
\bes
\|P_\sigma f\|_{L^q(\R^n)}\leq C\|f\|_{L^p(\R^n)}, \:\: \textit{ for } f\in L^p(\R^n),
\ees
and the optimizer of this inequality are translations, dilations and multiples of the function
\begin{equation}
\nonumber
f(x)=\left(1+|x|^2\right)^{-\frac{n}{2} +\sigma}.
\end{equation} 

Our second main aim in this article is to study the mapping properties of the ``Poisson operator'' $T_\sigma$ given by 
\be \label{defn-extn}
T_\sigma f(x, y)= f\ast P_y^\sigma, \:\:  x\in X,\: y>0,
\ee
which maps $f$ to the solution $u$ of the extension problem (\ref{extn1}) related to the Laplace-Beltrami operator on Riemannian symmetric spaces of noncompact type. The following analogue of Theorem \ref{mapping-property-real} is our main result in this direction. 
\begin{thm}\label{mapping-property}
Let $\dim X=n$ and $0< \sigma< 1$. Then
\begin{enumerate}
\item $T_\sigma: H^\sigma(X) \ra H^{\sigma+1/2}(X\times \R_+)$ is isometric up to a constant.
		
\item $T_\sigma$ extends to a bounded operator from $L^p(X)$ to $L^q(X\times \R_+)$, for $1< p <\infty$ and $p<q\leq\frac{n+1}{n}p$; and from $L^1(X)$ to $L^q(X)$, for $1<q< \frac{n+1}{n}$ (Figure 1, (b)).
\end{enumerate}
\end{thm}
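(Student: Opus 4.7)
My plan is to treat the two parts by complementary spectral and convolution-theoretic methods: part (1) via the Helgason-Fourier transform on $X$, and part (2) via sharp heat-kernel estimates combined with Young's and Kunze-Stein convolution inequalities applied to the kernel $P_y^\sigma$ of (\ref{defn-P}).

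For part (1), I would first compute the Helgason-Fourier transform of $P_y^\sigma$. Since $\widetilde{h_t}(\lambda) = e^{-t(|\lambda|^2+|\rho|^2)}$, the formula (\ref{defn-P}) and the standard Macdonald-function identity $\int_0^\infty e^{-at-b/t}t^{-\sigma-1}\,dt = 2(a/b)^{\sigma/2} K_\sigma(2\sqrt{ab})$ give
\bes
\widetilde{P_y^\sigma}(\lambda) = \frac{2^{1-\sigma}}{\Gamma(\sigma)}(y\mu_\lambda)^\sigma K_\sigma(y\mu_\lambda), \qquad \mu_\lambda := \sqrt{|\lambda|^2+|\rho|^2}.
\ees
Hence on the Fourier side $\widetilde{T_\sigma f}(\lambda,k,y) = \widetilde f(\lambda,k)\,\Phi_\sigma(y\mu_\lambda)$ with $\Phi_\sigma(r) = c_\sigma r^\sigma K_\sigma(r)$. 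I would then expand $\|T_\sigma f\|_{H^{\sigma+1/2}(X\times\R_+)}^2$ in Fourier coordinates using the spectral characterization of this Sobolev space (with the natural weight $y^{1-2\sigma}\,dy$ inherited from the extension problem), and perform the change of variables $r = y\mu_\lambda$ to decouple the $\lambda$- and $y$-integrations. This produces a $\lambda$-independent scalar factor $c_\sigma'\int_0^\infty r^{1-2\sigma}\bigl(|\Phi_\sigma(r)|^2 + |\Phi_\sigma'(r)|^2\bigr)\,dr$, finite by the asymptotics of $K_\sigma$ at $0$ and $\infty$, multiplied by the spectral expression for $\|f\|_{H^\sigma(X)}^2$, yielding the isometry up to constant.

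For part (2), I would obtain dispersive estimates on $P_y^\sigma$ in the $L^r(X)$ norm. Splitting (\ref{defn-P}) according to $t\leq 1$ and $t\geq 1$ and applying the sharp heat-kernel bounds on $X$ (polynomial decay for small $t$, exponential decay $e^{-|\rho|^2 t}$ for large $t$), one gets estimates $\|P_y^\sigma\|_{L^r(X)} \leq C(y,r)$ whose $y$-profile is polynomial near $0$ and exponentially decaying at $\infty$. Writing
\bes
\|T_\sigma f\|_{L^q(X\times\R_+)}^q = \int_0^\infty \|f\ast P_y^\sigma\|_{L^q(X)}^q\,dy,
\ees
I would bound the inner convolution by Young's inequality $\|f\ast P_y^\sigma\|_q \leq \|f\|_p\,\|P_y^\sigma\|_r$ with $1/r = 1 + 1/q - 1/p$; for $1<p<2$ I would supplement this with the Kunze-Stein phenomenon $\|f\ast g\|_p \lesssim \|f\|_p\|g\|_2$, special to the noncompact symmetric space. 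Integrating the resulting $y$-estimates and interpolating between the endpoints should produce both the strip $p<q\leq\frac{n+1}{n}p$ and the $L^1$ range $1<q<\frac{n+1}{n}$.

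The main obstacle I anticipate is in part (2): tracking the $y$-dependence of $\|P_y^\sigma\|_r$ across the small-$t$ and large-$t$ regimes of the heat kernel, and interpolating between the Young and Kunze-Stein bounds to recover precisely the stated $(p,q)$-strip. The presence of an open range of $q$ (in contrast to the single line $q = \frac{n+1}{n}p$ of Theorem \ref{mapping-property-real}) is a genuine consequence of the exponential volume growth of $X$, and extracting the sharp thresholds will require delicate interpolation rather than a direct scaling argument.
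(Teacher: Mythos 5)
Your Fourier-side formula for the kernel is correct: from (\ref{defn-P}) and $\widehat{h_t}(\lambda)=e^{-t(|\lambda|^2+|\rho|^2)}$ one indeed gets $\widehat{P_y^\sigma}(\lambda)=\frac{2^{1-\sigma}}{\Gamma(\sigma)}(y\mu_\lambda)^\sigma K_\sigma(y\mu_\lambda)$, and this is a legitimate alternative to the paper's route for part (1), which instead conjugates the extension equation by the Helgason and Euclidean Fourier transforms and solves the resulting ODE, landing on the profile $\psi_1(t)=(1+t^2)^{-\sigma-1/2}$ (exactly the one-dimensional Fourier transform of your Macdonald profile). The problem is that you then compute the wrong norm. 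The space $H^{\sigma+1/2}(X\times \R_+)$ of the theorem is defined in Section 2 via the Euclidean Fourier transform in $y$ against \emph{unweighted} Lebesgue measure, with symbol $(|\lambda|^2+|\rho|^2+\xi^2)^{\sigma+1/2}$; it is not the weighted Dirichlet energy that your expression $\int_0^\infty r^{1-2\sigma}\bigl(|\Phi_\sigma(r)|^2+|\Phi_\sigma'(r)|^2\bigr)\,dr$ corresponds to. What you prove is the Caffarelli--Silvestre energy identity, a true but different statement. The fix is short given what you already have: apply $\mathcal F_y$ to $\widetilde f(\lambda,k)\,\Phi_\sigma(y\mu_\lambda)$ to obtain $c\,\widetilde f(\lambda,k)\,\mu_\lambda^{2\sigma}(\mu_\lambda^2+\xi^2)^{-\sigma-1/2}$, then integrate against $(\mu_\lambda^2+\xi^2)^{\sigma+1/2}\,|{\bf c}(\lambda)|^{-2}\,d\xi\,d\lambda\,dk$; the $\xi$-integral contributes $c\,\mu_\lambda^{-2\sigma}$ and the whole expression collapses to a constant times $\|f\|_{H^\sigma(X)}^2$.

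For part (2), your Young-inequality scheme does recover the open strip: with $1/r=1+1/q-1/p$, Theorem \ref{est-Pysigma} gives $\|P_y^\sigma\|_{L^r(X)}\lesssim y^{-n/r'}$ for small $y$ and exponential decay for large $y$, so $\int_0^\infty\|P_y^\sigma\|_{L^r(X)}^q\,dy<\infty$ precisely when $q<\frac{n+1}{n}p$; this also covers $L^1\to L^q$ for $1<q<\frac{n+1}{n}$, and is a perfectly serviceable substitute for the paper's combination of the Cowling--Giulini--Meda form of Kunze--Stein with Lemma \ref{lem-Py-sigma-lq} and Riesz--Thorin interpolation. The genuine gap is the endpoint line $q=\frac{n+1}{n}p$ for $p>1$, which is part of the assertion. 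No interpolation of your strong-type bounds can reach it: interpolating an interior point $(p_0,q_0)$ with $q_0/p_0<\frac{n+1}{n}$ against the trivial $(\infty,\infty)$ bound keeps the ratio $q/p=q_0/p_0$ strictly below $\frac{n+1}{n}$, and the $y$-integral in the Young bound diverges logarithmically exactly at $q=\frac{n+1}{n}p$. The paper closes this by proving the weak-type estimate $T_\sigma:L^1(X)\to L^{\frac{n+1}{n},\infty}(X\times\R_+)$ --- using the pointwise bound $|T_\sigma f(x,y)|\leq C y^{-n}\|f\|_{L^1(X)}$ coming from Theorem \ref{est-Pysigma}, a Chebyshev argument, and $\int_X P_y^\sigma(x)\,dx=1$ --- and then applying Marcinkiewicz interpolation against $L^\infty\to L^\infty$. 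You need to add this weak-type endpoint argument (or an equivalent) to obtain the closed range of $q$.
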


\begin{figure}[ht]
{\centering{\includegraphics{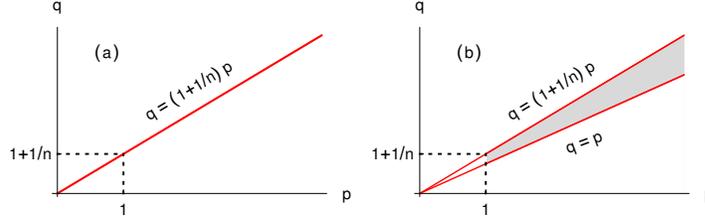}}\par}
\label{fig1}
\caption{(a) Euclidean
(b) Symmetric spaces}
\end{figure}

\begin{rem} \label{rem-k-s}
In contrast with Theorem \ref{mapping-property-real} on Euclidean space, the exponents $p, q$ in Theorem \ref{mapping-property} on $X$ can vary over a much larger region (see in the figure 1 above). This striking phenomenon comes as a consequence of the Kunze-Stein phenomenon. The Kunze-Stein phenomenon, proved by Cowling \cite{Cowling} on connected semi-simple Lie groups $G$ with finite center, says that the convolution inequality
\bes
L^2(G) \ast L^p(G)\subset L^2(G),
\ees
holds for $p \in [1, 2)$. We note that above inequalities on Euclidean space are only valid for $p = 1$. We use the following generalize version \cite[Theorem 2.2, (ii)]{CGM}: let $k\in L^q(X)$, for $1< q\leq 2$ and let $1\leq p<q$. Then the map $f \mapsto f \ast k$ is bounded from $L^p(X)$ to $L^q(X)$.
\end{rem}

An explicit expression of the heat
kernel is known for certain symmetric spaces. Using this in section 5, we write the precise expression of the kernel $P_y^\sigma$ in the case of complex and rank one symmetric spaces. 

The final topic we shall deal with here is analogues of the Poincar\'e-Sobolev inequalities for the fractional Laplace-Beltrami operator on $X$. In \cite{MS},
Mancini and Sandeep proved the following 
optimal Poincar\'{e}-Sobolev inequalities for the Laplace-Beltrami operator $\Delta_{\mathbb H^n}$ on the real hyperbolic space $\mathbb H^n$ of dimension $n\geq 3$. 
\begin{thm} $($Mancini; Sandeep$)$\label{thm-vor}
Let $n\geq 3$. Then for $2<p\leq \frac{2n}{n-2}$, there exists $S=S_{n, p}>0$ such that for all $u\in C_c^\infty(\mathbb H^n)$, 
\bes
\|\left(-\Delta_{\mathbb H^n}-(n-1)^2/4)\right)^{1/2}u\|_{L^2(\mathbb{H}^n)}^2 \geq S\|u\|_{L^p(\mathbb{H}^n)}^2.
\ees
\end{thm}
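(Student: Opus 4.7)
The plan is to reduce the inequality to a Hardy-Sobolev-Maz'ya type inequality on the open unit ball via the classical conformal-change trick. I realise $\mathbb{H}^n$ as the Poincar\'e ball $B = \{x \in \R^n : |x| < 1\}$ with conformal metric $g = \rho^{2}\, dx\otimes dx$, where $\rho(x) = 2/(1-|x|^2)$; then $dV_g = \rho^{n}\, dx$ and $|\nabla_g u|^{2} = \rho^{-2}|\nabla u|^{2}$. Given $u \in C_c^\infty(\mathbb{H}^n)$, set $v := \rho^{(n-2)/2} u$, so $v \in C_c^\infty(B)$. The $L^p$-norms transform as
\bes
\int_{\mathbb{H}^n} |u|^{p}\, dV_g = \int_{B} |v|^{p}\, \rho^{\, n-p(n-2)/2}\, dx,
\ees
with weight identically $1$ precisely at the critical exponent $p^{*} = 2n/(n-2)$.

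For the energy side, the standard conformal-Laplacian calculation (expand $|\nabla(\rho^{-(n-2)/2}v)|^{2}$, multiply by $\rho^{n-2}$, and integrate by parts the cross term using the explicit $\Delta\rho$ and $|\nabla\rho|^{2}$) gives
\bes
\int_{\mathbb{H}^n}\!\left(|\nabla_g u|^{2} - \tfrac{n(n-2)}{4} u^{2}\right) dV_g = \int_{B} |\nabla v|^{2}\, dx.
\ees
Since $(n-1)^{2}/4 - n(n-2)/4 = 1/4$ and $\int_{\mathbb{H}^n} u^{2}\, dV_g = \int_B v^{2}\rho^{2}\, dx = 4\int_B v^{2}/(1-|x|^{2})^{2}\, dx$, the spectral-shifted energy becomes
\bes
\|(-\Delta_{\mathbb{H}^n} - \tfrac{(n-1)^{2}}{4})^{1/2} u\|_{L^{2}(\mathbb{H}^n)}^{2}
= \int_{B}|\nabla v|^{2}\, dx - \int_{B} \frac{v^{2}}{(1-|x|^{2})^{2}}\, dx.
\ees
Thus the statement reduces to a Hardy-Sobolev-Maz'ya inequality on $B$: the Dirichlet energy minus this (sub-critical) Hardy penalty must dominate $\|v\|_{L^{p}(B,\, \rho^{n-p(n-2)/2}dx)}^{2}$.

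At the critical exponent $p = p^{*}$ the weight is $1$, and the required inequality is exactly a Hardy-Sobolev-Maz'ya inequality on the ball, whose sharp constant is the Euclidean Sobolev constant. A convenient way to prove it is to apply a M\"obius map sending $B$ to the upper half-space; this transforms $1/(1-|x|^{2})^{2}$ into the standard boundary Hardy weight $1/(4 x_n^{2})$ and reduces the statement to Maz'ya's classical inequality on $\R^n_+$. For $2 < p < p^{*}$, the weight $\rho^{n-p(n-2)/2}$ is a positive power of $1-|x|^{2}$ and hence vanishes on $\partial B$, so the subcritical inequality follows from the critical one either by H\"older against this integrable weight, or by hyperbolic symmetric decreasing rearrangement (which preserves the $L^p$-norm and decreases the Dirichlet energy) to reduce the problem to a one-dimensional weighted Bliss-type inequality on $[0,\infty)$.

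The main obstacle is the Hardy-Sobolev-Maz'ya step at $p = p^{*}$: one must verify that the Dirichlet energy minus the (non-sharp) Hardy weight $\int v^{2}/(1-|x|^{2})^{2}\, dx$ is not only still positive but in fact dominates the critical $L^{p^{*}}$-norm. Identifying the optimal constant $S_{n,p^{*}}$ and its extremisers (Aubin-Talenti bubbles pulled back through the conformal map) is substantially more delicate than the mere non-sharp version, and would typically invoke a concentration-compactness argument exploiting the translation invariance of $\mathbb{H}^n$ to rule out loss of mass at infinity.
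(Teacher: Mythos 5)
You should first note that the paper does not prove Theorem \ref{thm-vor} at all: it is quoted from Mancini--Sandeep \cite{MS}, and what the paper actually proves is the generalization Theorem \ref{thm-p-s-X}, whose proof (Section 7) recovers this statement by taking $\sigma=2$ on $X=\mathbb H^n$ (where $|\rho|=(n-1)/2$ and $l+2|\Sigma_0^+|=3$). That proof dualizes the inequality to an $L^{p'}\to L^p$ bound for convolution with the Bessel--Green--Riesz kernel $k_\sigma$ of $(-\Delta-|\rho|^2)^{-\sigma/2}$, splits $k_\sigma$ into a local part (handled by Young and weak-type Young inequalities) and a global part (handled by Stein's analytic interpolation, exploiting the exponential decay $k_\sigma(x)\asymp |x|^{\sigma-l-2|\Sigma_0^+|}\phi_0(x)$ at infinity). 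Your route --- conformal change to the Poincar\'e ball and reduction to a Hardy--Sobolev--Maz'ya inequality --- is genuinely different and is in fact much closer to what \cite{MS} does. Your critical-exponent computation is correct: $(n-1)^2/4-n(n-2)/4=1/4$, $\int_{\mathbb H^n}u^2\,dV_g=4\int_B v^2(1-|x|^2)^{-2}dx$, and at $p=2n/(n-2)$ the claim becomes exactly the Hardy--Sobolev--Maz'ya inequality on $B$ (equivalently on $\R^n_+$ after a M\"obius map), so that case is sound modulo quoting Maz'ya.

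The subcritical step, however, has a genuine gap. The weight is $\rho^{\,n-p(n-2)/2}$ with $\rho(x)=2/(1-|x|^2)$ and exponent $n-p(n-2)/2\in(0,2)$ for $2<p<2n/(n-2)$; this is a \emph{positive} power of $\rho$, hence a \emph{negative} power of $1-|x|^2$, so it blows up at $\partial B$ rather than vanishing there, and for $2<p\le 2(n-1)/(n-2)$ it is not even integrable over $B$. H\"older against this weight therefore cannot close the argument. More structurally, one cannot descend from the critical exponent by interpolating with $L^2$, because the left-hand side does not control $\|u\|_{L^2(\mathbb H^n)}^2$: the constant $(n-1)^2/4$ is the bottom of the $L^2$-spectrum of $-\Delta_{\mathbb H^n}$, so there is no spectral gap, and the validity of the inequality for \emph{all} $2<p<2n/(n-2)$ is precisely the genuinely hyperbolic content of the theorem (it fails on $\R^n$). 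The subcritical range needs its own argument: either the symmetrization/one-dimensional reduction you mention only in passing (which is essentially what \cite{MS} carries out), or the Green-kernel estimates of Section 7, where the exponential decay of $k_\sigma$ at infinity is exactly what buys the whole range $2<p\le 2n/(n-\sigma)$.
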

In case of real hyperbolic space $\mathbb H^3$ of dimension three, Benguria, Frank and Loss \cite{BFL} proved that the best constant $S_3$ in the theorem above is the same as the best sharp Sobolev constant for the first order Sobolev inequality on $\mathbb H^3$. Recently, using Green kernel estimates Li, Lu, Yang \cite[Theorem 6.2]{Li-Lu} proved the following Poincar\'{e}-Sobolev inequalities for the fractional Laplace-Beltrami operator $\Delta_{\mathbb H^n}$ on $\mathbb H^n$.
\begin{thm} $($Li; Lu; Yang$)$\label{thm-p-s}
Let $n \geq 3$ and $1 \leq \sigma < 3$. Then there exists a constant $C=C_{n, \sigma, p} > 0$ such that
\bes
\|\left(-\Delta_{\mathbb H^n}-(n-1)^2/4 \right)^{\frac{\sigma}{4}}u\|_{L^2(\mathbb H^n)}^2\geq C\|u\|^2_{L^{\frac{2n}{n-\sigma}}(\mathbb H^n)},\:\: \textit{ for } u\in H^{\frac{\sigma}{2}}(\mathbb H^n).
\ees
\end{thm}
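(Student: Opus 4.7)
The plan is to pass from the Sobolev-type inequality to a Hardy--Littlewood--Sobolev-type convolution estimate for the Bessel--Green--Riesz kernel via duality, and then to decompose this kernel into a local singular piece, handled by Euclidean HLS, and a global exponentially decaying piece, handled by the Kunze--Stein phenomenon. Set $A:=-\Delta_{\mathbb H^n}-(n-1)^2/4$ and $\rho=(n-1)/2$. Since the $L^2$-spectrum of $-\Delta_{\mathbb H^n}$ is $[\rho^2,\infty)$, the operator $A$ is positive and self-adjoint. Substituting $v=A^{\sigma/4}u$ and dualizing via the self-adjointness of $A^{-\sigma/4}$, the desired inequality is equivalent to
\bes
\|A^{-\sigma/4}w\|_{L^2(\mathbb H^n)}\leq C\,\|w\|_{L^{q'}(\mathbb H^n)},\qquad q'=\frac{2n}{n+\sigma}.
\ees

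Write $A^{-\sigma/4}$ as convolution with the radial kernel
\bes
\mathcal{G}_{\sigma/4}(x)=\frac{1}{\Gamma(\sigma/4)}\int_0^\infty t^{\sigma/4-1}e^{t\rho^2}h_t(x)\,dt,
\ees
where $h_t$ denotes the heat kernel on $\mathbb H^n$. Feeding the sharp two-sided Davies--Mandouvalos estimates for $h_t$ into this subordination formula yields the asymptotics $\mathcal{G}_{\sigma/4}(r)\asymp r^{\sigma/2-n}$ as $r\to 0$ and $\mathcal{G}_{\sigma/4}(r)\lesssim P_{n,\sigma}(r)\,e^{-\rho r}$ as $r\to\infty$, for an explicit power-type function $P_{n,\sigma}$ whose order is dictated by $n$ and $\sigma$. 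Carefully extracting these asymptotics, and tracking their dependence on the parameters, is the most calculation-heavy ingredient of the argument.

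Now decompose $\mathcal{G}_{\sigma/4}=\mathcal{G}^{\mathrm{loc}}+\mathcal{G}^{\mathrm{glob}}$ by localizing on a unit geodesic ball $B(o,1)$ around the base point. Since the hyperbolic and Euclidean metrics are comparable at small scales, the local piece obeys $\mathcal{G}^{\mathrm{loc}}(x)\lesssim r^{\sigma/2-n}\chi_{\{r\leq 1\}}$ and is pointwise dominated by the Euclidean Riesz potential $I_{\sigma/2}$; the classical Hardy--Littlewood--Sobolev inequality with exponents $q'=2n/(n+\sigma)$ and $2$ (which match the relation $1/2=1/q'-(\sigma/2)/n$) delivers $\|w\ast\mathcal{G}^{\mathrm{loc}}\|_{L^2}\lesssim\|w\|_{L^{q'}}$. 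For the global piece, the exponential decay $e^{-\rho r}$ together with the volume growth $(\sinh r)^{n-1}\asymp e^{2\rho r}$ shows that $\mathcal{G}^{\mathrm{glob}}\in L^2(\mathbb H^n)$ precisely when $\sigma<3$---this is the origin of the upper restriction on $\sigma$. Applying the Kunze--Stein--Cowling convolution inequality from Remark \ref{rem-k-s} with $q=2$ and $p=q'<2$ then gives the matching $\|w\ast\mathcal{G}^{\mathrm{glob}}\|_{L^2}\lesssim\|w\|_{L^{q'}}$. Summing the local and global contributions yields the required convolution bound, and undoing the duality reduction completes the proof. The main obstacle is obtaining the sharp global kernel estimate in a form strong enough to enter the $L^2$-integrability threshold at $\sigma=3$; everything downstream is a clean combination of Euclidean HLS and the Kunze--Stein phenomenon.
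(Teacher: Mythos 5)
Your proposal is correct, and its skeleton --- a duality reduction, two-sided estimates for the Bessel--Green--Riesz kernel, and a local/global splitting of that kernel --- matches the proof the paper gives for its generalization, Theorem \ref{thm-p-s-X} (the paper does not reprove Theorem \ref{thm-p-s} itself but obtains it as the rank-one specialization, where $l+2|\Sigma_0^+|=3$). You differ in two genuine ways. First, you reduce to the half-power bound $(-\Delta_{\mathbb H^n}-\rho^2)^{-\sigma/4}:L^{q'}\to L^2$, whereas the paper reduces to $(-\Delta-|\rho|^2)^{-\sigma/2}:L^{p'}\to L^p$; both reductions are legitimate by self-adjointness. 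Second, and more substantially, for the global piece the paper runs Stein's complex interpolation on the analytic family $T_zf=f\ast (k_\sigma^\infty)^{1+z}$ (an $L^1\to L^\infty$ bound at $\Re z=-1/2$ against an $L^2\to L^2$ bound at $\Re z=\epsilon$), while you observe that the half-power kernel lies in $L^2(\mathbb H^n)$ exactly when $\sigma<3$ --- your computation is right, since that kernel is $\asymp r^{\sigma/2-2}e^{-\rho r}$ at infinity, $(\sinh r)^{n-1}\asymp e^{2\rho r}$, and $\int_1^\infty r^{\sigma-4}\,dr<\infty$ iff $\sigma<3$ --- and then invoke Kunze--Stein with $q=2$ and $p=q'=\frac{2n}{n+\sigma}<2$. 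Your route is more elementary and gives a transparent explanation of the threshold $\sigma<3$ as an $L^2$-integrability threshold; what the paper's interpolation buys is the full range $2<p\le \frac{2n}{n-\sigma}$ of Theorem \ref{thm-p-s-X}, which your endpoint argument does not immediately deliver (though Theorem \ref{thm-p-s} as stated only asks for the endpoint, so you lose nothing here). One small imprecision to repair: for the local piece you cannot literally quote Euclidean HLS after a pointwise domination, because the relevant convolution is the group convolution on $\mathbb H^n$, not the Euclidean one. The standard fix, which is what the paper does, is to note that $r^{\sigma/2-n}\chi_{\{r\le 1\}}$ lies in the Lorentz space $L^{\frac{n}{n-\sigma/2},\infty}(\mathbb H^n)$ because balls of radius $r\le 1$ have volume $\asymp r^n$, and then apply Young's inequality for weak-type spaces on the group; this reproduces exactly your exponent bookkeeping $\frac12=\frac{1}{q'}-\frac{\sigma}{2n}$.
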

For related results  and their sharpness, we refer the reader to 
\cite{Lu-Yang, Varo}. Our aim in the final section is to prove an  analogue of the Poincar\'{e}-Sobolev inequality for the fractional Laplace-Beltrami operator $\Delta$ on $X$ which generalizes the above mentioned theorems. The idea of the proof is to use the estimate of the Bassel-Green-Riesz kernel due to Anker-Ji \cite{AJ}. Since we are working on general Riemannian symmetric spaces of noncompact type, it is difficult to get the explicit values of the constants involve and we do not make attempt to get the optimal constant. Here is our final result. We refer the reader to the next section for the unexplained notation used in the theorem below.
\begin{thm} \label{thm-p-s-X}
Let $\dim X=n\geq 3$ and $0<\sigma< \min\{l+2|\Sigma_0^+|, n\}$. Then for $2< p\leq \frac{2n}{n-\sigma}$ there exists $S=S_{n, \sigma, p}>0$ such that for all $u\in H^{\frac{\sigma}{2}}(X)$, 
\bes
\|(-\Delta-|\rho|^2)^{\sigma/4}u\|_{L^2(X)}^2 \geq S\|u\|_{L^p(X)}^2.
\ees
\end{thm}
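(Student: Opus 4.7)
The plan is to recast the inequality as a boundedness statement for the negative fractional power. Setting $v=(-\Delta-|\rho|^2)^{\sigma/4}u$, so that $v\in L^2(X)$, the spectral theorem gives
\bes
u=(-\Delta-|\rho|^2)^{-\sigma/4}v=v\ast k_{\sigma/2},
\ees
where $k_{\sigma/2}$ is the bi-$K$-invariant Bessel--Green--Riesz kernel of order $\sigma/2$ associated with the shifted Laplacian. The claim then reduces to the convolution estimate
\bes
\|v\ast k_{\sigma/2}\|_{L^p(X)}\lesssim \|v\|_{L^2(X)}\quad\text{for all }v\in L^2(X).
\ees

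The next step is to invoke the sharp pointwise bounds of Anker--Ji \cite{AJ} for $k_{\sigma/2}$: near the identity, $k_{\sigma/2}$ exhibits the Euclidean-type singularity $\sim|x|^{\sigma/2-n}$, while for $|x|$ large it is controlled by a polynomial in $|x|$ times the Harish-Chandra elementary spherical function $\phi_0$, which itself decays as $(1+|x|)^{|\Sigma_0^+|}e^{-|\rho||x|}$. Combined with the polar-coordinate expression for the Haar measure on $X$ (whose density grows like $e^{2|\rho||x|}$ times a polynomial), these bounds show that the two hypotheses $\sigma<n$ and $\sigma<l+2|\Sigma_0^+|$ are precisely what is needed to place $k_{\sigma/2}$ in $L^r(X)$ for a suitable range of exponents $r$: the first controls the integrability of the local singularity, and the second reflects the ``pseudo-dimension'' of $X$ at infinity and controls the interaction between $\phi_0$ and the exponentially growing Haar measure.

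The final step is to translate this $L^r$-membership into the desired $L^2\to L^p$ bound for convolution with $k_{\sigma/2}$. For $p$ strictly less than the Sobolev exponent $\frac{2n}{n-\sigma}$, Young's inequality already yields the claim (with $\frac{1}{r}=\frac{1}{2}+\frac{1}{p}$) because of the exponential decay of $k_{\sigma/2}$ at infinity; the generalized Kunze--Stein phenomenon recalled in Remark \ref{rem-k-s} provides an even more flexible alternative. For the critical endpoint $p=\frac{2n}{n-\sigma}$ one splits $k_{\sigma/2}=k^{\mathrm{loc}}_{\sigma/2}+k^{\mathrm{tail}}_{\sigma/2}$ with $k^{\mathrm{loc}}_{\sigma/2}$ supported in a neighbourhood of the identity, handles $k^{\mathrm{loc}}_{\sigma/2}$ by a Hardy--Littlewood--Sobolev-type estimate (essentially Euclidean, since $X$ is locally flat), and handles the tail by Young, finally combining these to get the endpoint bound.

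The principal technical difficulty is the endpoint case $p=\frac{2n}{n-\sigma}$, where the local part of $k_{\sigma/2}$ is only of weak type so Young's inequality fails and one must transplant the Euclidean Hardy--Littlewood--Sobolev argument. A secondary delicate point is the sharpness of the condition $\sigma<l+2|\Sigma_0^+|$: this restriction, coming from the exponential volume growth of $X$, is exactly what permits the admissible range of $p$ to extend from the Sobolev endpoint $\frac{2n}{n-\sigma}$ all the way down to any $p>2$, a gain not available on $\R^n$. Careful bookkeeping of the parameters in the Anker--Ji estimate against the hypotheses of Young / Kunze--Stein is what makes the whole interval $(2,\frac{2n}{n-\sigma}]$ accessible.
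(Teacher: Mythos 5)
Your overall architecture is close to the paper's: reduce to a convolution bound for the Bessel--Green--Riesz kernel, split it into a local part and a tail, use the Anker--Ji estimates, and treat the Sobolev endpoint by a weak-type/HLS argument for the local singularity. Your $TT^*$-style reduction to the bound $\|v\ast k_{\sigma/2}\|_{L^p}\lesssim\|v\|_{L^2}$ (the paper instead proves $\|f\ast k_\sigma\|_{L^p}\lesssim\|f\|_{L^{p'}}$ and pairs) is legitimate. But there is a genuine gap in your treatment of the tail. For $|x|\geq 1$ one has $k_{\sigma/2}(x)\asymp |x|^{\sigma/2-l-2|\Sigma_0^+|}\phi_0(x)$, with $\phi_0(\exp H)\asymp\bigl\{\prod_{\alpha\in\Sigma_0^+}(1+\alpha(H))\bigr\}e^{-\rho(H)}$, while the density of the invariant measure grows like $e^{2\rho(H)}$. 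Hence $\int_{|x|\ge 1}\phi_0(x)^r\,dx$ contains the factor $e^{(2-r)\rho(H)}$ and diverges for every $r<2$; the tail of $k_{\sigma/2}$ lies in $L^r(X)$ only for $r\ge 2$ (at $r=2$ it is the hypothesis $\sigma<l+2|\Sigma_0^+|$ that saves the polynomial integral). Young's inequality for $L^2\ast L^r\to L^p$ requires $r=\tfrac{2p}{p+2}<2$, so it is never applicable to the tail: the ``exponential decay of $k_{\sigma/2}$ at infinity'' is exactly cancelled by the exponential volume growth, and this is precisely the non-Euclidean obstruction the theorem is about.

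The fix is the tool you relegate to a parenthetical aside: the generalized Kunze--Stein phenomenon of Remark \ref{rem-k-s}, used in its dual form (the kernel is symmetric, so $L^{p'}\ast L^2\subset L^2$ for $p'<2$ dualizes to $L^2\ast L^2\subset L^p$ for $p>2$), applied to the tail of $k_{\sigma/2}$, which belongs to $L^2(X)$ exactly under $\sigma<l+2|\Sigma_0^+|$. This must be the actual mechanism for the tail both in the subcritical range and at the endpoint $p=\tfrac{2n}{n-\sigma}$, not an ``alternative'' to a Young estimate that fails. The paper itself takes yet another route for the tail: a Stein complex-interpolation argument on the analytic family $T_zf=f\ast(k_\sigma^\infty)^{1+z}$, interpolating an $L^1\to L^\infty$ bound at $\Re z=-\tfrac12$ (where $(k_\sigma^\infty)^{1/2}$ is bounded) against an $L^2\to L^2$ bound at $\Re z=\epsilon>0$ (where the spherical transform of $(k_\sigma^\infty)^{1+\epsilon}$ is bounded because $\phi_0^{2+\epsilon}$ is integrable). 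Either repair closes your argument; as written, the tail estimate does not hold.
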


\section{Preliminaries}
In this section, we describe the necessary preliminaries regarding semisimple Lie groups and harmonic analysis on Riemannian symmetric spaces. These are standard and can be found, for example, in \cite{GV, H, H1, H2}. To make the article self-contained, we shall gather only those results which will be used throughout this paper.

\subsection{Notations}
Let $G$ be a connected, noncompact, real semisimple Lie group with finite centre and $\mathfrak g$ its Lie algebra. We fix a Cartan involution $\theta$ of $\mathfrak g$ and write $\mathfrak g = \mathfrak k \oplus \mathfrak p$ where $\mathfrak k$ and $\mathfrak p$ are $+1$ and $-1$ eigenspaces of $\theta$ respectively. Then $\mathfrak k$ is a maximal compact subalgebra of $\mathfrak g$ and $\mathfrak p$ is a linear subspace of $\mathfrak g$. The Cartan involution $\theta$ induces an automorphism $\Theta$ of the group $G$ and $K=\{g\in G\mid \Theta (g)=g\}$ is a maximal compact subgroup of $G$. Let $B$ denote the Cartan Killing form of $\mathfrak g$. It is known that $B\mid_{\mathfrak p\times\mathfrak p}$ is positive definite and hence induces an inner product and a norm $\| \cdot \|_B$ on $\mathfrak p$. The homogeneous space $X=G/K$ is a smooth manifold. The tangent space of $X$ at the point $o=eK$ can be naturally identified to $\mathfrak p$ and the restriction of $B$ on $\mathfrak p$ then induces a $G$-invariant Riemannian metric $ {\bf d}$ on $X$. For $x\in X$ and $r>0$, we denote ${\bf B}(x, r)$ to be the ball of radius $r$ centered at $x$ in this metric.

Let $\mathfrak a$ be a maximal subalgebra in $\mathfrak p$; then $\mathfrak a$ is abelian. We assume that $\dim \mathfrak a = l$, called the real rank of $G$.
We can identify $\mathfrak a$ endowed with the inner product induced from $\mathfrak p$ with $\mathbb{R}^d$ and let $\mathfrak{a}^*$ be the real dual of $\mathfrak{a}$. The set of restricted roots of the pair $(\mathfrak g, \mathfrak{a})$ is denoted by $\Sigma$.  It consists of all $\alpha \in \mathfrak{a}^*$ such that
\bes
\g_\alpha = \left\{X\in \g ~|~ [Y, X] = \alpha(Y) X, \:\: \txt{ for all } Y\in \mathfrak{a} \right\}
\ees
is nonzero with $m_\alpha = \dim(\g_\alpha)$. We choose a system of positive roots $\Sigma^+$ and with respect to $\Sigma^+$, the positive Weyl chamber
$\mathfrak{a}_+ = \left\{X\in \mathfrak{a} ~|~ \alpha(X)>0,\:\:  \txt{ for all } \alpha \in \Sigma^+\right\}$. We also let $\Sigma_0^+$ be the set of positive indivisible roots.
We denote by 
\bes
\mathfrak{n}= \oplus_{\alpha \in \Sigma^+}  ~ \mathfrak{g}_{\alpha}.
\ees 
Then $\mathfrak{n}$ is a nilpotent subalgebra of $\g$ and we obtain the Iwasawa decomposition $\g = \mathfrak{k} \oplus \mathfrak{a} \oplus \mathfrak{n}$. If $N=\exp \mathfrak{n}$ and $A= \exp \mathfrak{a}$ then $N$ is a Nilpotent Lie group and $A$ normalizes $N$. For the group $G$, we now have the Iwasawa decomposition 
$G= KAN$, that is, every $g\in G$ can be uniquely written as 
\bes
g=\kappa(g)\exp H(g)\eta(g), \:\:\:\: \kappa(g)\in K, H(g)\in \mathfrak{a}, \eta(g)\in N,
\ees 
and the map 
\bes
(k, a, n) \mapsto kan
\ees 
is a global diffeomorphism of $K\times A \times N$ onto $G$. Let $n$ be the dimension of $X$ then 
\bes
n=l+ \sum_{\alpha\in \Sigma^+} m_\alpha.
\ees 
We always assume that $n\geq 2$. Let $\rho$ denote the half sum of all positive roots counted with their
multiplicities:
\bes
\rho=\frac{1}{2}\sum_{\alpha\in \Sigma^+}m_{\alpha}~\alpha.
\ees 
It is known that the $L^2$-spectrum of the Laplace-Beltrami operator $\Delta$ on $X$ is the half-line $(-\infty, -|\rho|^2]$. Let $M'$ and $M$ be the normalizer and centralizer of $\mathfrak{a}$ in $K$ respectively.
Then $M$ is a normal subgroup of $M'$ and normalizes $N$. The quotient group $W = M'/M$ is a finite group, called the Weyl group of the pair $(\g, \mathfrak{k})$. The Weyl group $W$ acts on $\mathfrak{a}$ by the adjoint action. It is known that $W$ acts as a group of orthogonal transformations (preserving the Cartan-Killing form) on $\mathfrak{a}$. Each $w\in W$ permutes the Weyl chambers and the action of $W$ on the Weyl chambers is simply transitive. Let $A_+= \exp{\mathfrak{a_+}}$. Since $\exp: \mathfrak{a} \to A$ is an isomorphism we can identify $A$ with $\R^d$. If $\overline{A_+}$ denotes the closure of $A_+$ in $G$, then one has the polar decomposition $G=K A K$,
that is, each $g\in G$ can be written as 
\bes
g=k_1 (\exp Y) k_2, \:\:  k_1, k_2 \in K, Y\in \mathfrak{a}.
\ees 
In the above decomposition, the $A$ component of $\mathfrak{g}$ is uniquely determined modulo $W$. In particular, it is well defined in $\overline{A_+}$. The map $(k_1, a, k_2)\mapsto k_1ak_2$ of $K\times A\times K$ into $G$ induces a diffeomorphism of $K/M\times A_+\times K$ onto an open dense subset of $G$. We extend the inner product on $\mathfrak{a}$ induced by $B$ to $\mathfrak{a}^*$ by duality, that is, we set
\bes
\langle \la, \mu \rangle =B(Y_\la, Y_\mu), \:\:\:\: \la, \mu \in \mathfrak{a}^*,  ~ Y_\la, Y_\mu \in \mathfrak{a},
\ees
where $Y_\la$ is the unique element in $\mathfrak{a}$ such that 
\bes
\la(Y) = B(Y_\la, Y), \:\:\:\: \txt{ for all } Y\in \mathfrak{a}.
\ees
This inner product induces a norm, again denoted by $|\cdot|$, on $\mathfrak{a}^*$,
\bes
|\la| = \langle \la, \la \rangle^{\frac{1}{2}}, \:\:\:\: \la \in \mathfrak{a}^*.
\ees
The elements of the Weyl group $W$ acts on $\mathfrak a^*$ by the formula
\bes
sY_{\la}=Y_{s\la},\:\:\:\:\:\:s\in W,\:\la\in\mathfrak a^*.
\ees
Let $\mathfrak{a}_\C^*$ denote the complexification of $\mathfrak{a}^*$, that is, the set of all complex-valued real linear functionals on $\mathfrak{a}$. The usual extension of $B$ to $\mathfrak{a}_\C^*$, using conjugate linearity is also denoted by $B$. Through the identification of $A$ with $\R^d$, we use the Lebesgue measure on $\R^d$ as the Haar measure $da$ on $A$. As usual on the compact group $K$, we fix the normalized Haar measure $dk$ and $dn$ denotes a Haar measure on $N$. The following integral formulae describe the Haar measure of $G$ corresponding to the Iwasawa and polar decomposition respectively.
For any $f\in C_c(G)$,
\beas
\int_{G}{f(g)dg} &=& \int_K \int_{\mathfrak{a}}\int_N f(k\exp Y n)~e^{2\rho(Y)}~dn~dY~dk \\ 
&=&\int_{K}{\int_{\overline{A_+}}{\int_{K}{f(k_1ak_2) ~ J(a)~dk_1~da~dk_2}}},
\eeas
where $dY$ is the Lebesgue measure on $\R^d$ and for $H\in \overline{\mathfrak{a}_+}$
\be \label{J-est}
J(\exp H)= c \prod_{\alpha\in \Sigma^+}\left(\sinh\alpha(H)\right)^{m_{\alpha}}  \asymp \left\{\prod_{\alpha\in \Sigma^+}\left(\frac{\alpha(H)}{1 + \alpha(H)}\right)^{m_\alpha} \right\} e^{2\rho(H)},
\ee
where $c$ (in the equality above) is a normalizing constant. 
If $f$ is a function on $X= G/K$ then $f$ can be thought of as a function on $G$ which is right invariant under the action of $K$. It follows that on $X$ we have a $G$ invariant measure $dx$ such that 
\be \label{integral-X}
\int_X f(x)~dx= \int_{K/M}\int_{\mathfrak{a}_+}f(k\exp Y)~J(\exp Y)~dY~dk_M,
\ee
where $dk_M$ is the $K$-invariant measure on $K/M$.  

\subsection{Fourier analysis on $X$}
For a sufficiently nice function $f$ on $X$, its Fourier transform $\widetilde{f}$ is a function defined on $\mathfrak{a}_{\C}^* \times K$ given by 
\bes 
\widetilde{f}(\la,k) = \int_{G} f(g) e^{(i\la - \rho)H(g^{-1}k)} dg,\:\:\:\:\:\: \la \in \mathfrak{a}_{\C}^*,\:\: k \in K, 
\ees
whenever the integral exists \cite[P. 199]{H1}. 
As $M$ normalizes $N$ the function $k\mapsto\widetilde{f}(\la, k)$ is right $M$-invariant.
It is known that if $f\in L^1(X)$ then $\widetilde{f}(\la, k)$ is a continuous function of $\la \in \mathfrak{a}^*$, for almost every $k\in K$ (in fact, holomorphic in $\lambda$ on a domain containing $\mathfrak a^\ast$). If in addition, $\widetilde{f}\in L^1(\mathfrak{a}^*\times K, |{\bf c}(\la)|^{-2}~d\la~dk)$ then the following Fourier inversion holds,
\bes
f(gK)= |W|^{-1}\int_{\mathfrak{a}^*\times K}\widetilde{f}(\la, k)~e^{-(i\la+\rho)H(g^{-1}k)} ~ |{\bf c}(\la)|^{-2}d\la~dk,
\ees
for almost every $gK\in X$ \cite[Chapter III, Theorem 1.8, Theorem 1.9]{H1}. Here ${\bf c}(\la)$ denotes Harish Chandra's ${\bf c}$-function. Moreover, $f \mapsto \widetilde{f}$ extends to an isometry of $L^2(X)$ onto $L^2(\mathfrak{a}^*_+\times K, |{\bf c}(\la)|^{-2}~d\la~dk )$ \cite[Chapter III, Theorem 1.5]{H1}:
\bes
\int_X |f(x)|^2~dx= |W|^{-1} \int_{\mathfrak a^\ast \times K} |\widetilde f(\lambda, k)|^2~|{\bf c}(\lambda)|^{-2}~d\lambda~dk.
\ees
It is known \cite[Ch. IV, prop 7.2]{H2} that there exists a positive number $C$ and $d\in \N$ such that for all $\lambda\in\mathfrak a_+^*$ 
\bea \label{clambdaest}
|{\bf c}(\la)|^{-2} &\leq& C(1+ |\la|)^{n-l}, \:\: \textit{ for } |\lambda|\geq 1;\\
&\leq& C(1+ |\la|)^{d}, \:\: \textit{ for } |\lambda|<1 \nonumber.
\eea
We now specialize in the case of $K$-biinvariant function $f$ on $G$. Using the polar decomposition of $G$ we may view a $K$-biinvariant integrable function $f$ on $G$ as a function on $A_+$, or by using the inverse exponential map we may also view $f$ as a function on $\mathfrak{a}$ solely determined by its values on $\mathfrak{a}_+$. Henceforth, we shall denote the set of $K$-biinvariant functions in $L^1(G)$ by $L^1(K \backslash G/K)$.
If $f\in L^1(K \backslash G/K)$ then the Fourier transform $\widetilde{f}$ reduces to the  spherical Fourier transform $\widehat f(\la)$ which is given by the integral 
\be \label{defn-sft}
\widetilde{f}(\lambda, k)=\widehat f(\la):= \int_Gf(g)\phi_{-\la}(g)~dg,
\ee 
for all $k\in K$ where 
\be \label{philambda} 
 \phi_\la(g) 
= \int_K e^{-(i\la+ \rho) \big(H(g^{-1}k)\big)}~dk,\:\:\:\:\:\:\la \in \mathfrak{a}_\C^*,  
\ee
is Harish Chandra's elementary spherical function.
We now list down some well-known properties of the elementary spherical functions which are important for us (\cite[Prop. 2.2.12]{AJ},  \cite[Prop. 3.1.4]{GV}; \cite[ Lemma 1.18, P. 221]{H1}).
\begin{thm} \label{thm-phi}
\begin{enumerate}
\item[(1)] $\phi_\la(g)$ is $K$-biinvariant in $g\in G$ and $W$-invariant in $\la\in \mathfrak{a}_\C^*$.
\item[(2)] $\phi_\la(g)$ is $C^\infty$ in $g\in G$ and holomorphic in $\la\in \mathfrak{a}_\C^*$.
\item[(3)] The elementary spherical function $\phi_0$ satisfies the following global estimate:
\be \label{estimate-phi0}
 \phi_0(\exp H)\asymp \left\{\prod_{\alpha\in \Sigma_0^+} \left(1 + \alpha(H)\right) \right\}  e^{-\rho(H)}, \:\:\text{ for all } H\in \overline{\mathfrak a^+}.
\ee
\item[(4)] For all $\la\in \overline{\mathfrak{a}_+^*}$ we have
\be
|\phi_\la(g)| \leq  \phi_0(g)\leq 1.\label{phi0}
\ee

\end{enumerate}
\end{thm}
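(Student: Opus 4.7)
My plan is to treat items (1), (2), (4) directly from the integral representation \eqref{philambda}, and to reduce item (3) to the sharp estimate of Anker--Ji already cited alongside the statement.

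For item (1), right $K$-invariance of $g \mapsto \phi_\la(g)$ is immediate from the fact that for $k_0\in K$ the Iwasawa $\mathfrak{a}$-component satisfies $H(k_0^{-1}g^{-1}k) = H(g^{-1}k)$, since the left $K$-factor is absorbed into $\kappa$. Left $K$-invariance follows from left-invariance of Haar measure on $K$ after the substitution $k \mapsto k_0 k$. For $W$-invariance in $\la$, I would appeal to Harish-Chandra's characterization: $\phi_\la$ is the unique $K$-biinvariant smooth function on $G$ satisfying $D\phi_\la = \chi_\la(D)\phi_\la$ for every $G$-invariant differential operator $D$ on $X$, normalized by $\phi_\la(e)=1$. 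The eigencharacter $\chi_\la$ factors through the Weyl-group invariant polynomials on $\mathfrak{a}_\C^*$, so $\chi_{s\la} = \chi_\la$ and uniqueness forces $\phi_{s\la} = \phi_\la$ for all $s \in W$.

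Item (2) is a standard differentiation-under-the-integral argument. Since the Iwasawa projection $(g,k)\mapsto H(g^{-1}k)$ is real-analytic jointly in $(g,k)$ and the integrand in \eqref{philambda} is entire and uniformly bounded in $k$ on compact subsets of $\mathfrak{a}_\C^*$, dominated convergence gives smoothness in $g$ (by differentiating the integrand) and Morera's theorem combined with Fubini gives holomorphy in $\la$. For item (4), I would split the two inequalities. For real $\la\in\overline{\mathfrak{a}_+^*}$ one has $|e^{-i\la H(g^{-1}k)}|=1$, so the triangle inequality applied inside \eqref{philambda} yields
\bes
|\phi_\la(g)| \;\leq\; \int_K e^{-\rho(H(g^{-1}k))}\, dk \;=\; \phi_0(g).
\ees
The upper bound $\phi_0(g)\leq 1$ I would obtain by realizing $\phi_0$ as the matrix coefficient $\langle \pi_0(g) v_K, v_K\rangle$ of the unitary spherical principal series at parameter $0$ on the $K$-fixed unit vector $v_K = \mathbf{1} \in L^2(K/M)$; Cauchy--Schwarz in the Hilbert space of $\pi_0$ then gives $|\phi_0(g)|\leq \|v_K\|^2 = 1$, and $\phi_0 > 0$ as an integral of a positive function, so $\phi_0(g)\leq 1$.

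The genuine obstacle is item (3), the \emph{sharp two-sided} asymptotic
\bes
\phi_0(\exp H) \;\asymp\; \Big\{ \prod_{\alpha\in\Sigma_0^+}(1+\alpha(H)) \Big\}\, e^{-\rho(H)}, \qquad H\in \overline{\mathfrak{a}_+}.
\ees
A one-sided upper bound of the correct shape can be extracted from \eqref{philambda} by a Bruhat-type decomposition of $K$ into charts adapted to $\Sigma_0^+$ together with Harish-Chandra's series expansion for large $H$, while near $H=0$ one uses $\phi_0(e)=1$ and smoothness. The matching lower bound, uniformly up to and into the walls of the Weyl chamber, is delicate: it is exactly what Anker--Ji establish by a rank-one reduction along each indivisible positive root combined with the Trombi--Varadarajan estimates and uniform control of the $\mathbf{c}$-function. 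Rather than reprove this, I would invoke \cite[Proposition 2.2.12]{AJ} directly for (3) and view the present theorem as a packaging of these four facts for later use.
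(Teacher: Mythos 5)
Your proposal is correct, and in fact it does more than the paper does: the paper offers no proof of this theorem at all, presenting it explicitly as a list of well-known facts with citations to \cite[Prop.~2.2.12]{AJ}, \cite[Prop.~3.1.4]{GV} and \cite[Lemma~1.18, p.~221]{H1}. Your arguments for (1), (2) and (4) are precisely the classical proofs found in those sources: the Iwasawa projection satisfies $H(k_0 x)=H(x)$, which together with invariance of Haar measure on $K$ gives $K$-biinvariance; $W$-invariance via the Harish-Chandra characterization of $\phi_\la$ as the normalized joint eigenfunction with eigencharacter $\chi_\la$ factoring through $W$-invariant polynomials is the standard route; differentiation under the integral plus Morera--Fubini gives (2); and for (4) the bound $|\phi_\la(g)|\leq\phi_0(g)$ for real $\la$ is the triangle inequality in \eqref{philambda}, while $\phi_0\leq 1$ via unitarity of the class-one principal series at parameter $0$ (Cauchy--Schwarz on the $K$-fixed unit vector, i.e.\ positive-definiteness of $\phi_0$) is correct. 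A marginally more elementary alternative for the last step: by Cauchy--Schwarz, $\phi_0(g)\leq\bigl(\int_K e^{-2\rho(H(g^{-1}k))}\,dk\bigr)^{1/2}=\phi_{-i\rho}(g)^{1/2}=1$, since $\phi_{-i\rho}=\phi_{i\rho}\equiv 1$ by the $W$-invariance you established in (1) (the longest Weyl element sends $\rho$ to $-\rho$). You correctly identify (3), the global two-sided estimate uniform up to the chamber walls, as the only deep ingredient, and your decision to invoke \cite[Prop.~2.2.12]{AJ} rather than reprove it matches exactly what the paper does.
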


\subsection{Function spaces on $X$}
For $1\leq p< \infty$ we define
\bes
L^p(X \times \R)=\left\{u~|~ \|u\|_{L^p(X \times \R)}^p:= \int_{X\times \R} |u(x, y)|^p~dx~dy < \infty\right\},
\ees
and $L^p(X \times \R_+)$ to be the subspace of $L^p(X \times \R)$ consisting of all functions $u(x, y)$ which are even in the $y$-variable. We also define $L^\infty(X \times \R^+)$ analogously. 
For $\sigma>0$, the  Sobolev space of order $\sigma$ on $X$ is defined by
\bes
H^\sigma(X)= \big\{f\in L^2(X) ~|~ \|f\|_{H^\sigma(X)}^2:= \int_{\mathfrak a^\ast \times K} |\tilde f(\lambda, k)|^2 ~(|\lambda|^2+|\rho|^2)^{\sigma}~|{\bf c}(\lambda)|^{-2}~d\lambda~dk< \infty \big\}.
\ees 
Similarly, for $\sigma>0$ we  define  $H^\sigma(X \times \R)$ as the space of all functions $u\in L^2(X \times \R)$ such that
\bes
\|u\|_{H^\sigma(X \times \R)}^2:= \int_{\R}\int_{\mathfrak a^\ast \times K} |\mathcal F\left(\tilde u(\lambda, k, \cdot)(\xi)\right)|^2 ~\left(|\lambda|^2+|\rho|^2+\xi^2\right)^{\sigma}~|{\bf c}(\lambda)|^{-2}~d\lambda~dk~d\xi< \infty,
\ees
where $\mathcal F \tilde u(\lambda, k, \cdot)(\xi)$ denotes the Euclidean Fourier transform of the function $y \mapsto \tilde u(\lambda, k, y)$ at the point $\xi\in \R$, for almost every $(\lambda, k)\in
\mathfrak a^\ast \times K$. Let $H^\sigma(X \times \R_+)$ be the subspace of $H^\sigma(X \times \R)$ consisting of all elements $u(x, y)$ which are even in the $y$-variable.

\subsection{Heat kernel on $X$}
For the details of the heat kernel $h_t$ on $X= G/K$ we refer \cite{Anker, AJ}. It is a  family $\{h_t: ~ t>0\}$ of smooth functions with the following properties:
\begin{enumerate}
\item[(a)] $h_t \in L^p(K\backslash G /K), ~~ p\in [1, \infty]$, ~ for each $t>0$.
\item[(b)] For each $t>0$, $h_t$ is positive with 
\be\label{prop-ht}
\int_{G} h_t(g)~ dg = 1.
\ee
\item[(c)] $h_{t+s}= h_t*h_s, ~ t, s>0.$
\item[(d)] For each $f\in L^p(G/K), ~ p\in [1, \infty)$ the function  $u(x, t)= f*h_t(x)$, for $x\in X$
solves the heat equation
\beas
\Delta_x u(x, t)&=& \frac{\partial}{\partial t}u(x, t)\\
u(\cdot, t) &\ra & f \txt{ in } L^p(X), \txt{ as } t \ra 0.
\eeas
\item[(e)] The spherical Fourier transform of $h_t$ is given by 
\be \label{ht-ft}
\widehat{h_t}(\lambda) = e^{-t(|\lambda|^2 + |\rho|^2)}, \:\:\:\:  \lambda \in \mathfrak{a}^*.
\ee 
\end{enumerate} 

We need the following both side estimates of the heat kernel \cite[Theorem 3.7]{AJ}.
\begin{thm}\label{est-heatkernel-bothside}
Let $\kappa$ be an arbitrary positive number. Then there exists positive constants $C_1, C_2$ (depending on $\kappa$) such that 
\bes
C_1\leq \frac{h_t(\exp H)}{t^{-\frac{n}{2}}(1+t)^{\frac{n-l}{2}-|\Sigma_0^{+}|} \left\{\prod_{\alpha\in \Sigma_0^{+}}(1+ \alpha(H)\right\}e^{-|\rho|^2t-\rho(H)- \frac{|H|^2}{4t}}} \leq C_2,
\ees
for all $t>0$, and $H\in \overline{\mathfrak a^+}$, with $|H|\leq \kappa (1+t)$.
\end{thm}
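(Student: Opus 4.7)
The plan is to derive the two-sided estimate from the inverse spherical Fourier transform combined with a contour-shift argument that completes the square, as in the original Anker--Ji approach. Using property (e), namely $\widehat{h_t}(\lambda)=e^{-t(|\lambda|^2+|\rho|^2)}$, the inversion formula for $K$-biinvariant functions gives
\bes
h_t(\exp H) \;=\; \frac{1}{|W|}\int_{\mathfrak{a}^*} e^{-t(|\lambda|^2+|\rho|^2)}\,\phi_\lambda(\exp H)\, |{\bf c}(\lambda)|^{-2}\,d\lambda.
\ees
The first step is to complete the square in $\lambda$. Since $\phi_\lambda(\exp H)$ is holomorphic in $\lambda\in\mathfrak{a}_\C^*$ (Theorem \ref{thm-phi}(2)) and $|{\bf c}(\lambda)|^{-2}={\bf c}(\lambda)^{-1}{\bf c}(-\lambda)^{-1}$ extends meromorphically with poles away from a neighbourhood of $\mathfrak{a}^*$, I would shift the contour from $\mathfrak{a}^*$ to $\mathfrak{a}^*-iH/(2t)$. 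This shift pulls out the Gaussian factor $e^{-|H|^2/(4t)}$ and reduces the problem to estimating
\bes
e^{-|\rho|^2t-|H|^2/(4t)}\int_{\mathfrak{a}^*} e^{-t|\lambda|^2}\,\phi_{\lambda-iH/(2t)}(\exp H)\,\bigl|{\bf c}(\lambda-iH/(2t))\bigr|^{-2}\,d\lambda.
\ees

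The second and heaviest step is to obtain matching upper and lower bounds for $\phi_\mu(\exp H)$ when $\mu=\lambda-iH/(2t)$ with $\lambda\in\mathfrak{a}^*$. Using the Harish-Chandra series $\phi_\mu=\sum_{w\in W}{\bf c}(w\mu)\Phi_{w\mu}$ (where $\Phi_\mu$ has leading exponential $e^{(i\mu-\rho)(H)}$), one factors
\bes
\phi_{\lambda-iH/(2t)}(\exp H)\;\asymp\; e^{i\lambda(H)}\,e^{-\rho(H)}\cdot\bigl|{\bf c}(-iH/(2t))\bigr|\cdot(\text{bounded oscillatory factor})
\ees
for $H\in\overline{\mathfrak{a}_+}$ with $|H|\le \kappa(1+t)$. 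Combining this factor of $|{\bf c}(-iH/(2t))|$ with the global behaviour (\ref{estimate-phi0}) of $\phi_0$ produces exactly the polynomial prefactor $e^{-\rho(H)}\prod_{\alpha\in\Sigma_0^+}(1+\alpha(H))$ appearing in the statement. The remaining integral in $\lambda$ is then essentially Gaussian, weighted by the shifted Plancherel density; using the polynomial bounds (\ref{clambdaest}) on $|{\bf c}(\lambda-iH/(2t))|^{-2}$ and a change of variables $\lambda\mapsto \lambda/\sqrt{t}$, one obtains the factor $t^{-l/2}$ times additional powers of $t$ arising from the homogeneous degree of $|{\bf c}|^{-2}$ at infinity and its behaviour near the walls; balancing these gives the mixed factor $t^{-n/2}(1+t)^{(n-l)/2-|\Sigma_0^+|}$.

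Finally, one assembles all factors and verifies that the dependence on the regime $|H|\le\kappa(1+t)$ is controlled uniformly, so that the upper and lower bounds differ only by the absolute constants $C_1,C_2$. The main obstacle is Step 2: the estimate on $\phi_\mu$ must be uniform in $H$ up to the walls of $\overline{\mathfrak{a}_+}$, where the individual terms in the Harish-Chandra expansion develop singularities that cancel in the Weyl-symmetric sum. The standard remedy is to invoke Anker's reduction to rank-one estimates (reducing each $\alpha$-direction to an $SL(2,\R)$ or $SU(2,1)$ computation) and then patching with the $W$-invariance of $\phi_\lambda$. Once this uniform estimate is in hand, both the upper and lower bounds in Theorem \ref{est-heatkernel-bothside} follow from the same Gaussian integration, since the leading factors are strictly positive on the relevant range of $(H,t)$.
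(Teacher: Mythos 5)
The paper does not prove this statement at all: it is quoted directly from Anker--Ji \cite[Theorem 3.7]{AJ}, so there is no in-paper proof to compare against. Judged on its own terms, your sketch follows the right general line for the \emph{upper} bound (spherical inversion, completing the square via a contour shift, and the Harish-Chandra expansion of $\phi_\lambda$), which is indeed the Anker/Anker--Ji machinery. But as a proof of the two-sided estimate it has two genuine gaps.

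First, the contour shift to $\mathfrak{a}^*-iH/(2t)$ is not innocent in the stated range. Since the theorem is claimed for all $t>0$ with $|H|\le\kappa(1+t)$, the shift parameter $|H|/(2t)$ is unbounded as $t\to 0$, so the shifted contour can cross arbitrarily many poles of the meromorphic continuation of $\mathbf{c}(\lambda)^{-1}\mathbf{c}(-\lambda)^{-1}$, and the individual terms $\mathbf{c}(w\mu)\Phi_{w\mu}$ of the Harish-Chandra series are singular precisely on the walls of $\overline{\mathfrak{a}_+}$, where their Weyl-symmetric sum is regular. You acknowledge this as ``the main obstacle'' but dispose of it by invoking ``Anker's reduction to rank one''; that reduction is itself the bulk of the work and cannot be treated as a black box for a uniform two-sided bound in this range. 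Second, and more seriously, the lower bound does not ``follow from the same Gaussian integration.'' After the shift the integrand still carries the oscillatory factor $e^{i\lambda(H)}$, and a lower bound for an oscillatory integral requires an asymptotic expansion with an explicitly dominated error term, or an altogether different argument (positivity of $h_t$, the semigroup property, and Harnack/chaining-type comparisons, which is essentially how Anker--Ji obtain their lower bound). Asserting that ``the leading factors are strictly positive'' controls neither the sign nor the size of the full integral. So your proposal is a reasonable road map for the upper bound but does not establish the two-sided estimate; in the context of this paper the correct move is simply to cite \cite[Theorem 3.7]{AJ}, as the authors do.
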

For $H\in \overline{\mathfrak a^+}$ with $t \ll H$, we will use the following global upper bound \cite[Theorem 3.1]{Anker}
\be \label{est-ht}
|h_t(\exp H)|\leq t^{-d_1}(1+|H|)^{d_2}e^{-|\rho|^2t -\rho(H)-|H|^2/(4t)},
\ee
where $d_1$ and $d_2$ are positive constants depending on the position of $H \in \overline{\mathfrak a^+}$
with respect to the walls and on the relative size of $t > 0$ and $1 + |H|$.

\section{Extension problem and kernel estimates}
 Since we are dealing with fractional operators, it is natural to relate the fractional Laplace-Beltrami operator acting on $f$ to the solution $u$ in (\ref{extn1}). We proceed by proving Theorem \ref{soln-extn-possion} which will provide us an expression for the Poisson kernel of the extension operator. This will crucially be used throughout this paper.
\begin{proof}[Proof of Theorem \ref{soln-extn-possion}]
Using the heat-diffusion semigroup generated by the Laplace-Beltrami operator, the first part of the theorem follows exactly as in \cite[Theorem 1.1]{ST}. We will prove the second part. Let $f\in Dom(-\Delta)^\sigma$, and $u$ be the solution of the extension problem (\ref{extn1}) given by equation (\ref{expression-u-1}).  It now follows that
\begin{equation}
\nonumber
\left\langle u(\cdot, y), g\right\rangle=\frac{1}{\Gamma(\sigma)}\int_{|\rho|^2}^\infty \int_0^\infty e^{-t\lambda} \lambda^\sigma e^{-y^2/4t} \frac{dt}{t^{1-\sigma}}\, dE_{f, g}(\lambda),
\end{equation}
for all $g\in L^2(X)$, where  $dE_{f, g}(\lambda)$ is the regular Borel complex measure of bounded variation concentrated on the spectrum $[|\rho|^2, \infty)$ of $-\Delta$ with $d|E_{f, g}|(|\rho|^2, \infty) \leq \|f\|_{L^2(X)} \|g\|_{L^2(X)}$.
By the Fubini's theorem, putting $r\lambda=y^2/4t$ the above equation yields 
\beas
\left\langle u(\cdot, y), g\right\rangle &=& \frac{y^{2\sigma}}{4^\sigma \Gamma(\sigma)}\int_0^\infty   \int_{|\rho|^2}^\infty e^{-y^2/4r}~e^{-r\lambda} \,dE_{f, g}(\lambda) \frac{dr}{r^{1+\sigma}}\\
&=& \frac{y^{2\sigma}}{4^\sigma \Gamma(\sigma)}\int_0^\infty \left\langle e^{r\Delta} f, g\right\rangle_{L^2(X)} e^{-y^2/4r} \frac{dr}{r^{1+\sigma}} \\ \\
&=& \left\langle \frac{y^{2\sigma}}{4^\sigma \Gamma(\sigma)}\int_0^\infty e^{r\Delta}f~ e^{-y^2/4r} \frac{dr}{r^{1+\sigma}}, g\right\rangle.
\eeas
This proves that 
\bes 
u(x, y)=\frac{y^{2\sigma}}{4^\sigma \Gamma(\sigma)}\int_0^\infty e^{r\Delta}f(x) e^{-y^2/4r} \frac{dr}{r^{1+\sigma}}.
\ees
Now, using $e^{t\Delta}f= f\ast h_t$   
and Fubini's theorem, we get from the above equation that 
\bes
u(x, y)= \frac{y^{2\sigma}}{4^\sigma \Gamma(\sigma)} \int_X \int_0^\infty f(x z^{-1})~h_t(z) e^{-y^2/4t} \,\frac{dt}{t^{1+\sigma}}~dz=f \ast P_y^\sigma(x),
\ees
where the kernel $P_y^\sigma$ is given by the equation (\ref{defn-P}).
\end{proof}
As in \cite[Theorem 2.1]{ST}, we have the following consequences of the theorem above.
\begin{cor} 
Let $u(x, y)= (f \ast P_y^\sigma)(x)$, for $x\in X, y>0$ be the solution of the extension problem (\ref{extn1}) given in Theorem \ref{soln-extn-possion}. Then
\begin{enumerate}
\item[(a)] $\sup_{y\geq 0} |u(x, y)|\leq \sup_{t\geq 0} |f \ast h_t|$ in $X$.
\item[(b)] $\|u(\cdot, y)\|_{L^p(X)}\leq \|f\|_{L^p(X)}$, ~ for all $y\geq 0$ and $p\in [1, \infty]$.
\item[(c)] $\lim_{y\ra 0^+} u(\cdot, y)=f$ in $L^p(X)$,  for $p\in [1, \infty)$.
\end{enumerate}
\end{cor}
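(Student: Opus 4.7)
The starting point is the explicit formula
\[
u(x,y) = \frac{y^{2\sigma}}{4^\sigma \Gamma(\sigma)}\int_0^\infty (f\ast h_t)(x)\, e^{-y^2/4t}\,\frac{dt}{t^{1+\sigma}}
\]
obtained in Theorem \ref{soln-extn-possion}, together with the key preliminary fact that $P_y^\sigma$ is a probability density on $X$ for every $y>0$. The plan is first to establish this normalization, then to deduce (a), (b), (c) as almost immediate consequences.

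\textbf{Step 1: Normalization.} I would first compute
\[
\frac{y^{2\sigma}}{4^\sigma\Gamma(\sigma)}\int_0^\infty e^{-y^2/4t}\,\frac{dt}{t^{1+\sigma}} = 1
\]
by the substitution $s = y^2/(4t)$, which converts the integral to $\Gamma(\sigma)$. Combined with the positivity of $h_t$ and the identity $\int_G h_t(g)\,dg = 1$ (property (\ref{prop-ht}) of the heat kernel), Fubini then gives
\[
P_y^\sigma \geq 0 \quad \text{and} \quad \int_X P_y^\sigma(x)\,dx = 1.
\]

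\textbf{Step 2: Assertions (a) and (b).} For (a), since the measure $\frac{y^{2\sigma}}{4^\sigma\Gamma(\sigma)}e^{-y^2/4t}t^{-1-\sigma}\,dt$ is a probability measure on $(0,\infty)$, we can pull the pointwise supremum outside the integral:
\[
|u(x,y)| \leq \bigl(\sup_{t\geq 0}|(f\ast h_t)(x)|\bigr)\cdot \frac{y^{2\sigma}}{4^\sigma\Gamma(\sigma)}\int_0^\infty e^{-y^2/4t}\,\frac{dt}{t^{1+\sigma}} = \sup_{t\geq 0}|(f\ast h_t)(x)|.
\]
For (b), since $h_t$ is a probability density, Young's inequality gives $\|f\ast h_t\|_{L^p(X)} \leq \|f\|_{L^p(X)}$ for every $p\in[1,\infty]$ and $t>0$. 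Applying Minkowski's integral inequality to the formula for $u(\cdot,y)$ and then the normalization of Step 1 yields $\|u(\cdot,y)\|_{L^p(X)}\leq \|f\|_{L^p(X)}$.

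\textbf{Step 3: Assertion (c).} Using $\int P_y^\sigma = 1$, I would write
\[
u(\cdot,y) - f = \frac{y^{2\sigma}}{4^\sigma\Gamma(\sigma)}\int_0^\infty \bigl(f\ast h_t - f\bigr)\, e^{-y^2/4t}\,\frac{dt}{t^{1+\sigma}},
\]
apply Minkowski's integral inequality, and change variables $t = y^2/(4s)$ to rewrite the bound as
\[
\|u(\cdot,y) - f\|_{L^p(X)} \leq \frac{1}{\Gamma(\sigma)}\int_0^\infty \|f\ast h_{y^2/4s} - f\|_{L^p(X)}\, e^{-s}\, s^{\sigma-1}\,ds.
\]
By the heat kernel property (d), $\|f\ast h_{y^2/4s} - f\|_{L^p(X)} \to 0$ as $y\to 0^+$ for each fixed $s>0$ and $p\in[1,\infty)$, while the integrand is dominated by $2\|f\|_{L^p(X)}\, e^{-s}\, s^{\sigma-1}$, which is integrable. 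Dominated convergence then finishes the proof.

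The argument is essentially a transfer of the corresponding Euclidean/semigroup argument of \cite{ST}; no step is really an obstacle, but one should be careful to verify the normalization in Step 1 since every subsequent estimate relies on $P_y^\sigma$ being an $L^1$-normalized nonnegative kernel, and to note that (c) genuinely fails at $p=\infty$ (which is why that range is excluded), reflecting the usual lack of strong $L^\infty$-continuity of the heat semigroup.
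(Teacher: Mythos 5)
Your proposal is correct and follows essentially the same route as the paper: both rest on the formula $u(\cdot,y)=f\ast P_y^\sigma$ with the weight $\frac{y^{2\sigma}}{4^\sigma\Gamma(\sigma)}e^{-y^2/4t}t^{-1-\sigma}\,dt$ being a probability measure, using the $L^p$-contractivity of $e^{t\Delta}$ with Minkowski's integral inequality for (a)--(b) and dominated convergence with $\|f\ast h_t-f\|_{L^p}\leq 2\|f\|_{L^p}$ for (c). The only difference is that you spell out the normalization and the substitution $t=y^2/(4s)$ that makes the dominated convergence step in (c) fully explicit, which the paper leaves implicit.
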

\begin{proof}
Part (a) follows from the expression of the Poisson kernel given in equation (\ref{defn-P}). For (b), we observe that $e^{t\Delta}$ has the contraction property in $L^p(X)$. Hence, by equation (\ref{defn-P}) and Minkowski's integral inequality it follows that  
\beas
\|u(\cdot, y)\|_{L^p(X)}\leq  \frac{y^{2\sigma}}{4^\sigma \Gamma(\sigma)}\int_{0}^\infty \|f \ast h_t\|_{L^p(X)}~e^{-y^2/4t}~\frac{dt}{t^{1+\sigma}}\leq \|f\|_{L^p(X)}.
\eeas
Similarly, for part (c) we observe that
\bes
\|u(\cdot, y)- f\|_{L^p(X)}\leq  \frac{y^{2\sigma}}{4^\sigma \Gamma(\sigma)}\int_{0}^\infty \|f \ast h_t-f\|_{L^p(X)}~e^{-y^2/4t}~\frac{dt}{t^{1+\sigma}}.
\ees
Since $\|f \ast h_t-f\|_{L^p(X)}\leq 2\|f\|_{L^p(X)}$, using dominated convergence theorem the result follows from the fact  that $\lim_{t \ra 0^+} f \ast h_t= f$ in $L^p(X)$, for $p\in [1, \infty)$. 
\end{proof}

For $0< \sigma< 1$ and $y>0$, let us define the function $P_y^{-\sigma}$ given by the equation (\ref{defn-P}), that is
\bes
P_y^{-\sigma}(x)=\frac{y^{-2\sigma}}{4^{-\sigma} \Gamma(-\sigma)}\int_{0}^{\infty} h_t(x)~e^{-y^2/4t}~\frac{dt}{t^{1-\sigma}},\:\: \textit{ for } x\in X.
\ees
By the estimate of the heat kernel (Theorem \ref{est-heatkernel-bothside}), it follows that $P_y^{-\sigma}$ is well defined. For $0< \sigma<1$,  we observe that $\Gamma(-\sigma):= \frac{\Gamma(1-\sigma)}{-\sigma}<0$ and hence $P_y^{-\sigma}\leq 0$. Since the heat kernel $h_t$ is $K$-biinvariant so is the function $P_y^{-\sigma}$. By (\ref{defn-sft}) the spherical Fourier transform is given by
\be \label{p-h-ft}
\widehat{P_y^{-\sigma}}(\lambda)=\int_X P_y^{-\sigma}(x)~\phi_{-\lambda}(x)~dx
=\frac{y^{-2\sigma}}{4^{-\sigma} \Gamma(-\sigma)}\int_{0}^{\infty} \widehat{h_t}(\lambda)~e^{-y^2/4t}~\frac{dt}{t^{1-\sigma}},\:\: \textit{ for } \lambda\in \mathfrak a^\ast.
\ee
Interchange of the integration is possible by the Fubini's theorem. Indeed, by (\ref{phi0}) and (\ref{ht-ft})
\beas
\int_{0}^\infty \int_X  h_t(x)~|\phi_{-\lambda}(x)|~dx ~e^{-y^2/4t}~\frac{dt}{t^{1-\sigma}} &\leq& \int_{0}^\infty \int_X  h_t(x)~\phi_0(x)~dx ~e^{-y^2/4t}~\frac{dt}{t^{1-\sigma}}\\
&=& \int_{0}^\infty e^{-t|\rho|^2}~e^{-y^2/4t}~\frac{dt}{t^{1-\sigma}}<\infty.
\eeas 
Moreover, $P_y^{-\sigma}$ is contained in the Sobolev space $H^\sigma(X)$.  Indeed, by using (\ref{p-h-ft}), (\ref{ht-ft}) and Minkowski's integral inequality we get that  
\beas
&&\|P_y^{-\sigma}\|_{H^\sigma(X)} = \left(\int_{\mathfrak a^\ast} |\widehat{P_y^{-\sigma}}(\lambda)|^2~\left(|\lambda|^2+|\rho|^2\right)^\sigma~|{\bf c}(\lambda)|^{-2}~d\lambda\right)^{\frac{1}{2}} \\
&\leq&  \frac{y^{-2\sigma}}{4^{-\sigma} |\Gamma(-\sigma)|} \int_{0}^\infty \left(\int_{\mathfrak a^\ast}|\widehat h_t(\lambda)|^2 ~(|\lambda|^2+|\rho|^2)^\sigma~|{\bf c}(\lambda)|^{-2}~d\lambda\right)^{\frac{1}{2}} ~e^{-y^2/4t}~\frac{dt}{t^{1-\sigma}}\\
&\leq& \frac{y^{-2\sigma}}{4^{-\sigma} |\Gamma(-\sigma)|} \int_{0}^\infty \left(\int_{\mathfrak a^\ast}~e^{-t(|\lambda|^2+|\rho|^2)}~ (|\lambda|^2+|\rho|^2)^\sigma~|{\bf c}(\lambda)|^{-2}~d\lambda\right)^{\frac{1}{2}}~e^{-\frac{|\rho|^2}{2}t}~e^{-y^2/4t}~\frac{dt}{t^{1-\sigma}}\\
&=& I_1+I_2,
\eeas
where 
\be \label{defn-I-p} 
I_1=\frac{y^{-2\sigma}}{4^{-\sigma} |\Gamma(-\sigma)|} \int_{0}^1 \left(\int_{\mathfrak a^\ast}~e^{-t(|\lambda|^2+|\rho|^2)}~ (|\lambda|^2+|\rho|^2)^\sigma~|{\bf c}(\lambda)|^{-2}~d\lambda\right)^{\frac{1}{2}}~e^{-\frac{|\rho|^2}{2}t}~e^{-y^2/4t}~\frac{dt}{t^{1-\sigma}},
\ee
and $I_2$ is defiend as above with the integration in the $t$-variable over the interval $[1, \infty)$. It is enough to show that both $I_1$ and $I_2$ are finite. We consider $I_1$ first. Using the property (\ref{clambdaest}) of $|{\bf c}(\lambda)|^{-2}$, we estimate the inner integral in the equation above as follows
\beas
&&\int_{\{\lambda\in \mathfrak a^\ast: |\lambda|<1\}} e^{-t|\lambda|^2}~(|\lambda|^2+|\rho|^2)^{\sigma+d}~d\lambda+ \int_{\{\lambda\in \mathfrak a^\ast: |\lambda|\geq 1\}} e^{-t|\lambda|^2}~(|\lambda|^2+|\rho|^2)^{\sigma+n-l}~d\lambda\\
&\leq& C_1+ C_2 \int_{1}^\infty e^{-t r^2}~r^{2(\sigma+n-l)}~r^{l-1}~dr\\
&\leq& C_1+C_2 ~ t^{-(\sigma+n-l/2)}.
\eeas
It now follows from (\ref{defn-I-p}) that
\beas
I_1\leq C\int_{0}^1 t^{-\frac{1}{2}(\sigma+n-l/2)}~e^{-\frac{|\rho|^2}{2}t}e^{-y^2/4t}~\frac{dt}{t^{1-\sigma}} < \infty.
\eeas
On the other hand 
\beas
I_2 &\leq& C \int_{1}^\infty \left(\int_{\mathfrak a^\ast}~e^{-1(|\lambda|^2+|\rho|^2)}~ (|\lambda|^2+|\rho|^2)^\sigma~|{\bf c}(\lambda)|^{-2}~d\lambda\right)^{\frac{1}{2}}~e^{-\frac{|\rho|^2}{2}t}~e^{-y^2/4t}~\frac{dt}{t^{1-\sigma}}\\
&\leq& C \|h_{1/2}\|_{H^\sigma(X)}.
\eeas
This completes the proof that $P_y^{-\sigma} \in H^\sigma(X)$.

The proofs of Hardy's inequalities is crucially depend on the following lemma.
\begin{lem} \label{lem-I}
 For $0<\sigma<1$ and $y>0$ we have,
$(-\Delta)^\sigma P_y^{-\sigma}(x)= \frac{4^\sigma\Gamma(\sigma)}{y^{2\sigma}\Gamma(-\sigma)}~P_y^\sigma(x)$.
\end{lem}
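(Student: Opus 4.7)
The plan is to pass to the spherical Fourier transform side, since both sides of the claimed identity are $K$-biinvariant ($P_y^{\pm\sigma}$ is $K$-biinvariant because the heat kernel is). Once there, the operator $(-\Delta)^\sigma$ acts by multiplication by $(|\lambda|^2+|\rho|^2)^\sigma$, and the claim reduces to a purely scalar integral identity in the auxiliary parameter $t$, which we prove by a single change of variable.

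First, since $P_y^{-\sigma}\in H^\sigma(X)$ (established just before the lemma), the operator $(-\Delta)^\sigma$ is well-defined on it via the spectral theorem, with
\[
\widehat{(-\Delta)^\sigma P_y^{-\sigma}}(\lambda) = (|\lambda|^2+|\rho|^2)^\sigma\, \widehat{P_y^{-\sigma}}(\lambda).
\]
Using (\ref{p-h-ft}) together with $\widehat{h_t}(\lambda)=e^{-t(|\lambda|^2+|\rho|^2)}$, set $a=|\lambda|^2+|\rho|^2$ and observe that proving the lemma is equivalent (after cancelling the constants $4^{\pm\sigma}$, $\Gamma(\pm\sigma)$ and the factors $y^{\pm 2\sigma}$) to showing the scalar identity
\[
a^\sigma \int_0^\infty e^{-ta}\,e^{-y^2/4t}\,\frac{dt}{t^{1-\sigma}}
= \left(\frac{y^2}{4}\right)^{-\sigma}\int_0^\infty e^{-ta}\,e^{-y^2/4t}\,\frac{dt}{t^{1+\sigma}},
\]
for every $a\geq|\rho|^2>0$ and $y>0$.

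Next, I would verify this identity by the change of variable $t\mapsto t'=y^2/(4at)$, which exchanges the roles of the factors $e^{-ta}$ and $e^{-y^2/4t}$. A direct computation (with $dt = -\tfrac{y^2}{4a}\,t'^{-2}\,dt'$ and $t^{-(1-\sigma)} = (y^2/4a)^{-(1-\sigma)} t'^{\,1-\sigma}$) yields
\[
\int_0^\infty e^{-ta}\,e^{-y^2/4t}\,\frac{dt}{t^{1-\sigma}}
= \left(\frac{y^2}{4a}\right)^{\sigma}\int_0^\infty e^{-t'a}\,e^{-y^2/4t'}\,\frac{dt'}{t'^{\,1+\sigma}},
\]
which is exactly the needed identity.

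Finally, I would invert the spherical Fourier transform to conclude. The interchange of Fourier transform with the $t$-integral in (\ref{p-h-ft}) has already been justified by Fubini in the passage just before the lemma, and the convergence of all integrals in sight (both before and after the change of variable) is ensured by the Gaussian factor $e^{-y^2/4t}$ near $t=0$ and by the factor $e^{-ta}$ at infinity, using $a\geq|\rho|^2>0$. The main point of friction is purely bookkeeping: keeping track of the constants $y^{\pm 2\sigma}$, $4^{\pm\sigma}$ and $\Gamma(\pm\sigma)$ from the definitions of $P_y^{\pm\sigma}$ so that the constants on the two sides of the lemma match up correctly after the change of variable produces its $(y^2/4a)^\sigma$ factor; everything else is automatic from the spectral calculus.
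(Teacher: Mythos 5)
Your argument is correct in substance, but it takes a different (and in fact more direct) route than the paper. The paper does not diagonalize $(-\Delta)^\sigma$ on the spherical transform side; instead it starts from the spectral-measure identity $\langle u(\cdot,y),g\rangle=\frac{1}{\Gamma(\sigma)}\int_0^\infty\langle e^{t\Delta}(-\Delta)^\sigma f,g\rangle e^{-y^2/4t}t^{\sigma-1}dt$ for $f\in H^\sigma(X)$, applies the same scalar change-of-variable identity you use (there quoted from \cite{BRT}, $t\mapsto y^2/(4\lambda r)$) inside the $dE_{f,g}$ integral, specializes $f$ to the heat kernel $h_{t_1}$, lets $t_1\to 0$ by dominated convergence to get the pointwise relation $\int_0^\infty(-\Delta)^\sigma h_t(x)e^{-y^2/4t}t^{\sigma-1}dt=\frac{y^{2\sigma}}{4^\sigma}\int_0^\infty h_t(x)e^{-y^2/4t}t^{-\sigma-1}dt$, and then plugs this into the definition of $P_y^{-\sigma}$. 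Your proof replaces this limiting argument by the observation that $P_y^{-\sigma}$ is $K$-biinvariant and lies in $H^\sigma(X)$ (both established in the paper just before the lemma), so that $(-\Delta)^\sigma$ acts by multiplication by $(|\lambda|^2+|\rho|^2)^\sigma$ on $\widehat{P_y^{-\sigma}}$, and the lemma becomes the one-variable identity you verify by $t\mapsto y^2/(4at)$; this is cleaner, at the cost of yielding the identity a.e.\ in $x$ (which is all the paper needs, and which upgrades to the stated pointwise identity since both sides are continuous). Two small points of bookkeeping: the ``needed identity'' as you state it carries the wrong sign in the exponent of $y^2/4$ --- matching the constants in the definitions of $P_y^{\pm\sigma}$, the lemma is equivalent to
\begin{equation*}
a^\sigma\int_0^\infty e^{-ta}e^{-y^2/4t}\frac{dt}{t^{1-\sigma}}
=\Bigl(\frac{y^2}{4}\Bigr)^{\sigma}\int_0^\infty e^{-ta}e^{-y^2/4t}\frac{dt}{t^{1+\sigma}},\qquad a=|\lambda|^2+|\rho|^2,
\end{equation*}
which is exactly what your change of variable produces, so the slip is only in the displayed reduction, not in the computation; and the Fubini justification quoted from the paper is stated there for $P_y^{-\sigma}$, so you should note that the same (easier) estimate gives $\widehat{P_y^{\sigma}}(\lambda)=\frac{y^{2\sigma}}{4^\sigma\Gamma(\sigma)}\int_0^\infty e^{-ta}e^{-y^2/4t}t^{-1-\sigma}dt$ as well. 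With these cosmetic corrections the proof is complete.
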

\begin{proof}
Let $f\in H^\sigma(X)$ and $u(x, y)=f \ast P_y^\sigma(x)$ be the solution of the extension problem (\ref{extn1}). For any $g\in L^2(X)$ we have by equation (\ref{expression-u-1}) that
\bea \label{eqn1}
\left\langle u(\cdot, y), g \right\rangle &=& \frac{1}{\Gamma(\sigma)}\int_{0}^\infty \left\langle e^{t\Delta}~(-\Delta)^{\sigma}f, g \right\rangle ~e^{-y^2/4t}~\frac{dt}{t^{1-\sigma}}\\
&=& \frac{1}{\Gamma(\sigma)}\int_{0}^\infty \int_{|\rho|^2}^\infty e^{-t \lambda}~\lambda^{\sigma} dE_{f, g}(\lambda)~e^{-y^2/4t}~\frac{dt}{t^{1-\sigma}} \nonumber\\
&=& \frac{1}{\Gamma(\sigma)}\int_{|\rho|^2}^\infty \left( \lambda^\sigma \int_{0}^\infty e^{-t\lambda}~t^{\sigma -1}~e^{-y^2/4t}~dt \right)~ dE_{f, g}(\lambda). \nonumber
\eea 
By using change of variable $t\ra y^2/(4\lambda r)$ we get the following formula (\cite{BRT}, p. 2582, equation(2.5)) 
\bes 
\lambda^\sigma \int_{0}^\infty e^{-t\lambda}~t^{\sigma -1}~e^{-y^2/4t}~dt
= \frac{y^{2\sigma}}{4^\sigma} \int_0^\infty e^{-t\lambda}~t^{-\sigma -1}~e^{-y^2/4t}~dt.
\ees
Using this in the equation above it follows that
\bea \label{eqn2}
\left\langle u(\cdot, y), g \right\rangle &=& \frac{y^{2\sigma}}{4^{\sigma} \Gamma(\sigma)} \int_0^\infty t^{-\sigma -1}~e^{-y^2/4t}~ \left(\int_{|\rho|^2}^\infty e^{-t\lambda}~ dE_{f,g}(\lambda)\right)~dt \nonumber\\
&=& \frac{y^{2\sigma}}{4^{\sigma} \Gamma(\sigma)} \int_0^\infty \left\langle e^{t\Delta}f, g \right\rangle t^{-\sigma -1}~e^{-y^2/4t}~ dt.
\eea
Therefore, from equations (\ref{eqn1}) and (\ref{eqn2}) we have
\bes
\int_{0}^\infty e^{t\Delta}~(-\Delta)^\sigma f(x)~e^{-y^2/4t}~t^{\sigma-1}dt= \frac{y^{2\sigma}}{4^{\sigma}} \int_{0}^\infty e^{t\Delta}f(x)~e^{-y^2/4t}~t^{-\sigma-1}~dt.
\ees
If we take the function  $f$ to be the heat kernel $h_{t_1}, t_1>0$, then the equation above reduces to 
\bes
\int_{0}^\infty (-\Delta)^\sigma(h_{t+t_1})(x)~e^{-y^2/4t}~t^{\sigma-1}dt= \frac{y^{2\sigma}}{4^{\sigma}} \int_{0}^\infty h_{t+t_1}(x)~e^{-y^2/4t}~t^{-\sigma-1}~dt.
\ees
Taking $t_1 \ra 0$, we get from dominated convergent theorem that
\be \label{relation}
\int_{0}^\infty (-\Delta)^\sigma h_t(x)~e^{-y^2/4t}~t^{\sigma-1}dt= \frac{y^{2\sigma}}{4^\sigma} \int_{0}^\infty h_t(x)~e^{-y^2/4t}~t^{-\sigma-1}~dt.
\ee
Using (\ref{relation}) and (\ref{defn-P}) we get
\beas
(-\Delta)^\sigma P_y^{-\sigma}(x)&=& \frac{y^{-2\sigma}}{4^{-\sigma} \Gamma(-\sigma)} \int_0^\infty (-\Delta)^\sigma h_t(x)~e^{-y^2/4t}~t^{\sigma-1}~dt\\
&=& \frac{1}{\Gamma(-\sigma)}\int_{0}^\infty h_t(x)~e^{-y^2/4t}~t^{-\sigma-1}~dt\\
&=& \frac{4^{\sigma} \Gamma(\sigma)}{y^{2\sigma} \Gamma(-\sigma)}~P_y^\sigma(x).
\eeas This completes the proof.
\end{proof}
We will now compute the asymptotic behaviour of the Poisson kernel $P_y^\sigma$ for arbitrary rank Riemannian symmetric spaces of noncompact type. We use this estimate crucially for the remaining part of this article. 
\begin{thm}\label{est-Pysigma} 
For $-1<\sigma<1, \sigma\not=0$ and $y>0$ we have
\beas
\Gamma(\sigma)\, P_y^\sigma (x)&\asymp & \frac{y^{2\sigma}}{4^\sigma} \sqrt{|x|^2+y^2}^{-l/2-1/2-\sigma-|\Sigma_0^+|} \phi_0(x) e^{-|\rho|\sqrt{|x|^2+y^2}}, \txt{ for } |x|^2+y^2 \geq 1,\\
&\asymp & y^{2\sigma}~\left(|x|^2+y^2\right)^{-n/2-\sigma} , \txt{ for } ~ |x|^2+y^2< 1.
\eeas	
\end{thm}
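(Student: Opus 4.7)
The plan is to apply Laplace's (saddle point) method to the $t$-integral
\bes
I(x,y) := \int_0^\infty h_t(x)\, e^{-y^2/(4t)}\, \frac{dt}{t^{1+\sigma}},
\ees
which by (\ref{defn-P}) satisfies $\Gamma(\sigma) P_y^\sigma(x) = 4^{-\sigma} y^{2\sigma}\, I(x,y)$. Writing $x = k(\exp H)M$ with $H\in\overline{\mathfrak a_+}$ and $|H|=|x|$, the heat kernel factor carries the exponential $e^{-|\rho|^2 t - \rho(H) - |H|^2/(4t)}$, so after combining with $e^{-y^2/(4t)}$ the controlling phase is
\bes
\Phi_0(t) := |\rho|^2 t + \frac{A}{4t}, \qquad A := |x|^2 + y^2.
\ees
This is minimised at the saddle point $t_0 = \sqrt{A}/(2|\rho|)$, with $\min \Phi_0 = |\rho|\sqrt{A}$ and $\Phi_0''(t_0) \asymp |\rho|^3 A^{-1/2}$, predicting a Gaussian width $\asymp A^{1/4}|\rho|^{-3/2}$ around $t_0$.

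For the regime $A \geq 1$ we have $t_0 \gtrsim 1$, so on a fixed interval $[\alpha t_0, \beta t_0]$ around the saddle the condition $|H| \leq \kappa(1+t)$ of Theorem \ref{est-heatkernel-bothside} holds for a suitable $\kappa$ (since $|x| \leq \sqrt{A} = 2|\rho| t_0$). Substituting the two-sided heat kernel estimate and using $t \asymp t_0 \gtrsim 1$ to replace $(1+t)^{(n-l)/2 - |\Sigma_0^+|}$ by $t^{(n-l)/2-|\Sigma_0^+|}$, and (\ref{estimate-phi0}) to bundle $\prod_{\alpha \in \Sigma_0^+}(1+\alpha(H))\, e^{-\rho(H)} \asymp \phi_0(x)$, the integrand on this interval becomes $\asymp t^{-l/2-|\Sigma_0^+|-1-\sigma}\phi_0(x)\,e^{-\Phi_0(t)}$. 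A standard Laplace expansion then yields a contribution $\asymp t_0^{-l/2-|\Sigma_0^+|-1-\sigma}\phi_0(x)\cdot A^{1/4}\,e^{-|\rho|\sqrt{A}}$, which after collecting powers of $A$ equals the claimed $A^{-l/4-1/4-\sigma/2-|\Sigma_0^+|/2}\phi_0(x)\,e^{-|\rho|\sqrt A}$. The complementary contribution from $t \notin [\alpha t_0, \beta t_0]$ must be shown negligible; I would bound it by the global upper estimate (\ref{est-ht}) and the pointwise gap $\Phi_0(t) - \Phi_0(t_0) \geq c\,|\rho|\sqrt{A}\cdot \log^2(t/t_0)$, which produces an extra small factor beating the polynomial loss from (\ref{est-ht}).

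For the regime $A < 1$ the saddle lies in the short-time window $t_0 < 1/(2|\rho|)$, and $|H|\leq \sqrt A < 1$. On $t \in (0,1)$ the hypotheses of Theorem \ref{est-heatkernel-bothside} are satisfied with $\kappa=1$, and all of the factors $\prod_{\alpha\in\Sigma_0^+}(1+\alpha(H))$, $(1+t)^{(n-l)/2-|\Sigma_0^+|}$, $e^{-|\rho|^2 t}$, $e^{-\rho(H)}$, $\phi_0(x)$ are bounded above and below by positive constants there, so $h_t(x) \asymp t^{-n/2} e^{-|x|^2/(4t)}$. The change of variable $u = A/(4t)$ then reduces $\int_0^1 t^{-n/2-1-\sigma} e^{-A/(4t)}\,dt$ to $\big(4/A\big)^{n/2+\sigma}\int_{A/4}^\infty u^{n/2+\sigma-1} e^{-u}\,du \asymp A^{-n/2-\sigma}$, since the lower limit $A/4$ tends to $0$. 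The tail $t \geq 1$ is bounded by (\ref{est-ht}) together with $e^{-|\rho|^2 t}$ to yield an $O(1)$ piece, which is absorbed as $A \to 0$. Combined, this gives $I(x,y) \asymp A^{-n/2-\sigma}$, matching the claim.

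The main obstacle is maintaining two-sided asymptotic control rather than only an upper bound: in particular, one must ensure that the Laplace expansion in the large-$A$ regime captures a genuine lower bound, which relies on the sharpness of Theorem \ref{est-heatkernel-bothside} together with positivity of the integrand throughout $[\alpha t_0,\beta t_0]$; and one must verify that the polynomial prefactors from the global bound (\ref{est-ht}), placed against the Gaussian decay $e^{-\Phi_0(t)+\Phi_0(t_0)}$, indeed produce a tail of strictly lower order than $A^{1/4} e^{-|\rho|\sqrt A}$ uniformly in $A \geq 1$.
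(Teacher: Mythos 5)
Your argument is correct and follows essentially the same route as the paper: split the $t$-integral around the saddle $t_0=\sqrt{|x|^2+y^2}/(2|\rho|)$, use the two-sided heat-kernel bound of Theorem \ref{est-heatkernel-bothside} together with the $\phi_0$ asymptotics on the bulk, apply Laplace's method to $\int e^{-|\rho|\sqrt{|x|^2+y^2}\,(s+1/s)/2}\,ds$, and dispose of the tails via the exponential gap in the phase, while the local regime reduces to $\int_0^1 t^{-n/2-1-\sigma}e^{-(|x|^2+y^2)/4t}\,dt\asymp (|x|^2+y^2)^{-n/2-\sigma}$ plus an $O(1)$ tail dominated by the main term. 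The only (harmless) deviations are that for $|x|^2+y^2<1$ you derive $h_t(x)\asymp t^{-n/2}e^{-|x|^2/4t}$ from Theorem \ref{est-heatkernel-bothside} where the paper uses the local expansion (\ref{est-h-local}), and that for the large-$t$ tail the two-sided bound of Theorem \ref{est-heatkernel-bothside} is the appropriate tool rather than (\ref{est-ht}), which the paper reserves for the regime $t\ll|H|$.
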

\begin{proof}
We first assume that $|x|^2+y^2< 1$. 
In this case, we will use the following local expansion of the heat kernel $h_t(x)$ 
\be \label{est-h-local}
h_t(x)=e^{-|x|^2/4t} t^{-n/2} v_0(x) + e^{-c|x|^2/t}  \mathcal O \left(t^{-n/2+1}\right) ,
\ee
where $v_0(x)=(4\pi)^{-n/2}+ \mathcal O (|x|^2)$ and $c< 1/4$ (\cite[(3.9), p. 278]{Anker}). Using this we have 
\beas
\Gamma(\sigma) P_y^\sigma(x)&=& \frac{y^{2\sigma}}{4^\sigma} \int_{0}^1 \left(e^{-|x|^2/4t} t^{-n/2} v_0(x) + e^{-c|x|^2/t}  \mathcal O \left(t^{-n/2+1}\right)\right) e^{-y^2/4t}~\frac{dt}{t^{1+\sigma}}\\
&& + \frac{y^{2\sigma}}{4^\sigma} \int_{1}^\infty h_t(x)~e^{-y^2/4t}~\frac{dt}{t^{1+\sigma}}\\
&=& \frac{y^{2\sigma}}{4^\sigma} v_0(x) \int_{0}^1 e^{-(|x|^2+y^2)/4t}~t^{-n/2-1-\sigma}~dt \\
&& + \frac{y^{2\sigma}}{4^\sigma} \int_{0}^1 e^{-(c|x|^2+y^2/4)/t}~\mathcal O(t^{-n/2+1})~t^{-1-\sigma}~dt + \frac{y^{2\sigma}}{4^\sigma} \int_{1}^\infty h_t(x)~e^{-y^2/4t}~\frac{dt}{t^{1+\sigma}}.
\eeas
We write the right-hand side of the equation above as $I_1 + I_2 + I_3$, where $I_1, I_2$ and $I_3$ are the first, second and third term respectively. Then applying  change of variable $s= \left(|x|^2+y^2\right)/(4t)$, we have 
\bes
I_1=4^{\frac n2} y^{2\sigma}~\left(|x|^2+y^2\right)^{-n/2-\sigma}~ v_0(x) \int_{(|x|^2+y^2)/4}^{\infty} e^{-s}~s^{n/2+ \sigma-1}~ds.
\ees
As $|x|^2 + y^2<1$, 
\bes
\int_{1}^\infty e^{-s}~s^{n/2+\sigma-1}~ds \leq \int_{(|x|^2+y^2)/4}^{\infty} e^{-s}~s^{n/2+\sigma-1}~ds \leq \int_{0}^\infty e^{-s}~s^{n/2+\sigma-1}~ds.
\ees
This implies that for $|x|^2+y^2< 1$
\bes
I_1\asymp y^{2\sigma} \left(|x|^2+y^2\right)^{-n/2-\sigma},
\ees
as $v_0(x)=(4\pi)^{-n/2}+ \mathcal O (|x|^2)$. For $I_2$, using $c< 1/4$ we have that
\beas
I_2 & \leq & C \frac{y^{2\sigma}}{4^\sigma} \int_{0}^1 e^{-c(|x|^2+y^2)/t}~\mathcal O(t^{-n/2+1})~t^{-1-\sigma}~dt\\
&\leq& C \,y^{2\sigma}~\left( |x|^2+y^2\right)^{-n/2-\sigma+1}~ \int_{c(|x|^2+y^2)}^\infty e^{-s}~s^{n/2+\sigma-2}~ds\\ 
&\leq & C\, y^{2\sigma}~\left( |x|^2+y^2\right)^{-n/2-\sigma}~\int_{0}^\infty  e^{-s}~s^{n/2+\sigma-1}~ds\\
&\leq & C\,  y^{2\sigma}~\left( |x|^2+y^2\right)^{-n/2-\sigma}.
\eeas
For the integral $I_3$,  using Theorem \ref{est-heatkernel-bothside} we get that for $|x|^2 + y^2<1$,
\bes
I_3= \frac{y^{2\sigma}}{4^\sigma} \int_{1}^\infty h_t(x)~e^{-y^2/4t}~\frac{dt}{t^{1+\sigma}}\leq C y^{2\sigma} \leq C y^{2\sigma} \left( |x|^2+y^2\right)^{-n/2-\sigma}.
\ees
This proves that for $|x|^2 + y^2<1$,
\bes
\Gamma(\sigma) P_y^\sigma(x) \asymp y^{2\sigma}~(|x|^2+y^2)^{-n/2-\sigma}.
\ees
We will now assume that $|x|^2+y^2\geq1$. Let us fix a positive number $\kappa >4$. We proceed as in the proof of \cite[Theorem 4.3.1]{AJ}.
\beas
\Gamma(\sigma) P_y^\sigma(x) &=& \frac{y^{2\sigma}}{4^\sigma}\int_{0}^{\infty} h_t(x)~e^{-y^2/4t}~\frac{dt}{t^{1+\sigma}}\\
&=& \frac{y^{2\sigma}}{4^\sigma}\{I_4+I_5+I_6\},
\eeas
where the quantities $I_4, I_5$ and $I_6$ are defined by the integration of the above integrand	$h_t(x)~e^{-y^2/4t}~ t^{-1-\sigma}$ over the intervals $\big[0, \kappa^{-1} b \big), \big[\kappa^{-1} b, \kappa b\big)$ and $\big[\kappa b, \infty\big)$ with $b=\sqrt{|x|^2+y^2}/(2|\rho|)$ respectively. For the integral $I_5$, using Theorem \ref{est-heatkernel-bothside} and the asymptotic of $\phi_0$ in Theorem \ref{thm-phi} (3) we get the following:
	\beas
	I_5 &\asymp& \int_{\frac{\kappa^{-1} \sqrt{|x|^2+y^2}}{2|\rho|}}^{\frac{\kappa \sqrt{|x|^2+y^2}}{2|\rho|}} t^{-n/2}(1+t)^{(n-l)/2- |\Sigma_0^+|} \left\{\prod_{\alpha\in \Sigma_0^+}(1+\alpha(x))\right\} e^{-|\rho|^2t-\rho(\log x)-|x|^2/4t} e^{-y^2/4t} \frac{dt}{t^{1+\sigma}}\\
	&\asymp& \int_{\frac{\kappa^{-1} \sqrt{|x|^2+y^2}}{2|\rho|}}^{\frac{\kappa \sqrt{|x|^2+y^2}}{2|\rho|}} t^{-l/2-|\Sigma_0^+|} \phi_0(x) e^{-|\rho|^2 t}e^{-(|x|^2+y^2)/4t} \frac{dt}{t^{1+\sigma}}\\
	&=&\int_{\kappa^{-1}}^\kappa (s\sqrt{|x|^2+y^2}/2|\rho|)^{-l/2-\sigma-1-|\Sigma_0^+|} \phi_0(x)e^{-s|\rho| \sqrt{|x|^2+y^2}/2}e^{-|\rho|\sqrt{|x|^2+y^2}/(2s)}\left(\frac{\sqrt{|x|^2+y^2}}{2|\rho|} \right)ds\\
	&\asymp& \left(\frac{\sqrt{|x|^2+y^2}}{2|\rho|}\right)^{-l/2-\sigma-|\Sigma_0^+|} \phi_0(x)\int_{\kappa^{-1}}^{\kappa}e^{-\sqrt{|x|^2+y^2}|\rho|(s+1/s)/2} ds.
	\eeas
The last both side estimate follows because 
\bes
\kappa^{-(l/2 + \sigma +1 + |\Sigma_0^+|)} \leq s^{-(l/2 + \sigma +1 + |\Sigma_0^+|)}\leq \kappa^{(l/2 + \sigma +1 + |\Sigma_0^+|)}.
\ees
Now, using the fact that 
\bes
\int_{\kappa^{-1}}^{\kappa} e^{-|\rho| \sqrt{|x|^2+y^2}(s+1/s)/2} ds\asymp |\rho|^{-1/2}(|x|^2+y^2)^{-1/4} e^{-|\rho| \sqrt{|x|^2+y^2}},
\ees
(this follows by the Laplace method \cite[Ch 5]{Cop})
we get from the above equation that 
\beas
I_5 &\asymp& \left(\sqrt{|x|^2+y^2}\right)^{-l/2-1/2-\sigma-|\Sigma_0^+|} \phi_0(x) e^{-|\rho| \sqrt{|x|^2+y^2}}.
\eeas
For the third integral $I_6$, we will use the fact that $\kappa> 4$. Using Theorem \ref{est-heatkernel-bothside}, we get
\beas
I_6 &\leq & \phi_0(x) \int_{\kappa \sqrt{|x|^2+y^2}/(2|\rho|)}^\infty t^{-l/2-|\Sigma_0^+|-1-\sigma}e^{-|\rho|^2t} e^{-(|x|^2+y^2)/4t}dt\\
&\leq& \phi_0(x) \left(\kappa \sqrt{|x|^2+y^2}/(2|\rho|)\right)^{-l/2-|\Sigma_0^+|-1}\int_{\kappa\sqrt{|x^2|+y^2}/(2|\rho|)}^{\infty} t^{-\sigma}e^{-|\rho|^2t} e^{- (|x|^2+y^2)/(4t)}dt\\
&\leq& C \phi_0(x) \left(\sqrt{|x|^2+y^2}\right)^{-l/2-|\Sigma_0^+|-1} ~e^{-|\rho|^2 k \sqrt{|x|^2+y^2}/(4|\rho|)} \int_{\kappa\sqrt{|x^2|+y^2}/(2|\rho|)}^{\infty} t^{-\sigma} e^{-|\rho|^2t/2} e^{- (|x|^2+y^2)/(4t)}dt\\
&\leq& C \left(\sqrt{|x|^2+y^2}\right)^{-l/2-|\Sigma_0^+|-1/2}~\phi_0(x)e^{-(|\rho|+\eta)\sqrt{|x|^2+y^2}},
\eeas
where $\eta= |\rho| \kappa/4 -|\rho|>0$.
For the first integral $I_4$, we use heat kernel Gaussian estimate (\ref{est-ht}) and the estimate of $\phi_0$ in Theorem \ref{thm-phi} to obtain the following
\beas
I_4 &\leq&  \int_{0}^{\kappa^{-1}\sqrt{|x|^2+y^2}/(2|\rho|)} t^{-d_1}(1+|x|)^{d_2}e^{-|\rho|^2t -\rho(\log x)} e^{-(|x|^2+y^2)/(4t)} \frac{dt}{t^{1+\sigma}}\\
&\leq& (1+|x|)^{d_2-|\Sigma_0^{+}|}\phi_0(x) \int_{0}^{\kappa^{-1}\sqrt{|x|^2+y^2}/(2|\rho|)}e^{-(|x|^2+y^2)/(4t)}t^{-1-\sigma-d_1}dt\\
&=& (1+|x|)^{d_2-|\Sigma_0^{+}|}\phi_0(x) \int_{0}^{\kappa^{-1}\sqrt{|x|^2+y^2}/(2|\rho|)}e^{-(|x|^2+y^2)/(8t)} e^{-(|x|^2+y^2)/(8t)} t^{-1-\sigma-d_1}dt\\
&\leq& C(1+|x|)^{d_2-|\Sigma_0^{+}|}\phi_0(x) \,e^{-|\rho|\kappa \frac{\sqrt{x^2 + y^2}}{4}}~\int_{0}^{\kappa^{-1}\sqrt{|x|^2+y^2}/(2|\rho|)}e^{-(|x|^2+y^2)/(8t)} t^{-1-\sigma-d_1}dt\\ 
&\leq & C(1+|x|)^{d_2-|\Sigma_0^{+}|}\phi_0(x) \,e^{-(|\rho|+\epsilon)\sqrt{|x|^2 + y^2}}  (|x|^2 + y^2)^{-\sigma-d_1},
\eeas
for some $\epsilon>0$, as $\kappa>4$.

This completes the proof.
\end{proof}

To prove Hardy's inequalities we use an integral representation for the operator $(-\Delta)^{\sigma}$. The following function 
\be \label{defn-p0}
P_0^\sigma(x)= \int_{0}^\infty h_t(x) \frac{dt}{t^{1+\sigma}},
\ee
serves as the kernel of the integral representation. We state both sides estimate of $P_0^\sigma$, whose proof is exactly the same as of Theorem \ref{est-Pysigma}.
\begin{thm}\label{est-Pysigma-0} For any $\alpha > -n/2 $ the following asymptotic estimates holds:
\beas
P_0^\alpha (x)&\asymp &  |x|^{-l/2-1/2-\alpha-|\Sigma_0^+|} \phi_0(x) e^{-|\rho| |x|}, \txt{ for } |x| \geq 1,\\
&\asymp & |x|^{-n-2\alpha} , \txt{ for } ~ |x|< 1.
\eeas	
\end{thm}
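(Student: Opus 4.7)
The plan is to mirror the proof of Theorem \ref{est-Pysigma} with $y=0$, splitting the integral $P_0^\alpha(x)=\int_0^\infty h_t(x)\,t^{-1-\alpha}\,dt$ according to whether $|x|$ is small or large. The two regimes are governed by different parts of the heat kernel: the short-time Euclidean Gaussian behaviour for $|x|<1$, and the bilateral estimate of Theorem \ref{est-heatkernel-bothside} together with Laplace's method for $|x|\geq 1$.

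For $|x|<1$ I would substitute the local expansion $h_t(x)=e^{-|x|^2/4t}t^{-n/2}v_0(x)+e^{-c|x|^2/t}\mathcal{O}(t^{-n/2+1})$ with $c<1/4$ and $v_0(x)=(4\pi)^{-n/2}+\mathcal{O}(|x|^2)$ on $(0,1)$, and use Theorem \ref{est-heatkernel-bothside} on $[1,\infty)$. The principal term $v_0(x)\int_0^1 e^{-|x|^2/4t}t^{-n/2-1-\alpha}\,dt$, after the change of variable $s=|x|^2/(4t)$, reduces to $4^{n/2+\alpha}|x|^{-n-2\alpha}v_0(x)\int_{|x|^2/4}^{\infty}e^{-s}s^{n/2+\alpha-1}\,ds$. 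The hypothesis $\alpha>-n/2$ guarantees that $s^{n/2+\alpha-1}$ is integrable at $0$, so for $|x|<1$ the $s$-integral is bounded above and below by positive constants; combined with the positivity of $v_0$ near the origin this gives the main term $\asymp|x|^{-n-2\alpha}$. The $\mathcal{O}$-remainder of the local expansion, handled by the same substitution, contributes a strictly smaller $\mathcal{O}(|x|^{-n-2\alpha+2})$, and the tail $\int_1^\infty h_t(x)\,t^{-1-\alpha}\,dt$ is uniformly bounded by the exponential decay $e^{-|\rho|^2 t}$ and hence absorbed since $|x|<1$.

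For $|x|\geq 1$ I fix $\kappa>4$, set $b=|x|/(2|\rho|)$, and split the integral over $[0,\kappa^{-1}b)$, $[\kappa^{-1}b,\kappa b)$, $[\kappa b,\infty)$. On the middle interval Theorem \ref{est-heatkernel-bothside} gives $h_t(x)\asymp t^{-l/2-|\Sigma_0^+|}\phi_0(x)\,e^{-|\rho|^2 t-|x|^2/(4t)}$; the substitution $t=s|x|/(2|\rho|)$ turns the middle integral into
\bes
|x|^{-l/2-\alpha-|\Sigma_0^+|}\,\phi_0(x)\int_{\kappa^{-1}}^{\kappa}s^{-l/2-1-\alpha-|\Sigma_0^+|}\,e^{-|\rho||x|(s+1/s)/2}\,ds
\ees
up to constants. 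Laplace's method, applied to the phase $(s+1/s)/2$ with unique non-degenerate minimum at $s=1$ of value $1$ inside $[\kappa^{-1},\kappa]$, evaluates the $s$-integral as $\asymp|\rho|^{-1/2}|x|^{-1/2}e^{-|\rho||x|}$, and this produces precisely the stated main term $|x|^{-l/2-1/2-\alpha-|\Sigma_0^+|}\phi_0(x)e^{-|\rho||x|}$. On $[\kappa b,\infty)$ the bilateral estimate still applies and the factor $e^{-|\rho|^2 t}$ at $t\geq\kappa|x|/(2|\rho|)$ forces a decay $e^{-\kappa|\rho||x|/4}$ which, since $\kappa>4$, is $o(e^{-|\rho||x|})$. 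On $[0,\kappa^{-1}b)$ the global Gaussian bound \eqref{est-ht} together with the $\phi_0$-asymptotics of Theorem \ref{thm-phi}(3) yields a factor $e^{-|x|^2/(4t)}\leq e^{-\kappa|\rho||x|/4}$ on this range, again $o(e^{-|\rho||x|})$.

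The main obstacle is the careful bookkeeping in the Laplace step: one must track simultaneously the polynomial factor in $|x|$ coming from the prefactor $t^{-l/2-|\Sigma_0^+|}$ after the rescaling and the additional $|x|^{-1/2}$ produced by the stationary-phase integration, in order to arrive at the exact exponent $-l/2-1/2-\alpha-|\Sigma_0^+|$ appearing in the theorem; all remaining estimates are direct transcriptions of those in Theorem \ref{est-Pysigma} with $y=0$.
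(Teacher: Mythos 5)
Your proposal is correct and follows essentially the same route as the paper, which in fact only remarks that the proof of Theorem \ref{est-Pysigma-0} ``is exactly the same as of Theorem \ref{est-Pysigma}'': the local heat-kernel expansion plus the substitution $s=|x|^2/(4t)$ for $|x|<1$ (where $\alpha>-n/2$ is precisely what makes $\int e^{-s}s^{n/2+\alpha-1}\,ds$ converge), and the three-fold splitting at $\kappa^{\mp1}|x|/(2|\rho|)$ with the bilateral estimate and Laplace's method on the middle piece for $|x|\geq 1$. All the bookkeeping you flag (the rescaled prefactor $t^{-l/2-|\Sigma_0^+|}$ combining with the $|x|^{-1/2}$ from the Laplace step, and the extra exponential gain $e^{-\epsilon|x|}$ on the two outer pieces coming from $\kappa>4$) matches the paper's computation with $y=0$.
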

\begin{cor} \label{cor-p0}
Let $\chi$ be the characteristic function of the unit ball in $X$ and $\alpha>0$. Then the function $(1-\chi)P_0^\alpha$ is in $L^p(X)$ for $1\leq p\leq \infty$.
\end{cor}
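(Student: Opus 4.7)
My plan is to bracket $(1-\chi)P_0^\alpha$ in $L^1(X) \cap L^\infty(X)$ and then invoke the elementary pointwise interpolation $\|f\|_{L^p(X)}^p \leq \|f\|_{L^1(X)}\,\|f\|_{L^\infty(X)}^{p-1}$ to cover the whole range $p \in [1,\infty]$ at once. The $L^\infty$ bound is immediate from Theorem \ref{est-Pysigma-0}: for $|x| \geq 1$ the prefactor $|x|^{-l/2-1/2-\alpha-|\Sigma_0^+|}$ is bounded, (\ref{phi0}) gives $\phi_0(x) \leq 1$, and $e^{-|\rho||x|} \leq e^{-|\rho|}$.

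For the $L^1$ bound I would combine the same asymptotic from Theorem \ref{est-Pysigma-0} with the polar decomposition (\ref{integral-X}), the upper bound (\ref{estimate-phi0}) for $\phi_0$, and $J(\exp H) \leq C\,e^{2\rho(H)}$ from (\ref{J-est}); after simplification this reduces matters to the finiteness of
\bes
\int_{\{H \in \mathfrak{a}_+\, :\, |H| \geq 1\}} |H|^{-l/2-1/2-\alpha}\, e^{\rho(H)-|\rho||H|}\, dH.
\ees
Switching to polar coordinates $H = r\omega$ with $r \geq 1$ and $\omega \in S^{l-1} \cap \overline{\mathfrak{a}_+}$, this becomes
\bes
\int_1^\infty r^{\,l/2 - 3/2 - \alpha}\left(\int_{S^{l-1} \cap \overline{\mathfrak{a}_+}} e^{-r(|\rho|-\rho(\omega))}\, d\omega\right) dr.
\ees
The function $g(\omega):=|\rho|-\rho(\omega)$ is nonnegative and attains its unique zero on the sphere at $\omega_0 = Y_\rho/|\rho|$, which lies in the \emph{interior} of $\overline{\mathfrak{a}_+} \cap S^{l-1}$: the classical identity $\langle \rho,\alpha_j\rangle = m_{\alpha_j}|\alpha_j|^2/2 > 0$ for every simple root $\alpha_j$ shows $Y_\rho$ sits strictly inside the positive Weyl chamber. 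Since $g$ vanishes quadratically at $\omega_0$, Laplace's method on the $(l-1)$-dimensional sphere yields $\int_\omega e^{-r g(\omega)}\, d\omega \asymp r^{-(l-1)/2}$ as $r \to \infty$, so the radial integral collapses to $\int_1^\infty r^{-1-\alpha}\, dr$, which converges precisely because $\alpha > 0$.

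The main obstacle will be this angular integral: the crude estimate $\rho(H) \leq |\rho|\,|H|$ destroys the radial exponential decay, leaving polynomial growth that is not integrable on its own. The gain of $r^{-(l-1)/2}$ furnished by the quadratic vanishing of $|\rho|-\rho(\omega)$ at $\omega_0$, via Laplace's method, is exactly what rescues the $L^1$ bound and, through the interpolation above, the entire corollary.
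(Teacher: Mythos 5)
Your proof is correct and follows essentially the same route as the paper: both reduce matters to the finiteness of $\int_{\{|H|\ge 1\}}|H|^{-l/2-1/2-\alpha}\,e^{\rho(H)-|\rho||H|}\,dH$ and extract the crucial angular gain of $r^{-(l-1)/2}$ from the quadratic vanishing of $|\rho|-\rho(\omega)$ in the $\rho$-direction (the paper phrases this as a Laplace-type estimate over a small cone about the $\rho$-axis together with exponential decay outside that cone). Your explicit justification that $Y_\rho/|\rho|$ lies in the interior of $\overline{\mathfrak{a}_+}\cap S^{l-1}$, and the $L^1\cap L^\infty$ log-convexity argument for intermediate $p$, are harmless refinements of what the paper leaves implicit.
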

\begin{proof}
For $1< p\leq \infty$, the result follows trivially from the asymptotic formula in Theorem \ref{est-Pysigma-0}. We prove the case $p=1$. We recall from  (\ref{integral-X}) that
\bes
\int_{\{x\in X:|x|>1\}} P_0^\alpha(x)~dx \leq C\int_{\{H\in \overline{\mathfrak a_+}: |H|>1\}} P_0^\alpha(\exp H)~e^{2\rho(H)}~dH.
\ees
Let $\Gamma$ be a small circular cone in $\overline{\mathfrak a_+}$ around the $\rho$-axis. By introducing polar coordinates in $\Gamma$ and using (\ref{estimate-phi0}) we get
\beas
&&\int_{\{H\in \Gamma: |
H|>1\}} P_0^\alpha(\exp H)~e^{2\rho(H)}~dH\\
&\leq& C\int_{\{H\in \Gamma: |H|>1\}} |H|^{-l/2-1/2-\alpha}~e^{\rho(H)-|\rho||H|}~dH \\
&\leq& C\int_{1}^\infty r^{-l/2-1/2-\alpha} r^{l-1}~\int_{0}^\nu (\sin \xi)^{l-2}~e^{-r(1-\cos \xi)}~d\xi~dr.
\eeas
Since $\sin \xi \sim \xi$ and $1-\cos \xi \sim \xi^2$, the inner integral behaves like $r^{1/2-l/2}$. Consequently, the integral above is finite. On the other hand, $e^{\rho(H)-|\rho||H|}$ decreases exponentially outside $\Gamma$, and therefore
\bes
\int_{\{H\in \overline{\mathfrak a_+}\backslash \Gamma: |
H|>1\}} P_0^\alpha(\exp H)~e^{2\rho(H)}~dH=\int_{\{H\in \overline{\mathfrak a_+}\backslash\Gamma:|H|>1\}} H^{-l/2-1/2-\alpha}~e^{\rho(H)-|\rho||H|}~dH< \infty.
\ees
This completes the proof.
\end{proof}

\section{Fractional Hardy inequalities}
This  section aims to prove two versions of the Hardy's inequalities for fractional powers of the Laplace-Beltrami operator on $X$, namely Theorem \ref{Hardy-inhomogeneous} and Theorem \ref{Hardy-homogeneous} with homogeneous and non-homogeneous weight functions respectively.
In order to prove these inequalities, we will follow similar ideas used by Frank et al. \cite{FLS} in the case of Euclidean Laplacian. Therefore, we need to
establish ground state representations for the operators $(-\Delta)^\sigma$. We start with the following integral representations of $(-\Delta)^\sigma$ on $X$. For the cases of real hyperbolic spaces, analogues integral representations were proved in \cite[Theorem 2.5]{BGS}.

\begin{lem} \label{lem-int}
Let $0 < \sigma< 1/2$. Then for all $f\in C_c^\infty(X)$ we have
\bes
(-\Delta)^\sigma f(x)=\frac{1}{|\Gamma(-\sigma)|}~\int_X \left(f(x)-f(z)\right)P_0^\sigma(z^{-1}x)~dz,
\ees
where $P_0^\sigma$ is defined in (\ref{defn-p0}).
\end{lem}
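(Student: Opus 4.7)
The plan is to derive the representation via the subordination formula for fractional powers, followed by Fubini's theorem. First, an elementary change of variables and integration by parts gives the spectral identity
\[
\lambda^\sigma \;=\; \frac{1}{|\Gamma(-\sigma)|}\int_0^\infty \bigl(1-e^{-t\lambda}\bigr)\,\frac{dt}{t^{1+\sigma}}, \qquad \lambda\geq |\rho|^2,\ 0<\sigma<1.
\]
Since the $L^2$-spectrum of $-\Delta$ is contained in $[|\rho|^2,\infty)$, the spectral theorem yields
\[
(-\Delta)^\sigma f(x) \;=\; \frac{1}{|\Gamma(-\sigma)|}\int_0^\infty \bigl(f(x)-e^{t\Delta}f(x)\bigr)\,\frac{dt}{t^{1+\sigma}},
\]
valid in $L^2(X)$ and pointwise for $f\in C_c^\infty(X)$ by the smoothing property of the heat semigroup.

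Next, I use $e^{t\Delta}f(x) = (f\ast h_t)(x) = \int_X f(z)\,h_t(z^{-1}x)\,dz$ together with the normalization $\int_X h_t(z^{-1}x)\,dz = 1$ from (\ref{prop-ht}) to rewrite
\[
f(x) - e^{t\Delta}f(x) \;=\; \int_X \bigl(f(x)-f(z)\bigr)\,h_t(z^{-1}x)\,dz.
\]
Inserting this into the previous display and formally swapping the $t$- and $z$-integrations produces the claimed identity, since by definition (\ref{defn-p0}) the resulting $t$-integral is $\int_0^\infty h_t(z^{-1}x)\,t^{-1-\sigma}\,dt = P_0^\sigma(z^{-1}x)$.

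The only nontrivial step is to justify this exchange of order, which by Fubini--Tonelli reduces to verifying
\[
\int_X |f(x)-f(z)|\,P_0^\sigma(z^{-1}x)\,dz \;<\; \infty.
\]
Splitting $X$ into $A = \{z:|z^{-1}x|\leq 1\}$ and $B=X\setminus A$, on $B$ the difference is bounded by $2\|f\|_\infty$ and Corollary \ref{cor-p0} gives $(1-\chi)P_0^\sigma \in L^1(X)$, handling the far part. On $A$, compact support and smoothness of $f$ yield the Lipschitz bound $|f(x)-f(z)| \leq L\,|z^{-1}x|$, while Theorem \ref{est-Pysigma-0} furnishes the local estimate $P_0^\sigma(y)\asymp |y|^{-n-2\sigma}$ for $|y|<1$. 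Passing to polar coordinates near the origin via the exponential map, convergence on $A$ reduces to that of $\int_0^1 r^{1-n-2\sigma}\cdot r^{n-1}\,dr = \int_0^1 r^{-2\sigma}\,dr$, which holds precisely when $2\sigma<1$. The main obstacle is thus the behavior near the diagonal: the singularity of $P_0^\sigma$ is compensated by the Lipschitz modulus of $f$ only when $\sigma<1/2$, which is exactly the hypothesis of the lemma; for $\sigma\geq 1/2$ the formula would instead require a principal-value interpretation.
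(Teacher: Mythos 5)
Your proposal is correct and follows essentially the same route as the paper's proof: the subordination identity plus the spectral theorem, the rewriting of $f-e^{t\Delta}f$ against the heat kernel, and a Fubini justification split into a far part (handled by Corollary \ref{cor-p0}) and a near part (handled by a gradient/Lipschitz bound against the local estimate $P_0^\sigma(x)\asymp |x|^{-n-2\sigma}$), with the same identification of $\sigma<1/2$ as the source of local integrability. The only cosmetic difference is that you perform the change of variable $z\mapsto z^{-1}x$ before interchanging the integrals rather than after, which changes nothing of substance.
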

\begin{proof}
Let $f\in C_c^\infty(X)$. Using the numerical identity
\bes
\lambda^\sigma =\frac{1}{|\Gamma(-\sigma)|} \int_{0}^\infty \left(1-e^{-t \lambda}\right)~\frac{dt}{t^{1+\sigma}}, \:\: \lambda>0,
\ees 
and the spectral theorem we have
\bes
(-\Delta)^\sigma f(x)=\frac{1}{|\Gamma(-\sigma)|} \int_{0}^\infty \left(f(x)-e^{t\Delta}f(x)\right)~\frac{dt}{t^{1+\sigma}}.
\ees
By (\ref{prop-ht}) it follows that
\be \label{u-f}
f(x)-e^{t\Delta} f(x)= f(x)-f\ast h_t(x)=\int_{X}(f(x)-f(xz^{-1}))~h_t(z)~dz.
\ee
Thus, we have the following representation
\bes
(-\Delta)^\sigma f(x) = \frac{1}{|\Gamma(-\sigma)|} \int_{0}^\infty \int_X \left(f(x)-f(xz^{-1})\right) h_t(z)~dz~\frac{dt}{t^{1+\sigma}}.
\ees
We now show that the right-hand side is absolutely integrable and hence, interchange of the order of integral is possible. Then the result follows by the change of variable $z\mapsto z^{-1}x$. To show absolute integrability let us define 
\beas
&& I_1=\frac{1}{|\Gamma(-\sigma)|} \int_{0}^\infty \int_{\{z\in X: |z|< 1\}} \left|f(x)-f(xz^{-1})\right| h_t(z)~dz~\frac{dt}{t^{1+\sigma}},\\
&& I_2=\frac{1}{|\Gamma(-\sigma)|} \int_{0}^\infty \int_{\{z\in X:|z|\geq 1\}}  \left|f(x)-f(xz^{-1})\right| h_t(z)~dz~\frac{dt}{t^{1+\sigma}}.
\eeas
For the integral $I_2$, we use the fact that $P_0^\sigma\in L^1(X)$ away from the origin (Corollary \ref{cor-p0}). Indeed, we have that 
\bes 
\int_{\{z\in X:|z|\geq 1\}} \int_{0}^\infty |f(x)-f(xz^{-1})|~h_t(z)~\frac{dt}{t^{1+\sigma}}~dz\leq  \|f\|_{L^\infty(X)}\int_{\{z\in X: |z|\geq 1\}} P_0^\sigma(z)~dz< \infty. 
\ees
Therefore, by Fubini's theorem $I_2$ is also finite. For $I_1$ we first observe by the fundamental theorem of calculus (see the proof of equation (34) in \cite{A}) that
\be \label{mvt}
|f(x)-f\left(x(\exp H)\right)|\leq |H|\int_{0}^1 |\nabla f\left(x \exp(sH)\right)|~ds \leq |H|~\|\nabla f\|_{L^\infty(X)},
\ee 
for $x\in X, H\in \mathfrak a$. Using the above estimate and the fact that $P_0^\sigma(x)\asymp |x|^{-n-2\sigma}$ around the origin (Theorem \ref{est-Pysigma-0}) it follows that
\beas
&& \int_{|z|< 1}\int_{0}^\infty |f(x)-f(xz^{-1})|~ h_t(z)~\frac{dt}{t^{1+\sigma}}~dz\\
&\leq& C \|\nabla f\|_{L^\infty(X)}~\int_{\{H\in\overline{\mathfrak a_+}: |H|<1\}} |H|~|H|^{-n-2\sigma}~J(\exp H)~dH\\
&\leq & C \|\nabla f\|_{L^\infty(X)} \int_{0}^1 r^{1-n-2\sigma}~r^{n-1}~dr,
\eeas
and the right-hand side is finite if $0< \sigma<1/2$. This completes the proof. 
\end{proof}

\begin{rem}
If $rank(X)=1$, then for $1/2\leq \sigma<1$ the integral formula in Lemma \ref{lem-int} exists in principal value sense. To see this, let $\mathfrak a= span\{H_0\}$ with $|H_0|=1$. Clearly, for $\sigma>0$, the integral $I_2$ is absolutely convergent and we can interchange the order of the integral. On the other hand the formula (\ref{integral-X}) yields
\bes
I_1=\frac{1}{|\Gamma(-\sigma)|} \int_{0}^\infty \int_{-1}^1 \big(f(x)-f\left(x \exp(-sH_0)\right)\big)~ h_t\left(\exp (sH_0)\right)~J\left(\exp(sH_0)\right)~ds\frac{dt}{t^{1+\sigma}}.
\ees
We now define $F(s):= f\left(x \exp(sH_0)\right)$, for $s\in \R$. Since $f\in C_c^\infty(X)$, it follows that for each $x\in X$, the function $F\in C_c^\infty(\R)$. By using the Taylor development of $F$, we get that
\bes
I_1= \frac{1}{|\Gamma(-\sigma)|} \int_{0}^\infty \int_{-1}^1 \left(s F'(x)+ \frac{s^2 F''(x)}{2!}+ \mathcal O(s^3)\right)~ h_t\left(\exp (sH_0)\right)~J\left(\exp(sH_0)\right)~ds\frac{dt}{t^{1+\sigma}}.
\ees 
Since the heat kernel $h_t$ and the Jacobian $J$ is even, the first order term vanishes. Hence, using the fact that $P_0^\sigma(x)\sim |x|^{-n-2\sigma}$, around the origin (Theorem \ref{est-Pysigma-0}), it follows that 
\beas
I_1 &\leq& C_f \int_{0}^\infty \int_{0}^1 s^2~ h_t\left(\exp(sH_0)\right)~s^{n-1}~ds~\frac{dt}{t^{1+\sigma}}= C_f \int_{0}^1 ~s^{n+1}~s^{-n-2\sigma}~ds,
\eeas
which is finite if $0< \sigma< 1$. Hence, the required integral formula exists as a principal value sense. For the case of higher rank symmetric spaces, neither the heat kernel $h_t\left(\exp(\cdot)\right)$ nor the Jacobian $J\left(\exp(\cdot)\right)$ is, in general, radial function on $\mathfrak a$. They are only Weyl group invariant. This is the main difficulty that we could not prove the integral formula in the lemma above for $1/2\leq \sigma<1$ in case of $rank(X)>1$.
\end{rem}

\begin{lem} \label{lem-int2}
Let $0 < \sigma < 1$. Then, for all $f \in H^\sigma(X)$
\bes
\langle (-\Delta)^\sigma f, f\rangle = \frac{1}{2|\Gamma(-\sigma)|}\int_X \int_X \left|f(z)-f(x)\right|^2P_0^\sigma(z^{-1}x)~dz ~dx.
\ees
\end{lem}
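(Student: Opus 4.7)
The plan is to pass from the spectral definition of $(-\Delta)^\sigma$ to the claimed double integral by inserting the heat semigroup representation of $\lambda^\sigma$ and then symmetrising.

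First, I would invoke the numerical identity
\[
\lambda^\sigma = \frac{1}{|\Gamma(-\sigma)|}\int_0^\infty \bigl(1-e^{-t\lambda}\bigr)\,\frac{dt}{t^{1+\sigma}}, \qquad \lambda>0,
\]
which is valid for all $\sigma\in(0,1)$ (this is what produced Lemma \ref{lem-int} for $\sigma<1/2$). Combined with the spectral theorem applied to $-\Delta$ on $L^2(X)$ and Fubini for the positive measure $d\|E_\lambda f\|^2$, this yields
\[
\langle (-\Delta)^\sigma f, f\rangle = \frac{1}{|\Gamma(-\sigma)|}\int_0^\infty \bigl\langle f - e^{t\Delta}f,\, f\bigr\rangle\, \frac{dt}{t^{1+\sigma}}.
\]
Since $f\in H^\sigma(X)$ the left-hand side is finite, so this integral is finite.

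Next I would rewrite each inner pairing as a symmetric double integral. Using $e^{t\Delta}f=f\ast h_t$ and the fact that $h_t$ is a $K$-biinvariant probability density on $X$ (so $\int_X h_t(z^{-1}x)\,dz=1$ for every $x$, and $h_t(w^{-1})=h_t(w)$ by $K$-biinvariance together with the Cartan involution), I expand
\[
\int_X\int_X |f(x)-f(z)|^2\, h_t(z^{-1}x)\,dz\,dx
= 2\|f\|_{L^2(X)}^2 - \langle f\ast h_t, f\rangle - \overline{\langle f\ast h_t, f\rangle}.
\]
The cross-term computation uses the symmetry $h_t(z^{-1}x)=h_t(x^{-1}z)$ to identify $\int_X \overline{f(z)}\,h_t(z^{-1}x)\,dz = (\bar f\ast h_t)(x)$. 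Because $\widehat{h_t}(\lambda)=e^{-t(|\lambda|^2+|\rho|^2)}$ is real, $\langle f\ast h_t, f\rangle\in\R$, so the identity collapses to
\[
\bigl\langle f - f\ast h_t,\, f\bigr\rangle = \tfrac{1}{2}\int_X\int_X |f(x)-f(z)|^2 \, h_t(z^{-1}x)\, dz\, dx.
\]

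Finally, plugging this into the spectral formula and swapping the $t$-integral with the $(x,z)$-integral (allowed by Tonelli, since the integrand is nonnegative) gives
\[
\langle (-\Delta)^\sigma f, f\rangle = \frac{1}{2|\Gamma(-\sigma)|}\int_X\int_X |f(x)-f(z)|^2 \left(\int_0^\infty h_t(z^{-1}x)\, \frac{dt}{t^{1+\sigma}}\right)\,dz\,dx,
\]
and the inner $t$-integral is precisely $P_0^\sigma(z^{-1}x)$ by the definition in (\ref{defn-p0}). Finiteness of the left-hand side for $f\in H^\sigma(X)$ promotes the Tonelli equality to a genuine identity of finite numbers.

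The mildly delicate point is the symmetry $h_t(w^{-1})=h_t(w)$ in the computation of the cross terms, together with the reality of $\langle f\ast h_t,f\rangle$; once these are in hand, the rest is an expansion of a square and an application of Tonelli. Unlike Lemma \ref{lem-int}, no pointwise estimate on $P_0^\sigma$ near the origin is required, which is why the range $\sigma\in(0,1)$ is reached in all ranks.
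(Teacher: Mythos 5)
Your argument is correct, and it reaches the full statement by a genuinely different and more direct route than the paper. The paper first proves the pointwise representation of Lemma \ref{lem-int}, which is only available for $0<\sigma<1/2$ (its absolute convergence uses the gradient bound (\ref{mvt}) and the near-origin behaviour of $P_0^\sigma$), symmetrizes it for $f\in C_c^\infty(X)$, then extends in $\sigma$ to all of $(0,1)$ by an analytic continuation (Morera) argument that requires the a priori finiteness (\ref{int-formula}) of the double integral, and finally passes from $C_c^\infty(X)$ to $H^\sigma(X)$ by density. You instead stay at the level of quadratic forms: the subordination identity plus the spectral theorem reduces $\langle(-\Delta)^\sigma f,f\rangle$ to $\frac{1}{|\Gamma(-\sigma)|}\int_0^\infty\langle f-e^{t\Delta}f,f\rangle\,t^{-1-\sigma}dt$, the classical expansion $\langle f-f\ast h_t,f\rangle=\tfrac12\iint|f(x)-f(z)|^2h_t(z^{-1}x)\,dz\,dx$ (valid for every $f\in L^2(X)$, using only $\int_X h_t=1$ and the reality of $\langle f\ast h_t,f\rangle$, clear from (\ref{ht-ft})), and Tonelli for the manifestly nonnegative integrand then give the identity at once, the inner $t$-integral being exactly $P_0^\sigma$ from (\ref{defn-p0}). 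This buys you all $\sigma\in(0,1)$, all ranks, and all $f\in H^\sigma(X)$ in one stroke --- indeed the identity holds in $[0,\infty]$ for every $f\in L^2(X)$ --- with no kernel asymptotics, no continuation in $\sigma$, and no approximation argument. What the paper's longer route buys is the pointwise formula of Lemma \ref{lem-int} itself (of independent interest) and the explicit finiteness estimate (\ref{int-formula}), which it reuses in the ground-state representation; your approach recovers the bilinear version needed there by polarization. One cosmetic remark: the identification $\int_X\overline{f(z)}\,h_t(z^{-1}x)\,dz=(\bar f\ast h_t)(x)$ is just the definition of convolution and does not actually require the symmetry $h_t(w^{-1})=h_t(w)$, though that symmetry is true and harmless.
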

\begin{proof}
We first prove that for $0< \sigma< 1$ and $f \in C_c^\infty(X)$ the quantity 
\be \label{int-formula}
\frac{1}{2|\Gamma(-\sigma)|}\int_X \int_X \left|f(z)-f(x)\right|^2P_0^\sigma(z^{-1}x)~dz ~dx< \infty.
\ee
To show this let us assume $supp~ f\subset {\bf B}(o, m)$ for some $m>1$ and define
\beas
&& I_1= \frac{1}{2|\Gamma(-\sigma)|}\int_{{\bf B}(o, 2m)} \int_X \left|f(z)-f(x)\right|^2P_0^\sigma(z^{-1}x)~dz ~dx,\\
&& I_2 = \frac{1}{2|\Gamma(-\sigma)|} \int_{X\backslash {\bf B}(o, 2m)} \int_{X} \left|f(z)-f(x)\right|^2~P_0^\sigma(z^{-1}x)~dz ~dx.
\eeas
Since $supp~f \subset {\bf B}(o, m)$ it follows that
\beas
I_2 &=& \frac{1}{2|\Gamma(-\sigma)|} \int_{X\backslash {\bf B}(o, 2m)} \int_{{\bf B}(o, m)} \left|f(z)\right|^2~P_0^\sigma(z^{-1}x)~dz ~dx\\
&\leq& \frac{\|f\|_{L^\infty(X)}^2}{2|\Gamma(-\sigma)|} \int_{{\bf B}(o, m)} \int_{X \backslash {\bf B}(o, 2m)} P_0^\sigma(z^{-1}x)~dx~dz\\
&\leq& \frac{\|f\|_{L^\infty(X)}^2}{2|\Gamma(-\sigma)|} |{\bf B}(o, m)| \int_{X \backslash {\bf B}(o, m)} P_0^\sigma(x)~dx< \infty.
\eeas
The last term is finite because of the fact that $P_0^\sigma$ is integrable away from the origin (Corollary \ref{cor-p0}). To show that $I_1$ is finite we write it as follows
\beas
I_1 &=&\int_{{\bf B}(o, 2m)} \int_{{\bf B}(0, 3m)} \left|f(z)-f(x)\right|^2P_0^\sigma(z^{-1}x)~dz ~dx\\
&& + \int_{{\bf B}(o, 2m)} \int_{X\backslash{\bf B}(0, 3m)} \left|f(z)-f(x)\right|^2P_0^\sigma(z^{-1}x)~dz ~dx.
\eeas
Using change of variable $z\mapsto xz^{-1}$ in the first integral, the estimate (\ref{mvt}) and the asymptotic estimates of $P_0^\sigma$ in Theorem \ref{est-Pysigma-0} it follows that
\beas
I_1 &\leq&\int_{{\bf B}(o, 2m)} \int_{{\bf B}(o, 5m)} \left|f(xz^{-1})-f(x)\right|^2P_0^\sigma(z)~dz ~dx\\
&& + \int_{{\bf B}(o, 2m)} \int_{X\backslash{\bf B}(o, 3m)} \left|f(z)-f(x)\right|^2P_0^\sigma(z^{-1}x)~dz ~dx \\
&\leq & C \|\nabla f\|_{L^\infty(X)}^2 \int_{{\bf B}(o, 2m)}~dx~\int_{\{H\in \overline{\mathfrak a_+}: |H|<5m\}} |H|^2 ~H^{-n-2\sigma}~J(\exp H)~dH\\
&& + C \int_{{\bf B}(o, 2m)}   \|f\|_{L^\infty(X)}^2~\int_{X\backslash {\bf B}(o, 3m)} P_0^\sigma(z^{-1} x)~dz ~dx\\
&\leq& C\|\nabla f\|_{L^\infty(X)}^2 \int_{0}^{5m} r^{2-n-2\sigma}~r^{n-1}~dr + C\|f\|_{L^\infty(X)}^2  \int_{X\backslash {\bf B}(o, m)} P_0^\sigma(z)~dz.
\eeas
The first term of the above quantity is finite provided $\sigma< 1$ and the second one finite by Corollary \ref{cor-p0}. This completes the proof of the fact the quantity in (\ref{int-formula}) is finite. 

Let $0< \sigma< 1/2$ and $f\in C_c^\infty(X)$. By the integral representation in Lemma \ref{lem-int} it follows that
\bes
\left\langle (-\Delta)^\sigma f, f\right\rangle = \frac{1}{|\Gamma(-\sigma)|}\int_X \int_X \left(f(x)-f(z)\right)P_0^\sigma(z^{-1}x) ~\overline{f(x)}~dz ~dx.
\ees
As the kernel $P_0^\sigma$ is symmetric, that is $P_0^\sigma(x)= P_0^\sigma(x^{-1})$, the above quantity is also equals to 
\bes
\frac{1}{|\Gamma(-\sigma)|}\int_X \int_X \left(f(z)-f(x)\right)P_0^\sigma(z^{-1}x) ~\overline{f(z)}~dx ~dz.
\ees
By adding them up we get that
\bes 
\langle (-\Delta)^\sigma f, f\rangle = \frac{1}{2|\Gamma(-\sigma)|}\int_X \int_X \left|f(z)-f(x)\right|^2P_0^\sigma(z^{-1} x)~dz ~dx.
\ees
The justification of the change of order of integration follows from (\ref{int-formula}). By the analytic continuation, we extend the range of $\sigma$ to $0< \sigma<1$ provided $f\in C_c^\infty(X)$. Indeed, the functions $\sigma \mapsto -\Gamma(-\sigma)$  and $\sigma \mapsto \langle (-\Delta)^\sigma f, f \rangle$ are holomorphic on $S=\{w\in \C: 0<\Re w <1\}$. Hence their product $F(\sigma)= -\Gamma(-\sigma)~ \langle (-\Delta)^\sigma f, f \rangle$ is also holomorphic on $S$. On the other hand, since right-hand side of (\ref{int-formula}) is finite for $0<\sigma<1$, by the Morera's theorem it follows that the function $G$ defined by
\bes
G(\sigma)=\frac{1}{2} \int_X \int_X \left|f(z)-f(x)\right|^2P_0^\sigma(z^{-1}x)~dz ~dx.
\ees
is holomorphic on $S$. Since $F(\sigma)=G(\sigma)$ for $0< \sigma<1/2$ we get that $F(\sigma)=G(\sigma)$ for all $\sigma\in S$, in particular, for $0< \sigma<1$.

By approximating any function $f\in H^\sigma(X)$ by a sequence of functions $f_k\in C_c^\infty(X)$, we complete the proof. This uses the fact that $P_0^\sigma(x)\asymp |x|^{-n-2\sigma}$ around the origin and the rest follows as in the proof of Lemma 5.1 in \cite{RT2}. 
\end{proof}
We now establish ground state representation for the operator $(-\Delta)^\sigma$ as a consequence of the integral representation proved in Lemma \ref{lem-int2}. As in the Euclidean case, we define the following error term. For $0<\sigma<1$ and $y>0$ we let,
\bes
H_y^\sigma[F]= \langle (-\Delta)^\sigma F, F\rangle -\frac{4^\sigma \Gamma(\sigma)}{y^{2\sigma} \Gamma(-\sigma)} \int_{X} |F(x)|^2 \left(\frac{P_y^\sigma(x)}{P_y^{-\sigma}(x)}\right)\,dx.
\ees
\begin{thm} \label{thmgsr}
Let $0<\sigma<1$ and $y>0$. If $F\in C_c^{\infty}(X)$ and $G(x)= F(x)~\left(P^{-\sigma}_y(x)\right)^{-1}$ then
\bes
H_y^\sigma[F]= \frac{1}{2|\Gamma(-\sigma)|}  \int_X\int_X |G(x)-G(z)|^2~P_y^{-\sigma}(x) P_y^{-\sigma}(z)~P_0^{-\sigma}(z^{-1}x)~dx~dz.
\ees
\end{thm}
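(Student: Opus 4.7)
The plan is to substitute $F(x)=G(x)\,P_y^{-\sigma}(x)$ directly into the integral representation of $\langle(-\Delta)^\sigma F,F\rangle$ furnished by Lemma \ref{lem-int2} and decompose the integrand via an elementary algebraic identity. Writing $p:=P_y^{-\sigma}$ for brevity, the identity
\begin{equation*}
|G(z)p(z)-G(x)p(x)|^2 = |G(z)-G(x)|^2\,p(x)p(z) + |G(x)|^2\,p(x)\bigl(p(x)-p(z)\bigr) + |G(z)|^2\,p(z)\bigl(p(z)-p(x)\bigr),
\end{equation*}
which one checks by direct expansion, splits the integrand of Lemma \ref{lem-int2} into three pieces.

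The first piece contributes precisely the expression on the right-hand side of the claim. For the other two pieces, the $K$-biinvariance (hence symmetry under inversion) of $P_0^\sigma$, which gives $P_0^\sigma(z^{-1}x)=P_0^\sigma(x^{-1}z)$, together with the substitution $x\leftrightarrow z$, shows that they are equal. Combined, they produce
\begin{equation*}
\frac{1}{|\Gamma(-\sigma)|}\int_X |G(x)|^2 p(x)\,\Bigl(\int_X \bigl(p(x)-p(z)\bigr)P_0^\sigma(z^{-1}x)\,dz\Bigr)\,dx.
\end{equation*}

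By the integral representation in Lemma \ref{lem-int}, the inner integral is $|\Gamma(-\sigma)|\,(-\Delta)^\sigma p(x)$, and Lemma \ref{lem-I} identifies this with $|\Gamma(-\sigma)|\cdot\tfrac{4^\sigma\Gamma(\sigma)}{y^{2\sigma}\Gamma(-\sigma)}P_y^\sigma(x)$. Since $|F(x)|^2/p(x)=|G(x)|^2 p(x)$ and $|\Gamma(-\sigma)|/\Gamma(-\sigma)=-1$ for $0<\sigma<1$, the combined remainder collapses to
\begin{equation*}
\frac{4^\sigma\Gamma(\sigma)}{y^{2\sigma}\Gamma(-\sigma)}\int_X |F(x)|^2\,\frac{P_y^\sigma(x)}{P_y^{-\sigma}(x)}\,dx,
\end{equation*}
which is exactly the term subtracted off in the definition of $H_y^\sigma[F]$. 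Rearranging gives the claimed formula.

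The main technical obstacle is applying Lemma \ref{lem-int} to $p=P_y^{-\sigma}$, since that lemma was stated for $f\in C_c^\infty(X)$ and, in higher rank, only for $0<\sigma<1/2$, whereas $p$ is merely smooth with controlled decay and we want the full range $0<\sigma<1$. I would overcome this by approximating $p$ by a sequence in $C_c^\infty(X)$ in the $H^\sigma(X)$ norm (using that $p\in H^\sigma(X)$, as verified in Section 3), relying on the pointwise asymptotics of $P_y^{-\sigma}$ and $P_0^\sigma$ from Theorems \ref{est-Pysigma} and \ref{est-Pysigma-0} to control the resulting error terms and to justify Fubini on the double/quadruple integrals. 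The extension from $0<\sigma<1/2$ to $0<\sigma<1$ then proceeds by analytic continuation in $\sigma$, following the same template used at the end of the proof of Lemma \ref{lem-int2}: both sides of the putative identity are seen to be holomorphic in the strip $\{0<\Re\sigma<1\}$ (using the integrability of $|G(x)-G(z)|^2 p(x)p(z)P_0^\sigma(z^{-1}x)$ via the $|z^{-1}x|^{-n-2\sigma}$ blowup together with $(G(x)-G(z))^2\lesssim|z^{-1}x|^2$ near the diagonal and the exponential decay of $p$ and $P_0^\sigma$ at infinity), and they agree on $(0,\tfrac12)$.
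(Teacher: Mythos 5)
Your algebraic splitting and the identification of the cross term are exactly the computation the paper performs, so for $0<\sigma<1/2$ your argument is essentially a rearrangement of the paper's proof. The genuine gap is in the range $1/2\le\sigma<1$. Your route requires, for each fixed $x$, the pointwise identity $\int_X\bigl(p(x)-p(z)\bigr)P_0^\sigma(z^{-1}x)\,dz=|\Gamma(-\sigma)|\,(-\Delta)^\sigma p(x)$ with $p=P_y^{-\sigma}$, and more basically it requires splitting the absolutely convergent integral of Lemma \ref{lem-int2} into three pieces that are integrated separately. Neither step survives past $\sigma=1/2$: since $p$ is merely Lipschitz-regular to first order, near the diagonal the cross-term integrands are of size $d(x,z)^{1-n-2\sigma}$, so the inner integral and each individual cross-term double integral fail to converge absolutely once $2\sigma\ge 1$; only the combined expression $\bigl(|F|^2/p\bigr)(x)-\bigl(|F|^2/p\bigr)(z)$ times $p(x)-p(z)$ has the second-order cancellation needed for $\sigma<1$. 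This is precisely the obstruction the paper records in the Remark after Lemma \ref{lem-int}: in rank one the divergence can be tamed by a principal-value argument using radiality, but in higher rank the authors explicitly could not establish the pointwise formula for $\sigma\ge1/2$. Consequently, no approximation of $p$ in $H^\sigma(X)$ can repair the step, because the integral you want to identify with $(-\Delta)^\sigma p(x)$ is simply divergent in that range.

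Your fallback, analytic continuation in $\sigma$, could in principle bridge $[1/2,1)$, but it is far more delicate than the "template" at the end of Lemma \ref{lem-int2}, where $\sigma$ entered only through $P_0^\sigma$ and $\lambda^\sigma$ while $|f(x)-f(z)|^2$ was fixed. Here $\sigma$ also enters through $G=F/P_y^{-\sigma}$, through the weights $P_y^{\pm\sigma}$, through the ratio $P_y^\sigma/P_y^{-\sigma}$ in $H_y^\sigma[F]$, and through modulus-squares of $\sigma$-dependent quantities, which are not holomorphic as written; one must replace them by holomorphic surrogates agreeing on the real axis and control nonvanishing of the continued $P_y^{-\sigma}$ on $\operatorname{supp}F$ in a complex neighbourhood. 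None of this is addressed in your sketch. The paper avoids the whole issue by never leaving the level of the quadratic form: it inserts $f=|F|^2\,(P_y^{-\sigma})^{-1}\in C_c^\infty(X)$ and $g=P_y^{-\sigma}\in H^\sigma(X)$ into the polarized form (\ref{reptn1}) of Lemma \ref{lem-int2} (valid for all $0<\sigma<1$), evaluates the left side as $\langle f,(-\Delta)^\sigma g\rangle$ by self-adjointness together with Lemma \ref{lem-I}, and expands the right side with the same algebraic identity you use; this is the modification you need to make your argument complete on the full range $0<\sigma<1$.
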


\begin{proof} 
Let $f, g\in H^\sigma(X)$. From Lemma \ref{lem-int2} we get that
\be \label{reptn1}
\left \langle (- \Delta)^\sigma f, g\right\rangle = \frac{1}{2|\Gamma(-\sigma)|}\int_X \int_X \left(f(z)-f(x)\right)~\overline{\left(g(z)-g(x)\right)}~P_0^\sigma(z^{-1}x)~dz ~dx.
\ee
Let us assume $g=P_y^{-\sigma}$, and $f(x)=|F(x)|^2~g(x)^{-1}$. Then the right-hand side of (\ref{reptn1}) reduces to
\bea \label{reptn4}
&& \frac{1}{2|\Gamma(-\sigma)|} \int_{X}\int_X \left(\frac{|F(z)|^2}{g(z)}-\frac{|F(x)|^2}{g(x)}\right)~\overline{\left(g(z)- g(x)\right)}~P_0^\sigma(z^{-1}x)~dz ~dx\\
&=& \frac{1}{2|\Gamma(-\sigma)|} \int_X\int_X \left(|F(x)-F(z)|^2- \left|\frac{F(x)}{g(x)}-\frac{F(z)}{g(z)}\right|^{2} ~ g(x)g(z)\right)~ P_0^\sigma(z^{-1}x)~dz ~dx \nonumber.
\eea
Also, using Lemma \ref{lem-I} the left-hand side of (\ref{reptn1}) reduces to
\beas
\left\langle (-\Delta)^\sigma f, g\right\rangle &=& \left\langle (-\Delta)^\sigma(|F(x)|^2/g(x)), g(x) \right\rangle\\
&=& \left\langle \left(|F(x)|^2/g(x)\right), (-\Delta)^\sigma P_y^{-\sigma}\right\rangle\\
&=& \frac{4^\sigma \Gamma(\sigma)}{y^{2\sigma} \Gamma(-\sigma)}  \left\langle (|F(x)|^2/g(x)), P_y^{\sigma}\right\rangle\\
&=&\frac{4^{\sigma} \Gamma(\sigma)}{y^{2\sigma} \Gamma(-\sigma)} \int_{X} |F(x)|^2 \frac{ P_y^\sigma(x)}{P_y^{-\sigma}(x)}~dx.
\eeas
Therefore, equating the left-hand and right-hand sides of the equation (\ref{reptn1}) we have
\beas
&& \frac{4^{\sigma} \Gamma(\sigma)}{y^{2\sigma} \Gamma(-\sigma)} \int_{X} |F(x)|^2 \frac{ P_y^\sigma(x)}{P_y^{-\sigma}(x)}~dx = \frac{1}{2|\Gamma(-\sigma)|}\int_X \int_X \left|F(x)-F(z)\right|^2P_0^\sigma(z^{-1}x)~dx ~dz \\
&-& \frac{1}{2|\Gamma(-\sigma)|} \int_X\int_X \left|\frac{F(x)}{g(x)}-\frac{F(z)}{g(z)}\right|^{2} ~ g(x)g(z)~ P_0^\sigma(z^{-1}x)~dz ~dx. 
\eeas
By Lemma \ref{lem-int2} the first term in the right-hand side of the above equation is equals to $\langle (-\Delta)^\sigma F, F \rangle$.  Hence, it follows that
\beas
&& \langle (-\Delta)^\sigma F, F \rangle - \frac{4^\sigma \Gamma(\sigma)}{y^{2\sigma} \Gamma(-\sigma)}  \int_X |F(x)|^2 \frac{P_y^\sigma(x)}{P_y^{-\sigma}(x)}~dx\\
&=& \frac{1}{2|\Gamma(-\sigma)|}  \int_X\int_X |G(x)-G(z)|^2~P_y^{-\sigma}(x) P_y^{-\sigma}(x)~P_0^{-\sigma}(z^{-1}x)~dx~dz,  
\eeas
where $G(x)= F(x)P_y^{-\sigma}(x)^{-1}$. This completes the proof.
\end{proof}

We have already observed that for $0 < \sigma < 1$,  $\Gamma(-\sigma)<0$ and hence $P_y^{-\sigma}\leq 0$.   Therefore, as a corollary of Theorem \ref{thmgsr}  we get the following result.
\begin{cor} \label{Hardy-inhomogeneous-1}
For a fixed $y>0$ and $0<\sigma<1$ we have
\bes
\left\langle (-\Delta)^\sigma F, F\right\rangle \geq  \frac{4^\sigma}{y^{2\sigma}} \int_{X} |F(x)|^2 \left(\frac{\Gamma(\sigma) }{\Gamma(-\sigma)}\frac{P_y^\sigma(x)}{P_y^{-\sigma}(x)}\right)\,dx, \:\: \textit{ for } F\in H^\sigma(X).
\ees
\end{cor}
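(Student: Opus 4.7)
The proof is a short corollary of the ground state representation Theorem \ref{thmgsr}, extracted by inspecting the sign of every factor appearing in the right--hand side there and then passing to the limit in a density argument. I sketch how I would organise it.

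First, for $F\in C_c^\infty(X)$ and $G=F/P_y^{-\sigma}$, Theorem \ref{thmgsr} gives
\begin{equation*}
H_y^\sigma[F]=\frac{1}{2|\Gamma(-\sigma)|}\int_X\int_X |G(x)-G(z)|^2\,P_y^{-\sigma}(x)\,P_y^{-\sigma}(z)\,P_0^{-\sigma}(z^{-1}x)\,dx\,dz.
\end{equation*}
The key observation is that the integrand is pointwise non--negative. Indeed, $|G(x)-G(z)|^2\geq 0$ trivially; the factor $P_0^{-\sigma}(z^{-1}x)$ is non--negative as a positive integral of the heat kernel against a non--negative measure; and, as noted just before Lemma \ref{lem-I}, for $0<\sigma<1$ one has $\Gamma(-\sigma)=\Gamma(1-\sigma)/(-\sigma)<0$, which together with the positivity of $h_t$ and $e^{-y^2/4t}$ forces $P_y^{-\sigma}(x)\leq 0$ for every $x\in X$; consequently the product $P_y^{-\sigma}(x)P_y^{-\sigma}(z)\geq 0$. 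Since $|\Gamma(-\sigma)|>0$, the entire right--hand side is non--negative, so $H_y^\sigma[F]\geq 0$. Rearranging the definition of $H_y^\sigma[F]$ and absorbing the sign $|\Gamma(-\sigma)|=-\Gamma(-\sigma)$ into the quotient yields the claimed inequality
\begin{equation*}
\langle (-\Delta)^\sigma F,F\rangle \geq \frac{4^\sigma}{y^{2\sigma}}\int_X |F(x)|^2\left(\frac{\Gamma(\sigma)}{\Gamma(-\sigma)}\frac{P_y^\sigma(x)}{P_y^{-\sigma}(x)}\right)dx
\end{equation*}
for every $F\in C_c^\infty(X)$; note that the weight is manifestly non--negative, since $\Gamma(\sigma)>0$, $\Gamma(-\sigma)<0$, $P_y^\sigma\geq 0$, $P_y^{-\sigma}\leq 0$, so the two minus signs cancel.

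To extend the inequality to $F\in H^\sigma(X)$, I would choose a sequence $F_k\in C_c^\infty(X)$ with $F_k\to F$ in $H^\sigma(X)$ (density is standard, and was already invoked at the end of Lemma \ref{lem-int2}). The left--hand side converges: $\langle(-\Delta)^\sigma F_k,F_k\rangle\to \langle(-\Delta)^\sigma F,F\rangle$ by the definition of the $H^\sigma$--norm in terms of the spectral measure. For the right--hand side, passing to a subsequence we may assume $F_k\to F$ almost everywhere on $X$, so $|F_k|^2\to |F|^2$ almost everywhere; since the weight
\begin{equation*}
W(x)=\frac{\Gamma(\sigma)}{\Gamma(-\sigma)}\frac{P_y^\sigma(x)}{P_y^{-\sigma}(x)}
\end{equation*}
is a fixed non--negative measurable function, Fatou's lemma yields
\begin{equation*}
\int_X |F(x)|^2 W(x)\,dx \leq \liminf_{k\to\infty}\int_X |F_k(x)|^2 W(x)\,dx,
\end{equation*}
and combining this with the inequality for each $F_k$ gives the desired bound for $F$.

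The only potentially delicate point is the density/limit step: one must be sure that the weighted integral on the right is meaningful (possibly $+\infty$) for general $F\in H^\sigma(X)$, but since $W\geq 0$, the integral is well defined in $[0,\infty]$ and the Fatou argument goes through without any extra integrability hypothesis. Everything else is a direct algebraic consequence of Theorem \ref{thmgsr} and the sign discussion above.
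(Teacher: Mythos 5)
Your proof is correct and follows essentially the same route as the paper: the inequality is read off from Theorem \ref{thmgsr} by observing that $\Gamma(-\sigma)<0$ forces $P_y^{-\sigma}\leq 0$, so the product $P_y^{-\sigma}(x)P_y^{-\sigma}(z)$ and the kernel $P_0^{\pm\sigma}(z^{-1}x)$ make the error term nonnegative. The only addition is that you spell out the passage from $C_c^\infty(X)$ to $H^\sigma(X)$ via density and Fatou's lemma, a limiting step the paper leaves implicit, and your treatment of it (convergence of the $H^\sigma$-quadratic form on the left, a.e.\ convergence and nonnegativity of the weight on the right) is sound.
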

\begin{rem}
By Lemma \ref{lem-I} it follows that the equality in the expression above is achieved for the function $F=P_y^{-\sigma}$. Therefore, the constant $4^\sigma \Gamma(\sigma)/ y^{2\sigma}|\Gamma(-\sigma)|$ appeared in the corollary above is sharp.  
\end{rem}

Now, using the estimate of $P_y^\sigma$ (Theorem \ref{est-Pysigma}) in Corollary \ref{Hardy-inhomogeneous-1} we get Theorem \ref{Hardy-inhomogeneous}.
\begin{proof}[Proof of Theorem \ref{Hardy-inhomogeneous}]
From Theorem \ref{est-Pysigma} we have 
\begin{equation}\nonumber
\frac{\Gamma(\sigma) }{\Gamma(-\sigma)}\frac{P_y^\sigma(x)}{P_y^{-\sigma}(x)} \asymp \left\{ \begin{array}{lll}
\frac{y^{4\sigma}}{(|x|^2 + y^2)^\sigma} & \text{ if } |x|^2 + y^2\geq 1 \\

\frac{y^{4\sigma}}{(|x|^2 + y^2)^{2\sigma}} & \text{ if } |x|^2 + y^2< 1 .
\end{array}    \right.
\end{equation}
Therefore, from Corollary \ref{Hardy-inhomogeneous-1} we have 
\begin{equation}
\nonumber
\left\langle (-\Delta)^\sigma F, F\right\rangle \geq   C_\sigma\, y^{2\sigma} \left( \int_{\left\{x:|x|^2+y^2<1\right\}} \frac{|F(x)|^2}{(y^2 +|x|^2)^{2\sigma}}\,dx + \int_{\left\{x:|x|^2+y^2\geq 1\right\}} \frac{|F(x)|^2}{(y^2 +|x|^2)^{\sigma}}\,dx\right).
\end{equation}

\end{proof}

We now prove Hardy's inequality corresponding to the homogeneous weight function (Theorem \ref{Hardy-homogeneous}). To prove this theorem  we need the following expression of the error term. 
\begin{thm}\label{ground-state-0}
Let $0<\sigma<1$ and $\alpha> (2\sigma+n)/4$. Then for  $F\in C_c^{\infty}(X)$ and $G(x)= F(x)~\left(P_0^{-\alpha}(x)\right)^{-1}$ we have 
\bea
&& \left\langle (-\Delta)^\sigma F, F\right\rangle -\frac{\Gamma(\alpha)}{\Gamma(\alpha -\sigma)} \int_{X} |F(x)|^2 \left(\frac{P_0^{\sigma-\alpha}(x)}{P_0^{-\alpha}(x)}\right)\,dx \nonumber \\&=& \frac{1}{2|\Gamma(-\sigma)|}  \int_X\int_X |G(x)-G(z)|^2~P_0^{-\alpha}(x) P_0^{-\alpha}(z)~P_0^{\sigma}(z^{-1}x)~dx~dz,
\eea
where the function $P_0^{-\alpha}$ is defined by (\ref{defn-p0}).
\end{thm}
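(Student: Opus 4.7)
The plan is to mimic the proof of Theorem \ref{thmgsr}, replacing the inhomogeneous reference kernel $P_y^{-\sigma}$ by the homogeneous kernel $P_0^{-\alpha}$. The only ingredient genuinely specific to the inhomogeneous setting and that has to be re-established here is the analogue of Lemma \ref{lem-I}, namely the identity
\be \label{eq:key-homog}
(-\Delta)^{\sigma} P_0^{-\alpha}(x) = \frac{\Gamma(\alpha)}{\Gamma(\alpha-\sigma)}\,P_0^{\sigma-\alpha}(x).
\ee
To establish (\ref{eq:key-homog}) I would follow the template of Lemma \ref{lem-I}. The starting point is the elementary Gamma function identity
\bes
\Gamma(\alpha)\,\lambda^{\sigma-\alpha} = \lambda^{\sigma}\int_0^\infty e^{-t\lambda}\,t^{\alpha-1}\,dt = \frac{\Gamma(\alpha)}{\Gamma(\alpha-\sigma)}\int_0^\infty e^{-t\lambda}\,t^{\alpha-\sigma-1}\,dt,
\ees
valid for $\lambda>0$ and $0<\sigma<\alpha$. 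Applied via the spectral theorem to $f=h_{t_1}$ and followed by $t_1\to 0^+$ (the dominated convergence argument being identical to the one in Lemma \ref{lem-I}), this produces the kernel-level identity
\bes
\int_0^\infty (-\Delta)^{\sigma} h_t(x)\,t^{\alpha-1}\,dt = \frac{\Gamma(\alpha)}{\Gamma(\alpha-\sigma)} \int_0^\infty h_t(x)\,t^{\alpha-\sigma-1}\,dt,
\ees
which is (\ref{eq:key-homog}) upon recalling the definition of $P_0^{\beta}$.

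With (\ref{eq:key-homog}) in hand, I would apply the polarised form of Lemma \ref{lem-int2},
\bes
\langle (-\Delta)^{\sigma} f, g\rangle = \frac{1}{2|\Gamma(-\sigma)|}\int_X\int_X (f(z)-f(x))\,\overline{(g(z)-g(x))}\,P_0^{\sigma}(z^{-1}x)\,dz\,dx,
\ees
with $g=P_0^{-\alpha}$ and $f=|F|^2/g$. The same algebraic identity used in the proof of Theorem \ref{thmgsr},
\bes
\left(\frac{|F(z)|^2}{g(z)}-\frac{|F(x)|^2}{g(x)}\right)(g(z)-g(x)) = |F(z)-F(x)|^2 - |G(z)-G(x)|^2\,g(z)g(x),
\ees
with $G=F/g$, converts the right-hand side into the difference of $\langle(-\Delta)^{\sigma} F, F\rangle$ (re-identified via Lemma \ref{lem-int2}) and the desired positive double integral. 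On the left-hand side, self-adjointness of $(-\Delta)^{\sigma}$ together with (\ref{eq:key-homog}) yields
\bes
\langle f,(-\Delta)^{\sigma} g\rangle = \frac{\Gamma(\alpha)}{\Gamma(\alpha-\sigma)}\int_X |F(x)|^2\,\frac{P_0^{\sigma-\alpha}(x)}{P_0^{-\alpha}(x)}\,dx,
\ees
and rearranging produces the claimed identity.

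The role of the hypothesis $\alpha>(n+2\sigma)/4$ is essentially integrability. A Plancherel computation using (\ref{clambdaest}) shows that $\widehat{P_0^{-\alpha}}(\lambda)=\Gamma(\alpha)(|\lambda|^2+|\rho|^2)^{-\alpha}$ lies in $L^2(\mathfrak a^\ast,(|\lambda|^2+|\rho|^2)^\sigma|{\bf c}(\lambda)|^{-2}\,d\lambda)$ precisely when $\alpha>(n+2\sigma)/4$, i.e.\ precisely when $P_0^{-\alpha}\in H^{\sigma}(X)$. Combined with the asymptotic estimates of Theorem \ref{est-Pysigma-0} (from which $P_0^{\sigma-\alpha}/P_0^{-\alpha}\asymp |x|^{-2\sigma}$ near the origin and $\asymp|x|^{-\sigma}$ at infinity), this controls both the error integral $\int_X |F|^2 P_0^{\sigma-\alpha}/P_0^{-\alpha}\,dx$ and the double integral on the right-hand side of the claim. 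The main obstacle I anticipate is justifying the substitution $f=|F|^2/P_0^{-\alpha}$ in Lemma \ref{lem-int2}: since $P_0^{-\alpha}(x)\asymp|x|^{2\alpha-n}$ near the origin, this $f$ is generally not a member of $C_c^\infty(X)$, so one must verify that it lies in $H^{\sigma}(X)$ and that the integral formula of Lemma \ref{lem-int2} still applies. The natural remedy is a truncation-and-approximation argument: replace $1/P_0^{-\alpha}$ by a smooth cut-off that vanishes in a small neighbourhood of the origin, apply Lemma \ref{lem-int2} to the resulting compactly supported smooth function, and pass to the limit using Theorem \ref{est-Pysigma-0} to show that the contributions from the cut-off region vanish under the hypothesis $\alpha>(n+2\sigma)/4$.
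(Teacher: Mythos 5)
Your proposal is correct and follows essentially the same route as the paper: both substitute $g=P_0^{-\alpha}$ and $f=|F|^2/g$ into the polarized integral representation of Lemma \ref{lem-int2} and use the same algebraic identity as in Theorem \ref{thmgsr}, with the hypothesis $\alpha>(2\sigma+n)/4$ serving exactly to put $P_0^{-\alpha}$ in $H^\sigma(X)$. The only cosmetic difference is that the paper evaluates $\left\langle (-\Delta)^\sigma f, g\right\rangle$ directly on the spherical Fourier side via $\widehat{P_0^{-\alpha}}(\lambda)=\Gamma(\alpha)\left(|\lambda|^2+|\rho|^2\right)^{-\alpha}$ and Parseval, instead of first proving your kernel identity $(-\Delta)^\sigma P_0^{-\alpha}=\frac{\Gamma(\alpha)}{\Gamma(\alpha-\sigma)}P_0^{\sigma-\alpha}$ in the style of Lemma \ref{lem-I} (the two are the same spectral fact); your closing caveat about justifying the substitution $f=|F|^2/P_0^{-\alpha}$ flags a point the paper itself passes over silently.
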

\begin{proof}
Since $\alpha>n/4$, we observe from Theorem \ref{est-Pysigma-0} that $P_0^{-\alpha} \in L^2(X)$.  As before by Fubini theorem the spherical Fourier transform of $P_0^{-\alpha}$ is given by 
\bes
\widehat{P_0^{-\alpha}}(\lambda) = \int_0^\infty e^{-t(|\lambda|^2 + |\rho|^2)} \frac{dt}{t^{1-\alpha}} = \Gamma(\alpha)~\left(|\lambda|^2 + |\rho|^2\right)^{-\alpha}, \:\: \lambda\in \mathfrak a^\ast.
\ees
Since $\alpha> (2\sigma+n)/4$, it follows that $P_0^{-\alpha}\in H^\sigma(X)$. Indeed, using (\ref{clambdaest}) we get that
\beas
\int_{\mathfrak a^\ast}|\widehat{P_0^{-\alpha}}(\lambda)|^2~(|\lambda|^2+|\rho|^2)^\sigma~|{\bf c}(\lambda)|^{-2}~d\lambda\leq C+ C'\int_{\{\mathfrak a^\ast: |\lambda|\geq 1\}} (|\lambda|^2+|\rho|^2)^{-2\alpha+\sigma}~(1+|\lambda|)^{n-l} ~d\lambda,
\eeas
which is finite. We recall from (\ref{reptn1}) that for $f, g \in H^\sigma(X)$
\begin{equation} \label{reptn3}
\left \langle (- \Delta)^\sigma f, g\right\rangle = \frac{1}{2|\Gamma(-\sigma)|}\int_X \int_X \left(f(z)-f(x)\right)~\overline{\left(g(z)-g(x)\right)}~P_0^\sigma(z^{-1}x)~dz ~dx.
\end{equation}
If we put $g(x)=P_0^{-\alpha}(x)$ and $f(x)=|F(x)|^2 (P_0^{-\alpha}(x))^{-1}$ in the equation above, then the left-hand side reduces to
\beas
\left\langle (-\Delta)^\sigma f, g \right\rangle &=& \int_{\mathfrak{a}^\ast} \left(|\lambda|^2 + |\rho|^2\right)^\sigma \widehat{f}(\lambda) \,\widehat{g}(\lambda) \,|{\bf c}(\lambda)|^{-2}\,d\lambda \\
&=& \Gamma(\alpha) \int_{\mathfrak{a}^\ast} \left(|\lambda|^2 + |\rho|^2\right)^{\sigma-\alpha} \widehat{f}(\lambda) \, |{\bf c}(\lambda)|^{-2}\,d\lambda \\ 
&=&\frac{\Gamma(\alpha)}{\Gamma(\alpha-\sigma)} \int_{\mathfrak{a}^\ast} \widehat{P_0^{\sigma-\alpha}}(\lambda) ~\widehat{f}(\lambda) \, |{\bf c}(\lambda)|^{-2}\,d\lambda \\ 
&=& \frac{\Gamma(\alpha)}{\Gamma(\alpha-\sigma)} \int_X |F(x)|^2 \frac{P_0^{\sigma-\alpha}(x)}{P_0^{-\alpha}(x)} \,dx.
\eeas
The right-hand side of the equation (\ref{reptn3}) becomes (see (\ref{reptn4}))
\begin{equation}
\frac{1}{2|\Gamma(-\sigma)|} \int_X\int_X \left(|F(x)-F(z)|^2- \left|\frac{F(x)}{g(x)}-\frac{F(z)}{g(z)}\right|^{2} ~ g(x)g(z)\right)~ P_0^\sigma(z^{-1}x)~dz ~dx.
\end{equation}
Hence, equating both sides of the equation (\ref{reptn3}) we have 
\beas
&& \frac{\Gamma(\alpha)}{\Gamma(\alpha-\sigma)}\int_X |F(x)|^2 \frac{P_0^{\sigma-\alpha}(x)}{P_0^{-\alpha}(x)} \,dx =\frac{1}{2|\Gamma(-\sigma)|} \int_X\int_X |F(x)-F(z)|^2~ P_0^\sigma(z^{-1}x)~dz ~dx \\
&& - \frac{1}{2|\Gamma(-\sigma)|} \int_X\int_X  \left|\frac{F(x)}{g(x)}-\frac{F(z)}{g(z)}\right|^{2} ~ g(x)g(z) ~ P_0^\sigma(z^{-1}x)~dz ~dx.
\eeas
By Lemma \ref{lem-int2} the first term in the right-hand side of the above equation is equals to $\langle (-\Delta)^\sigma F, F \rangle$ and hence the required identity follows.
\end{proof}

\begin{proof}[Proof of Theorem \ref{Hardy-homogeneous}]
Since $\sigma <1$ and $n \geq 2$, we can choose a positive $\alpha$ such that $2\sigma+n/4< \alpha< n/2$.
 From Theorem \ref{est-Pysigma-0} above it follows that
\beas
\frac{P_0^{\sigma-\alpha}(x)}{P_0^{-\alpha}(x)} &\asymp& |x|^{-2\sigma}, \:\:
\textit{ for } |x|< 1; \\ 
&\asymp & |x|^{-\sigma}, \:\: \textit{
for } |x|\geq 1.
\eeas
Therefore, it follows from Theorem \ref{ground-state-0} that
\begin{equation}\nonumber
\left\langle (-\Delta)^\sigma F, F \right\rangle \geq C_\sigma \left(\int_{|x|< 1} \frac{|F(x)|^2}{|x|^{2\sigma}} \,dx+ \int_{|x|\geq 1} \frac{|F(x)|^2}{|x|^{\sigma}}\,dx\right).
\end{equation} 

\end{proof}

\section{Mapping properties of Poisson Operator}
In this section we prove Theorem \ref{mapping-property}. We start with the following lemma.
\begin{lem}\label{lem-Py-sigma-lq}
For $0<\sigma<1$ and $1 < q< \frac{n+1}{n}$, the function $(x, y)\mapsto P_y^\sigma(x) \in L^q(X \times \R_+)$.
\end{lem}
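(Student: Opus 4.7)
The plan is to substitute the two-sided pointwise estimates of Theorem \ref{est-Pysigma} directly into the $L^q$ integral $\iint |P_y^\sigma(x)|^q\,dx\,dy$ and check convergence separately on the local piece $A=\{(x,y):|x|^2+y^2<1\}$ and the global piece $B=\{(x,y):|x|^2+y^2\geq 1\}$.

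On $A$, Theorem \ref{est-Pysigma} gives $|P_y^\sigma(x)|^q\asymp y^{2\sigma q}(|x|^2+y^2)^{-q(n/2+\sigma)}$, and near the origin the volume element on $X$ is Euclidean-like, so $dx\asymp |x|^{n-1}\,d|x|\,dk_M$. I would introduce polar coordinates $|x|=\rho\cos\phi$, $y=\rho\sin\phi$ with $\rho\in(0,1)$, $\phi\in(0,\pi/2)$, under which the integrand collapses to $\rho^{n(1-q)}\sin^{2\sigma q}\phi\,\cos^{n-1}\phi$. The angular integral is trivially finite, while the radial integral $\int_0^1\rho^{n(1-q)}\,d\rho$ converges precisely when $n(1-q)>-1$, that is, when $q<(n+1)/n$. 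This accounts for the upper constraint on $q$.

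On $B$, I would use the long-range estimate $|P_y^\sigma(x)|^q\lesssim y^{2\sigma q}\phi_0(x)^q e^{-q|\rho|\sqrt{|x|^2+y^2}}$ (the factor $(|x|^2+y^2)^{-q(l+1+2\sigma+2|\Sigma_0^+|)/4}$ is at most $1$ on $B$) and integrate in $y$ first. Splitting at $y=|x|$ and using $\sqrt{|x|^2+y^2}\geq\max(|x|,y)$ yields $\int_0^\infty y^{2\sigma q}e^{-q|\rho|\sqrt{|x|^2+y^2}}\,dy\lesssim (1+|x|)^{2\sigma q+1}e^{-q|\rho||x|}$. Then, passing to polar coordinates $x=k\exp H$ and invoking (\ref{estimate-phi0}) and (\ref{J-est}) gives $\phi_0(\exp H)^qJ(\exp H)\asymp \mathrm{poly}(H)\,e^{(2-q)\rho(H)}$, so the remaining integrand is bounded by a polynomial in $|H|$ times $e^{(2-q)\rho(H)-q|\rho||H|}$. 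Since $\rho(H)\leq|\rho||H|$ for $H\in\overline{\mathfrak a_+}$ and $q<(n+1)/n\leq 3/2<2$, this is further bounded by $\mathrm{poly}(|H|)\,e^{(2-2q)|\rho||H|}$, which decays exponentially for every $q>1$; the angular integration over $\overline{\mathfrak a_+}\cap\{|\omega|=1\}$ is harmless.

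The main obstacle is the $B$-integral: the volume element of $X$ grows like $e^{2\rho(H)}$ whereas $\phi_0^q$ decays only like $e^{-q\rho(H)}$, so $\phi_0\notin L^q(X)$ throughout the admissible range $1<q<(n+1)/n$. The argument survives only because the long-range asymptotic of $P_y^\sigma$ contributes the additional exponential $e^{-q|\rho|\sqrt{|x|^2+y^2}}$, which after the $y$-integration becomes the decisive $e^{-q|\rho||x|}$ that defeats the volume growth for every $q>1$. Combined, the constraints $q>1$ (from $B$) and $q<(n+1)/n$ (from $A$) give the claim.
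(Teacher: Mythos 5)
Your proposal is correct and follows essentially the same route as the paper: split the integral at $|x|^2+y^2=1$, insert the two-sided bounds of Theorem \ref{est-Pysigma} together with the Jacobian bound (\ref{J-est}) and the $\phi_0$ asymptotics (\ref{estimate-phi0}), and obtain the constraint $q<\frac{n+1}{n}$ from the local piece and $q>1$ from the exponential decay in the global piece. The only differences are cosmetic — you use two-dimensional polar coordinates in $(|x|,y)$ and integrate in $y$ first, where the paper uses the substitution $s=r/y$ with a Beta-function evaluation and splits the factor $e^{-q|\rho|\sqrt{|x|^2+y^2}}$ to separate the $y$- and $H$-integrations, both arguments resting on $\rho(H)\leq|\rho|\,|H|$ on $\overline{\mathfrak a_+}$.
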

\begin{proof} 
We first observe from (\ref{J-est}) that for $H\in \mathfrak a$ with $|H|<1$, the Jacobian $J(\exp H)$ corresponding to the polar decomposition is of order $|H|^{n-l}$. From Theorem \ref{est-Pysigma} it follows that
\beas
\int_{|x|^2+y^2< 1}|P_y^\sigma(x)|^q~dx~dy &\leq & C \int_{|x|^2+y^2< 1} y^{2\sigma q} (|x|^2+y^2)^{-nq/2-\sigma q} ~dx~dy\\
&\leq & C \int_{y=0}^1 \int_{\{H\in \overline{\mathfrak{a^+}}: |H|< 1\}}   y^{2\sigma q} (|H|^2+y^2)^{-nq/2-\sigma q} \, |H|^{n-l} ~dH~dy\\
&=& \int_{0}^1 \int_{0}^{1}y^{2\sigma q}(r^2+y^2)^{-nq/2-\sigma q}~r^{n-l}~r^{l-1}~dr~dy\\
&\leq& \int_0^1 \left(\int_0^{\infty} (1+s^2)^{-nq/2-\sigma q} s^{n-1} ~ds \right)~y^{n-nq}~dy.
\eeas
We  now use the following fact from \cite[3.251, (2); p.324]{GR}
\be \label{defn-beta}
\int_{0}^{\infty} x^{\mu-1}(1+x^2)^{\nu-1} ~dx= \frac{1}{2} B\left(\mu/2, (1-\nu-\mu/2)\right), \txt{ if } \Re \mu>0, \txt{ and } \Re(\nu+\mu/2)<1.
\ee
In our case, $\mu=n$ and $\nu= -nq/2-\sigma q+1$. Hence, $\nu+\mu/2<1$ if and only if $q> n/(n+2\sigma)$. Therefore, if $q> n/(n+2\sigma)$  the above integral reduces to 
\beas
\frac{1}{2} B\left(n/2, (nq/2 +\sigma q -n/2)\right)\int_{0}^1 y^{n-nq}~dy. 
\eeas
This is finite only if $q< (1+n)/n$. Hence, for $n/(n+2\sigma)<q<1+\frac 1n$,
\begin{equation}
\nonumber
\int_{|x|^2+y^2\leq 1}|P_y^\sigma(x)|^q~dx~dy<\infty.
\end{equation}
On the other hand for $q>1$, using the estimate of Jacobian in (\ref{J-est}) and the  asymptotic behaviour of $\phi_0$ given in (\ref{estimate-phi0}), it follows from  Theorem \ref{est-Pysigma} that
\beas &&\int_{|x|^2+y^2\geq 1}|P_y^\sigma(x)|^q~dx~dy \\
&&\leq \int_{|x^2|+y^2 \geq 1} \frac{y^{2\sigma q}}{\left(4^\sigma \Gamma(\sigma)\right)^q} \left(\sqrt{|x|^2+y^2}\right)^{-(l/2+\,|\Sigma_0^+|+\,\sigma+\,1/2)q}~e^{-|\rho|q\sqrt{|x|^2+y^2}}~|\phi_0(x)|^q~dx~dy\\
&&\leq C \int_{|x|^2+y^2 \geq 1} y^{2\sigma q} ~e^{-\frac{|\rho|(q+1)}{2}\sqrt{|x|^2+y^2}}~e^{-\frac{|\rho|(q-1)}{2}\sqrt{|x|^2+y^2}}~|\phi_0(x)|^q~dx~dy\\
&&\leq C \int_{\left\{(H, y)\in \overline{\mathfrak{a}^+}\times (0, \infty) : |H|^2+y^2 \geq 1\right\}} y^{2\sigma q}~e^{-\frac{|\rho|(q-1)|y|}{2}}~e^{-\frac{|\rho|(q+1) |H|}{2}} ~|H|^{|\Sigma_0^+|q}\, e^{-q\,\rho(H)}~e^{2\rho(H)}~dH~dy\\
&& \leq \left(\int_{0}^\infty y^{2\sigma q}~e^{-|\rho|(q-1)|y|/2}~dy\right) \left(\int_{\overline{\mathfrak{a^+}}} |H|^{|\Sigma_0^+| q}~e^{-\frac{3}{2}(q-1)\rho(H)}~dH\right)< \infty.
\eeas

This completes the proof.

\end{proof}

We are now in a position to prove Theorem \ref{mapping-property}.  We follow similar ideas which are used to the proof of \cite[Theorem B]{MOZ}.
\begin{proof}[Proof of Theorem \ref{mapping-property}] We first prove $(1)$. 
Let $u$ be the solution of (\ref{extn1}) with boundary value $f \in H^{\sigma}(X)$, and let 
\bes \mathcal{U}(\la, k, \eta) =\mathcal{F}\left( \widetilde u (\la, k)\right)(\eta),\: \textit{ for } \la\in \mathfrak{a}^\ast, k\in K, \eta\in {\R}_+ 
\ees 
be the composition of the Helgason and the Euclidean Fourier transform on $X\times \R$. Multiplying $y^2$ on both sides of the equation (\ref{extn1}) and taking the composition of Helgason and Euclidean Fourier transform on $X \times \R$ it follows that
\bes 
\frac{\partial^2}{\partial \eta^2} \left((|\lambda|^2+|\rho|^2+\eta^2) ~\mathcal{U}(\lambda, k, \eta)\right) - (1-2\sigma) \frac{\partial}{\partial \eta}\left(\eta~ \mathcal{U}(\lambda, k, \eta) \right)=0
\ees
which is equivalent to
\be \label{eqn-u-tilde} 
\left\{ (|\la|^2+|\rho|^2+\eta^2)\frac{\partial^2}{\partial^2 \eta} +(3+2\sigma)\eta\frac{\partial}{\partial \eta}+(1+2\sigma)\right\} \mathcal{U}(\la, k, \eta)=0.
\ee
Let $t= \frac{\eta}{\sqrt{|\la|^2+|\rho|^2}}$ and we define
\bes
v(\la, k, t)= \mathcal{U}(\la, k, \eta).
\ees
Then equation (\ref{eqn-u-tilde}) reduces to
\bes 
\mathcal D_{\sigma, t} v(\la, k, t):=   
\left\{(1+t^2)\frac{d^2}{d t^2} +(2\sigma + 3)t\frac{d}{dt}+ (2\sigma +1)\right\} v(\lambda, k, t)=0.
\ees
Since $f(x)= u(x, 0)$ for $x\in X$, by the Euclidean Fourier inversion formula we have
\bes
\widetilde{f}(\lambda, k)=u(\cdot, 0)^{\widetilde{}}(\la, k)= \frac{1}{\sqrt{2\pi}} \int_{\R} \mathcal{U}(\lambda, k, \eta)~d\eta= \frac{\sqrt{|\la|^2+|\rho|^2}}{\sqrt{2\pi}}\int_\R v(\la, k, t)~dt.
\ees
Therefore, the function $v$ satisfies 
\bes 
\mathcal D_{\sigma, t} v(\lambda, k, t)=0, \text{ and } \int_\R v(\lambda, k, t)\,dt=\frac{\sqrt{2\pi}}{\sqrt{|\lambda|^2 + |\rho|^2}} \widetilde{f}(\lambda, k),
\ees
for almost every $(\lambda, k)\in \mathfrak a^\ast \times K$. Hence, the function $v$ is given by 
\be \label{eqn-v-f}
v(\la, k, t)= \frac{\sqrt{2\pi}}{\sqrt{|\la|^2+|\rho|^2}} \widetilde f(\la, k)~\psi(t), \ee 
where $\psi$ satisfies 
\be \label{psi-pr}
\mathcal D_{\sigma, t}\psi=0, \:\: \txt{and}\:\: \int_{\R} \psi(t)~dt=1.
\ee
The equation $D_{\sigma,t} \psi=0$ has a fundamental system of solutions spanned by 
\beas
&& \psi_1(t)= \, _2F_1\left(\frac{1}{2}, \sigma +\frac{1}{2}; \frac{1}{2}; -t^2\right)=(1+t^2)^{-\sigma-1/2},\\
&& \psi_2(t)= t \, _2F_1\left(1, \sigma +1; \frac{3}{2}; -t^2\right).
\eeas
Using (\ref{eqn-v-f}) it is now easy to check that
\bea \label{eqn-iso}
&&\int_{\mathfrak a^\ast \times K \times \R}|\mathcal{U}(\la, k, \eta)|^2 (|\la|^2 + |\rho|^2 + \eta^2)^{\sigma+ \frac 12}~|{\bf c}(\la)|^{-2}~d\la~dk~d\eta \nonumber \\
&=& \int_{\mathfrak a^\ast \times K\times \R} |v(\lambda, k, t)|^2 ~(|\la|^2 + |\rho|^2)^{\sigma+1}~\left(1+t^2\right)^{\sigma+ \frac 12}~|{\bf c}(\la)|^{-2}~d\la~dk~dt \nonumber \\
&=&2\pi\int_{\mathfrak{a}^\ast \times K}|\widehat f(\la, k)|^2~(|\la|^2+|\rho^2|)^{\sigma}~|{\bf c}(\la)|^{-2}~d\la~dk \int_{\R} |\psi(t)|^2~(1+t^2)^{\sigma+ \frac 12}~dt.
\eea
Since $f\in H^\sigma$, it follows that $u\in H^{\sigma+ \frac 12}$ if and only if $\psi \in L^2(\R, (1+t^2)^{\sigma+ \frac 12}\,dt)$. It is easy to check from the asymptotic properties of hypergeometric function that $\psi_2\notin L^2(\R, (1+t^2)^{\sigma+ \frac 12}\,dt)$ (see \cite[Theorem 2.3.2]{Andrews}).  Hence, we choose $\psi(t)$ to be a constant multiple of $\psi_1(t)=(1+t^2)^{-\sigma- \frac 12}$. From (\ref{defn-beta}) we get that $\|\psi_1\|_{L^1(\R)}=\sqrt{\pi}\, \Gamma(\sigma)/\Gamma(\sigma+ \frac 12)$. Hence, using (\ref{psi-pr}) it follows that
\bes
\psi(t)= \frac{\Gamma(\sigma+1/2)}{\sqrt \pi \Gamma(\sigma)}\psi_1(t).
\ees
We now observe that 
\bes
\int_{\R} |\psi(t)|^2~(1+t^2)^{\sigma+ \frac 12}~dz= \frac{\Gamma(\sigma+1/2)}{\sqrt \pi \Gamma(\sigma)},
\ees
and hence from (\ref{eqn-iso}),
\beas
\|u\|_{H^{\sigma+ \frac 12}(X\times \R_+)}^2= \frac{2\sqrt{\pi} \Gamma(\sigma+ \frac 12)}{\Gamma(\sigma)} \|f\|_{H^{\sigma}(X)}.
\eeas
This completes the proof of part (1). We now prove part (2). We first observe that 
\bes
\|T_\sigma f\|_{L^q(X \times \R_+)}^q= \int_{0}^\infty \|f \ast P_y^\sigma\|_{L^q(X)}^q~dy.
\ees
Also, from Theorem \ref{est-Pysigma} it follows that for each $y>0$ the function $P_y^\sigma \in L^q(X)$, for all $q>1$. Therefore, by Kunze-Stein phenomenon (Remark \ref{rem-k-s}), for $1\leq p < q\leq 2$
\bes
\|f\ast P_y^\sigma\|_{L^q(X)}\leq C\|f\|_{L^p(X)}\|P_y^\sigma\|_{L^q(X)}.
\ees
 Therefore, by Lemma \ref{lem-Py-sigma-lq}  it follows that
\be \label{interpolation-1}
T_\sigma: L^p(X) \rightarrow L^q(X\times \R_+),
\ee
is a bounded map, for $1\leq p< q< (n+1)/n$. 
We also observe that 
\be\label{interpolation-2}
T_\sigma: L^\infty(X)\ra L^\infty(X\times \R_+),
\ee
is a bounded map, as
the integral  $\int_X P_y^\sigma(x)\,dx=1$ for all $y>0$. 
By Riesz Thorin interpolation theorem it now follows from (\ref{interpolation-1}) and (\ref{interpolation-2}) that
\be \label{interpolation-3}
T_\sigma: L^p(X) \rightarrow L^q(X\times \R_+),
\ee
is bounded for $1\leq p < \infty$ and $p<q<(\frac {n+1}{n})p$.
We now prove that
\bes
\|T_\sigma f\|_{L^{q}(X\times \R_+)}\leq C \|f\|_{L^p(X)},
\ees
for $p> 1$ and $q=(\frac{n+1}{n})p$. By (\ref{interpolation-2}) and Marcinkiewicz interpolation theorem
it is enough to show that 
\bes
T_\sigma: L^1(X) \ra L^{(1+n)/n, \infty}(X\times \R_+).
\ees 
Using Theorem \ref{est-Pysigma} and the boundedness of the function $\phi_0$ we get that
\beas
&& |T_\sigma f(x, y)|  \leq \int_{X} |f(z)| P_y^\sigma (z^{-1}x)~|f(z)|~dz \leq Cy^{-n} ~\|f\|_{L^1(X)}\\
&& +  Cy^{2\sigma}\int_{|z^{-1}x|^2+y^2 \geq 1} \sqrt{(|z^{-1}x|^2+y^2)}^{(-l/2-1/2-\sigma-|\Sigma_0^{+}|)}~ e^{-|\rho| \sqrt{(|z^{-1}x|^2+y^2)}|} |f(z)|~dz\\
&&\leq Cy^{-n} ~\|f\|_{L^1(X)} + C y^{-n}~ \|f\|_{L^1(X)}~ \sup{y\in \R_+} \left(y^{2\sigma +n} e^{-|\rho| y}\right)\\
&&\leq  C y^{-n}~\|f\|_{L^1(X)}.
\eeas
Hence, $|T_a f(x, y)|> \la$ implies that $y\leq \left(\frac{C \|f\|_{L^1(X)}}{\la}\right)^{\frac{1}{n}}=b$ (say).
Then Chebyshev's inequality yields
\beas
&& m\left(\{(x, y)\in X\times \R_+: |T_af(x, y)| > \la\}\right) \\
&=& m\left(\{(x, y)\in X\times \R_+: y< b, |T_af(x, y)| > \la\}\right)\\
&\leq& \frac{1}{\la}\int_{\{(x, y)\in X\times \R_+: y< b\}} |T_af(x, y)|~dx~dy\\
&\leq& \frac{C_a}{\la} \int_{X} |f(z)|\int_{\{(x, y)\in X\times \R_+: y< b\}} P_y^\sigma( z^{-1}x)~dx~dy~dz\\
&\leq& \frac{C_\sigma}{\la} \|f\|_{L^1(X)} \,b= C_\sigma \left(\frac{\|f\|_{L^1(X)}}{\la}\right)^{1+\frac{1}{n}}.
\eeas
The last inequality follows because of the fact that  $\int_X P_y^\sigma(x)\,dx=1$ for all $y>0$.
This completes the proof.
\end{proof}

\section{Expression of the kernel $P_y^\sigma$}
In the case of $\R^n$ and of the Heisenberg groups the function $P_y^\sigma$ is the classical Poisson kernel. In the case of symmetric spaces, we only have the integral expression as in Theorem \ref{soln-extn-possion} and the both-sides estimates (Theorem \ref{est-Pysigma}) for $P_y^\sigma$. In this section we write the precise expression of $P_y^\sigma$ for complex and rank one symmetric spaces using the expression of the heat kernel.

\subsection{$G$ is complex} In this case we have the following formula for the heat kernel \cite{AO}
\bes
h_t(\exp H)=(4\pi t)^{-n/2}~e^{-|\rho|^2 t}~\left(\prod_{\alpha\in \Sigma^+}\frac{\alpha(H)}{\sinh \alpha(H)}\right)~e^{-H^2/4t}, \:\: t>0, ~H\in \mathfrak a.
\ees
It now follows from the definition (\ref{defn-P}) of $P_y^\sigma$  
\beas
&&P_y^\sigma(\exp H)=\frac{y^{2\sigma}}{4^\sigma \Gamma(\sigma)} (4\pi t)^{-n/2}~\left(\prod_{\alpha\in \Sigma^+}\frac{\alpha(H)}{\sinh \alpha(H)}\right)~\int_{0}^{\infty} t^{-n/2-\sigma-1}~e^{-|\rho|^2 t}~e^{-\left(|H|^2+y^2\right)/4t}~\frac{dt}{t^{1+\sigma}}\\
&=& \frac{y^{2\sigma}}{\Gamma(\sigma)}~2^{1-n/2-\sigma}~\pi^{-n/2}~\left(\prod_{\alpha\in \Sigma^+}\frac{\alpha(H)}{\sinh \alpha(H)}\right)~\left(\frac{\sqrt{|H|^2+y^2}}{|\rho|}\right)^{-(n+2\sigma)/2}~K_{-n/2-\sigma}(\sqrt{|H|^2+y^2}|\rho|).
\eeas
Here the last equality follows from the formula \cite[3.471(9), p. 368]{GR}, and  $K_{-n/2-\sigma}$ is the modified Bessel function (defined in \cite[8.407 (1), p. 911]{GR}).
\subsection{$X$ is of rank one}
Let $F = \R, \C, H$, or $O$ be the real numbers, the complex numbers, the quaternions or the
Cayley octonions respectively. The rank one symmetric spaces can be realized as the hyperbolic space $\mathbb H^n(F)$. Here the subscript $n$ denotes the dimension over the base field $F$. Using the expression of the heat kernel \cite{AO, GM} we have the following results. 
\begin{enumerate}
\item  $X=\mathbb H^n(\R)$, and $n\geq 3$ odd. Using the formula \cite[3.471(9), p. 368]{GR} we get
\beas
P_y^\sigma(x) &=& c\int_{0}^\infty t^{-1/2}~e^{-\rho^2 t}~e^{-y^2/4t} \left(-\frac{1}{\sinh x}~\frac{\partial}{\partial x}\right)^{(n-1)/2}~e^{-|x|^2/4t}~\frac{dt}{t^{1+\sigma}}\\
&=& c\left(-\frac{1}{\sinh x}~\frac{\partial}{\partial x}\right)^{(n-1)/2} \int_{0}^\infty t^{-3/2-\sigma}~e^{-\rho^2t }~e^{-(|x|^2+y^2)/4t}~dt\\
&=& c  \left(-\frac{1}{\sinh x}~\frac{\partial}{\partial x}\right)^{(n-1)/2} \left(\frac{\sqrt{|x|^2+y^2}}{\rho}\right)^{-\sigma-1/2}~K_{-\sigma-1/2}(\rho\sqrt{|x|^2+y^2}).
\eeas

\item $X=\mathbb H^n(\R)$, and $n\geq 2$ even. Using the formula \cite[3.471(9), p. 368]{GR} we get
\beas 
&&P_y^\sigma(x) \\
&=&c \int_{0}^\infty t^{-1/2}~e^{-\rho^2 t}~e^{-y^2/4t}~ \int_{x}^\infty \frac{\sinh z}{\sqrt{\cosh^2 z- \cosh^2 x}}\left(-\frac{1}{\sinh z}~\frac{\partial}{\partial z}\right)^{n/2}~e^{-|z|^2/4t}~dz~\frac{dt}{t^{1+\sigma}}\\
&=&c \int_{x}^\infty \frac{\sinh z}{\sqrt{\cosh^2 z- \cosh^2 x}}\left(-\frac{1}{ \sinh z}~\frac{\partial}{\partial z}\right)^{n/2}~\int_{0}^\infty t^{-3/2-\sigma}~e^{-\rho^2 t}~e^{-\left(|z|^2+y^2\right)/4t}~dt~dz\\ 
&=&c\int_{x}^\infty \frac{\sinh z}{\sqrt{\cosh^2 z- \cosh^2 x}}\left(-\frac{1}{\sinh z}~\frac{\partial}{\partial z}\right)^{n/2}\left(\frac{\sqrt{|z|^2+y^2}}{\rho}\right)^{-\sigma-1/2}~K_{-\sigma-1/2}(\rho\sqrt{|z|^2+y^2})~dz.
\eeas

\item $X=\mathbb H^n(F)$ where $F = \C, H$ or $O$. Then there exist constants $c_1, c_2, \cdots, c_{n/2}$ such that
\beas
P_y^\sigma(x) &=& \int_{0}^\infty t^{-1/2}~e^{-\rho^2 t}\sum_{j=1}^{n/2} c_j \int_{x}^\infty \frac{\sinh z}{\sqrt{\cosh^2 z- \sinh^2 x}}~(\cosh z)^{j+1-d}~\left(-\frac{1}{2\pi \sinh z}~\frac{\partial}{\partial z}\right)^{j+m_\alpha/2}\\
&& e^{-|z|^2/4t}~dz~\frac{dt}{t^{1+\sigma}}\\
&=& c_\sigma \sum_{j=1}^{d/2} c_j \int_{x}^\infty \frac{\sinh z}{\sqrt{\cosh^2 z- \sinh^2 x}}~(\cosh z)^{j+1-d}~\rho^{1+2\sigma}~\left(-\frac{1}{2\pi \sinh z}~\frac{\partial}{\partial z}\right)^{j+m_\alpha/2}\\
&& 2 \left(\frac{2}{\sqrt{|z|^2+y^2} \rho}\right)^{\sigma+1/2} ~K_{-\sigma-1/2}(\rho\sqrt{|z|^2+y^2})~dz,
\eeas
where the constant $c_\sigma$ depends only on $\sigma$.
\end{enumerate}

\section{Poincar\'{e}-Sobolev inequality}
In this section we prove Theorem \ref{thm-p-s-X}. For the convenience of the reader we restate the theorem here. 
\begin{thm}
Let $\dim X=n\geq 3$ and $0<\sigma<\min\{l+ 2|\Sigma_0^+|, n\}$. Then for $2< p\leq \frac{2n}{n-\sigma}$ there exists $S=S_{n, \sigma, p}>0$ such that for all $f\in H^{\frac{\sigma}{2}}(X)$ 
\be\label{est-poin}
\|(-\Delta-|\rho|^2)^{\sigma/4}f\|_{L^2(X)}^2 \geq S\|f\|_{L^p(X)}^2.
\ee
\end{thm}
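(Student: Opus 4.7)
The plan is to reduce (\ref{est-poin}) to a convolution boundedness statement and then combine Anker--Ji's pointwise kernel estimates with the Hardy--Littlewood--Sobolev inequality (for the local part) and the Kunze--Stein phenomenon (for the global part). By the spectral theorem and duality, (\ref{est-poin}) is equivalent to $(-\Delta-|\rho|^2)^{-\sigma/4}\colon L^2(X)\to L^p(X)$ being bounded, and hence (using self-adjointness of this operator on its natural domain) to the $L^{p'}(X)\to L^p(X)$ boundedness of convolution with
\bes
\mathcal{K}_\sigma(x)=\frac{1}{\Gamma(\sigma/2)}\int_0^\infty t^{\sigma/2-1}\,e^{t|\rho|^2}\,h_t(x)\,dt.
\ees
This identification comes from the subordination formula $\mu^{-\sigma/2}=\Gamma(\sigma/2)^{-1}\int_0^\infty t^{\sigma/2-1}e^{-\mu t}\,dt$ applied to the spectrum of $-\Delta-|\rho|^2$ combined with (\ref{ht-ft}). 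The hypothesis $\sigma<l+2|\Sigma_0^+|$ ensures convergence of the defining integral at $t=+\infty$ via Theorem \ref{est-heatkernel-bothside}.

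Mimicking the proof of Theorem \ref{est-Pysigma-0} --- the small-time local expansion (\ref{est-h-local}) on $(0,1]$ together with the two-sided bounds of Theorem \ref{est-heatkernel-bothside} and Laplace's method on $[1,\infty)$ --- we obtain the Anker--Ji pointwise estimates $\mathcal{K}_\sigma(x)\leq C|x|^{\sigma-n}$ for $|x|\leq 1$ and $\mathcal{K}_\sigma(x)\leq C\,\phi_0(x)\,(1+|x|)^{a(\sigma)}$ for $|x|>1$, with $a(\sigma)<0$ under our hypothesis. Split $\mathcal{K}_\sigma=\mathcal{K}_\sigma^{\mathrm{loc}}+\mathcal{K}_\sigma^\infty$ via the cut-off at $|x|=1$. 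For $\mathcal{K}_\sigma^{\mathrm{loc}}$, the bound $|x|^{\sigma-n}\chi_{\{|x|\leq 1\}}$ together with the fact that the Riemannian measure on $X$ is comparable to Lebesgue measure near the origin allows us to dominate $g\mapsto g\ast\mathcal{K}_\sigma^{\mathrm{loc}}$ by the Euclidean Riesz potential of order $\sigma$; the classical Hardy--Littlewood--Sobolev inequality then yields $L^{p'}(X)\to L^p(X)$ boundedness at the critical exponent $p=\frac{2n}{n-\sigma}$, and interpolation with the Young bound $L^{p'}\to L^{p'}$ (valid because $\sigma>0$ implies $\mathcal{K}_\sigma^{\mathrm{loc}}\in L^1(X)$) covers the whole sub-range $2<p\leq\frac{2n}{n-\sigma}$.

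For the global piece $\mathcal{K}_\sigma^\infty$, combining the polar decomposition (\ref{integral-X}), the Jacobian estimate (\ref{J-est}) and the asymptotic (\ref{estimate-phi0}) of $\phi_0$ shows that $\mathcal{K}_\sigma^\infty\in L^r(X)$ for every $r>2$. The Cowling--Giulini--Meda form of the Kunze--Stein phenomenon \cite{CGM} recalled in Remark \ref{rem-k-s}, together with the $K$-biinvariance of $\mathcal{K}_\sigma^\infty$ (so that convolution with it is self-adjoint) and duality, then delivers the required $L^{p'}(X)\to L^p(X)$ boundedness of $g\mapsto g\ast\mathcal{K}_\sigma^\infty$ for every $p$ in the stated range. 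Combining the local and global contributions establishes (\ref{est-poin}). The main technical obstacle is this last step: because $\mathcal{K}_\sigma^\infty\notin L^q(X)$ for any $q\leq 2$, the Kunze--Stein estimate quoted in Remark \ref{rem-k-s} cannot be applied to $\mathcal{K}_\sigma^\infty$ in its target-space form directly; one must instead apply it to the dual operator with some auxiliary exponent $r>2$ and then interpolate carefully with the local bound to cover the whole sub-range $p\in(2,\frac{2n}{n-\sigma}]$. In keeping with the non-sharp nature of the theorem, no attempt is made to track the optimal constant.
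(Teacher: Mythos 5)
Your reduction of \eqref{est-poin} to the $L^{p'}(X)\to L^p(X)$ boundedness of convolution with the Riesz-type kernel, the Anker--Ji pointwise bounds, the cut-off at $|x|=1$, and the treatment of the local piece (weak-type Young / Hardy--Littlewood--Sobolev at the critical exponent $p=\frac{2n}{n-\sigma}$, Young's inequality below it) all agree with the paper. The gap is in the global piece $\mathcal{K}_\sigma^\infty$, and it is a genuine one. The operator $f\mapsto f\ast \mathcal{K}_\sigma^\infty$ is \emph{not} bounded on $L^2(X)$: its spherical transform at $\lambda=0$ is $\int_{|x|\geq 1}\mathcal{K}_\sigma(x)\,\phi_0(x)\,dx\asymp\int_1^\infty r^{\sigma-1}\,dr=\infty$, since $\phi_0(\exp H)^2J(\exp H)$ has only polynomial growth. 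The target point $(1/p',1/p)$ lies on the duality line $1/a+1/b=1$, and the only fixed-kernel endpoint available on that line is $L^1\to L^\infty$; every bound obtainable from the Cowling--Giulini--Meda estimate of Remark \ref{rem-k-s}, whether applied directly or to the adjoint, requires the kernel to lie in $L^q$ with $q\leq 2$ (which you correctly observe fails here) and in its dualized form only yields maps $L^{s'}\to L^{t}$ with $t>s'\geq 2$, i.e.\ points strictly below the duality line. A convex combination of $(1,0)$ with points strictly below that line never returns to it, so no Riesz--Thorin interpolation of fixed-kernel bounds can produce $L^{p'}\to L^p$. Your proposed repair --- apply the estimate to the dual operator and ``interpolate carefully with the local bound'' --- does not close this: the convolution operator with a symmetric $K$-biinvariant kernel is self-adjoint, so duality returns the identical mapping problem, and bounds for $\mathcal{K}_\sigma^0$ cannot be interpolated with bounds for $\mathcal{K}_\sigma^\infty$ to control $\mathcal{K}_\sigma^\infty$ alone.

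The device you are missing, and which the paper uses (following Ray--Sarkar), is Stein's complex interpolation for the \emph{analytic family} $T_zf=f\ast(\mathcal{K}_\sigma^\infty)^{1+z}$, $\Re z\geq-\tfrac12$: at $\Re z=-\tfrac12$ the kernel $(\mathcal{K}_\sigma^\infty)^{1/2}\lesssim\phi_0^{1/2}\,|x|^{(\sigma-l)/2-|\Sigma_0^+|}$ is bounded, giving $L^1\to L^\infty$; at $\Re z=\epsilon>0$ the spherical transform of $(\mathcal{K}_\sigma^\infty)^{1+\epsilon}$ is uniformly bounded because $\phi_0^{2+\epsilon}$ is integrable against the Haar measure, giving $L^2\to L^2$; evaluating at $z=0$ with $\epsilon=\epsilon(p)$ yields $L^{p'}\to L^p$ for every $p>2$. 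It is precisely the deformation of the kernel that allows one to land on the duality line away from $(1,0)$. (A bilinear Kunze--Stein estimate of the form $L^{p'}\ast L^{p'}\subset L^{b}$ with $b<2$, fed into the trilinear form $\int_X\mathcal{K}_\sigma^\infty\,(\tilde f\ast g)$, could serve as a substitute, but that is a different and stronger statement than the one recalled in Remark \ref{rem-k-s}.)
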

\begin{proof}
We first observe that it is enough to prove the result for $f\in C_c^\infty(X)$. It also suffices to show that 
\be \label{ets1}
\int_X f(x) ~(-\Delta-|\rho|^2)^{-\sigma/2}f(x)~dx \leq C \|f\|_{L^{p'}(X)}^2.
\ee
Indeed, if (\ref{ets1}) holds, then by H\"older's inequality  
\beas
\left|\left\langle f, g \right\rangle \right| &=& \left|\left\langle \left(-\Delta-|\rho|^2\right)^{\sigma/4}f, \left(-\Delta-|\rho|^2\right)^{-\sigma/4}g\right\rangle\right|\\  \\
&\leq &\left \|\left(-\Delta-|\rho|^2\right)^{\sigma/4}f\right\|_{L^2(X)} \,\,\left\|\left(-\Delta-|\rho|^2\right)^{-\sigma/4}g\right\|_{L^2(X)} \\ \\
&=& \left\langle \left(-\Delta-|\rho|^2\right)^{\sigma/2}f, f\right\rangle^{1/2} \left\langle \left(-\Delta-|\rho|^2\right)^{-\sigma/2}g, g\right\rangle^{1/2} \\ \\
&\leq & C^{\frac{1}{2}} \left\langle \left(-\Delta-|\rho|^2\right)^{\sigma/2}f, f\right\rangle^{\frac{1}{2}} \|g\|_{L^{p'}(X)},
\eeas	
and hence
\bes
\|f\|_{L^p(X)} \leq C_n^{\frac{1}{2}} \left\langle \left(-\Delta-|\rho|^2\right)^{\sigma/2}f, f\right\rangle^{\frac{1}{2}}.
\ees
We now prove (\ref{ets1}).
Let $k_\sigma$ be the Schwartz kernel for the operator $(-\Delta- |\rho|^2)^{-\sigma/2}$. We have the following well-known estimates due to Anker and Ji \cite[Theorem 4.2.2]{AJ}, for $0< \sigma< l+2|\Sigma_0^+|$ 
\bea \label{K-infty-est}
k_\sigma(x)&&\asymp |x|^{\sigma-l-2|\Sigma_0^+|}~\phi_0(x),\:\: |x|\geq 1,\\
&& \asymp |x|^{\sigma-n},\:\: |x|< 1 \nonumber.
\eea
To prove (\ref{ets1}), it is enough to show that
\bes
\|f \ast k_\sigma\|_{L^p(X)}\leq C \|f\|_{L^{p'}(X)}.
\ees	
Let $\chi$ be the characteristic function of the unit ball ${\bf B}(o, 1)$ and $k_\sigma^0(x)= \chi(x) ~ k_\sigma(x)$ and $k_\sigma^\infty=k-k_0$. Now, by Young's inequality we have that  
\bes
\|f\ast k_\sigma^0\|_{L^p(X)}\leq C\|f\|_{L^{p'}(X)}~ \|k_\sigma^0\|_{L^{p/2}(X)},
\ees
and
\beas
\|k_\sigma^0\|_{L^{p/2}(X)}^{\frac{p}{2}}\asymp \int_{0}^{1}|t|^{(\sigma-n)p/2}~|t|^{|\Sigma^+|}~t^{l-1}~dt.
\eeas
The right-hand side is finite if
$p<\frac{2n}{n-\sigma}$. Using the fact that for $r<1$, the volume of the ball $B(o, r)$ in $X$ is of order $r^{n}$,  it is easy to check that $k_\sigma^0\in L^{\frac{n}{n-\sigma}, \infty}(X)$. By Young's inequality for weak type spaces \cite[Theorem 1.4.24. page 63]{G} it follows that
\bes
\|f \ast k_\sigma^0\|_{L^{\frac{2n}{n-\sigma}}(X)} \leq C \|f\|_{L^{\frac{2n}{n+\sigma}}(X)}.
\ees	
Therefore, we have for all $p\leq \frac{2n}{n-\sigma}$,
\begin{equation}\label{around-origin}
\|f \ast k_\sigma^0\|_{L^p(X)} \leq C \|f\|_{L^{p'}(X)}.
\end{equation}
Next, we shall show that for $p>2$,
\begin{equation}
\nonumber
\|f \ast k_\sigma^\infty\|_{L^p(X)} \leq C_p \|f\|_{L^{p'}(X)}.
\end{equation}
To prove this we shall use complex interpolation theorem  and the idea of \cite[Theorem 4.1]{RS}. For $\Re z\geq  -\frac 12$, we define an analytic family of linear operators $T_z$ from $(X, dx)$ to itself as follows:
\begin{equation}
\nonumber
T_z f= f\ast (k_\sigma^\infty)^{1+z}.
\end{equation}
For $z=-\frac{1}{2} + iy$, we have
\beas
\|T_z f\|_{L^\infty(X)} &=&\|f\ast (k_\sigma^\infty)^{\frac{1}{2} +iy}\|_{L^\infty(X)} \\
&\leq & C \, \sup_{\{x\in X:|x|\geq 1\}}\varphi_0(x)^{\frac{1}{2}} |x|^{(\sigma-l)/2-|\Sigma_0^{+}| } \, \|f\|_{L^1(X)}\\
&\leq& C \|f\|_{L^1(X)}.
\eeas
For $z=\epsilon + iy, \epsilon>0$, we have
\beas
\|T_z f\|_{L^2(X)}^2 &=& \int_\R \int_K \left|\widetilde{f}(\lambda, k)\right|^2 \left|\widehat{ (k_\sigma^\infty)^{1+\epsilon +iy}}(\lambda)\right|^2 |{\bf c}(\lambda)|^{-2} \,d\lambda~dk\\
&\leq& \sup \left|\widehat{ (k_\sigma^\infty)^{1+\epsilon +iy}}(\lambda)\right|^2 \|f\|_{L^2(X)}^2.
\eeas
Now, by Theorem \ref{thm-phi} it follows that for $\lambda\in \mathfrak a^\ast$ and
$\epsilon > 0$  
\beas
|\widehat{ (k_\sigma^\infty)^{1+\epsilon +iy}}(\lambda)|&\leq& \left|\int_{\{x\in X: |x|\geq 1\}}  |x|^{(\sigma-l-2|\Sigma_0^+|)(1+\epsilon+iy)}~(\phi_0(x))^{1+\epsilon+iy}~\phi_{-\lambda}(x)~dx\right|\\
&\leq& \int_{\{x\in X: |x|\geq 1\}} \phi_{0}(x)^{2+\epsilon}~ dx< \infty,
\eeas
and hence $\|T_zf\|_{L^\infty(X)}\leq \|f\|_{L^2(X)}$.
Hence, by analytic interpolation for $p>2$,
\be \label{around-infty}
\|f \ast k_\sigma^\infty\|_{L^p(X)}= \|T_0 f\|_{L^p(X)} \leq C \|f\|_{L^{p'}(X)}.
\ee
Therefore, from (\ref{around-origin}) and from (\ref{around-infty}), it follows that for all $2<p\leq \frac{2n}{n-\sigma}$,
\begin{equation}
\nonumber
\|f \ast k_\sigma\|_{L^p(X)} \leq C \|f\|_{L^{p'}(X)}.
\end{equation}
This completes the proof.

\end{proof}	

As a corollary of the theorem above we have the following
\begin{cor}
Let $2< p\leq \frac{2n}{n-2}$ and $\dim X=n\geq 3$. Then there exists $S_{n,p}>0$ such that for all $u\in H^1(X)$, 
\bes \|\nabla u\|_{L^2(X)}^2-|\rho|^2\|u\|_{L^2(X)}^2\geq S_{n,p} \|u\|_{L^p(X)}^2.
\ees
\end{cor}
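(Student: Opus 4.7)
The plan is to deduce the corollary by applying Theorem \ref{thm-p-s-X} at the single value $\sigma=2$ and then rewriting the left-hand side in terms of the gradient. First I would verify that $\sigma=2$ is admissible in Theorem \ref{thm-p-s-X}. The condition $\sigma<n$ is immediate from the standing assumption $n\geq 3$. For the condition $\sigma<l+2|\Sigma_0^+|$, I would use the fact that the $l$ simple roots of $\Sigma^+$ are linearly independent indivisible positive roots, so $|\Sigma_0^+|\geq l\geq 1$, whence $l+2|\Sigma_0^+|\geq 3l\geq 3>2$. At $\sigma=2$ the upper exponent $\frac{2n}{n-\sigma}$ from Theorem \ref{thm-p-s-X} becomes exactly $\frac{2n}{n-2}$, matching the hypothesis of the corollary.

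The second step is to identify the norm on the left-hand side of the conclusion of Theorem \ref{thm-p-s-X} (at $\sigma=2$) with the quantity appearing in the corollary. Since the $L^2$-spectrum of $-\Delta$ on $X$ is $[|\rho|^2,\infty)$, the shifted operator $-\Delta-|\rho|^2$ is nonnegative self-adjoint, so its square root is well defined and the spectral theorem yields
\[
\|(-\Delta-|\rho|^2)^{1/2}u\|_{L^2(X)}^2=\langle(-\Delta-|\rho|^2)u,u\rangle=\langle-\Delta u,u\rangle-|\rho|^2\|u\|_{L^2(X)}^2,
\]
for every $u\in H^1(X)$. One can verify this alternatively on the Fourier side: by Plancherel both sides equal $|W|^{-1}\int|\lambda|^2|\widetilde{u}(\lambda,k)|^2|{\bf c}(\lambda)|^{-2}\,d\lambda\,dk$.

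For the first term on the right, I would invoke Green's identity on the complete Riemannian manifold $X$, first for $u\in C_c^\infty(X)$ and then extended to all $u\in H^1(X)$ by density, to obtain $\langle-\Delta u,u\rangle=\|\nabla u\|_{L^2(X)}^2$. Inserting this into the displayed identity and combining with the conclusion of Theorem \ref{thm-p-s-X} at $\sigma=2$ yields the corollary with $S_{n,p}=S_{n,2,p}$. There is no genuine obstacle in this argument; the only substantive point is the verification that $\sigma=2$ lies in the admissible range, which is taken care of by the elementary bound $|\Sigma_0^+|\geq l$.
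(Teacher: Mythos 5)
Your proof is correct and follows exactly the route the paper intends: specialize Theorem \ref{thm-p-s-X} to $\sigma=2$ (noting $2<\min\{l+2|\Sigma_0^+|,n\}$ since $|\Sigma_0^+|\geq l\geq 1$ and $n\geq 3$) and identify $\|(-\Delta-|\rho|^2)^{1/2}u\|_{L^2(X)}^2$ with $\|\nabla u\|_{L^2(X)}^2-|\rho|^2\|u\|_{L^2(X)}^2$ via the spectral theorem and Green's identity. The explicit verification that $\sigma=2$ lies in the admissible range is a detail the paper leaves implicit, and you supply it correctly.
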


\noindent{\bf Acknowledgement:} 
The first author is supported by the Post Doctoral fellowship from IIT Bombay. The second author is
supported partially by SERB, MATRICS, MTR/2017/000235. The authors are thankful to  Swagato K Ray for numerous useful discussions and detailed comments. The authors are also grateful to Sundaram Thangavelu for valuable suggestions for the improvement of the paper.

\end{document}